\newcommand{\changeitem}{%
  \let\latexitem\item
  \renewcommand\item[1][]{\latexitem\relax{##1 } }%
}
\newcommand{\nocontentsline}[3]{}
\newcommand{\tocless}[2]{\bgroup\let\addcontentsline=\nocontentsline#1{#2}\egroup}
\newcommand{\codim}{\mbox{codim}}
\newcommand{\kernel}{\mbox{Ker}}
\newtheorem{thm}{Theorem}[section]
\newtheorem{cor}[thm]{Corollary}
\newtheorem{defn}[thm]{Definition}
\newtheorem{lemma}[thm]{Lemma}
\newtheorem{prop}[thm]{Proposition}
\theoremstyle{definition} \newtheorem{alg}[thm]{Algorithm}
\theoremstyle{remark} \newtheorem{ex}[thm]{Example}
\theoremstyle{remark} \newtheorem{remark}[thm]{Remark}
\theoremstyle{remark} \newtheorem{obs}[thm]{Observation}
\theoremstyle{remark} \newtheorem{rul}{Rule}
\theoremstyle{remark} 
\newenvironment{customthm}[1]
  {\innercustomthm}
  {\endinnercustomthm}
\newlist{steps}{enumerate}{1}
\setlist[steps]{label=\textit{Step \arabic*:},leftmargin=*}
\begin{document}

\title{Singularities of Restriction Varieties in $OG(k, n)$}
\author{SE{\c{C}}K{\.I}N ADALI}
\date{}
\thanks{During part of the preparation of this article the author was supported by the RCN Grant 250104/F20.}

\begin{abstract}
Restriction varieties in the orthogonal Grassmannian are subvarieties of $OG(k, n)$ defined by rank conditions given by a flag that is not necessarily isotropic with respect to the relevant symmetric bilinear form. In particular,  Schubert varieties of Type B and D are examples of restriction varieties. In this paper, we introduce a resolution of singularities for restriction varieties in $OG(k, n)$, and give a description of their singular locus by studying components of the exceptional locus of the resolution.
\end{abstract}

\maketitle
\tableofcontents

\section{Introduction}

In this paper, we present a resolution of singularities $\pi$ for restriction varieties in $OG(k, n)$. We also give a method for the description of the singular locus, and show that it is equal to the image of the exceptional locus of $\pi$ in most cases.
\medskip

Let $Q$ be a non-degenerate symmetric bilinear form on a vector space $W$ of dimension $n$ over the complex numbers. Let  $k_1<\cdots<k_h$ be positive integers such that $2k_h \leq n$. Let $F(k_1, \ldots, k_h; n)$ be the ordinary flag variety, and let $OF(k_1, \ldots, k_h; n)$ be the orthogonal partial flag variety parameterizing  subspaces
\[ W_1 \subseteq \cdots \subseteq W_h \]
of $W$ isotropic with respect to $Q$, where $W_i$ has dimension $k_i$. A restriction variety is the intersection of $OF(k_1, \ldots, k_h; n)$ with a Schubert variety in $F(k_1, \ldots, k_h; n)$ defined by a flag satisfying certain tangency conditions with respect to $Q$. Orthogonal Schubert varieties are examples of restriction varieties when the flag is isotropic.
\medskip

Restriction varieties have found applications in the restriction problem in cohomology. The inclusion $i : OF(k_1, \ldots, k_h; n) \hookrightarrow F(k_1, \ldots, k_h; n)$ induces $i^\ast : H^\ast(F(k_1, \ldots, k_h; n)) \to H^\ast(OF(k_1, \ldots, k_h; n))$, and given a Schubert class $\tau$ in $H^\ast(F(k_1, \ldots, k_h; n))$, we would like to express $i^\ast(\tau)$ as a non-negative linear combination of the Schubert classes in $H^\ast(OF(k_1, \ldots, k_h; n))$. The rule used to compute the cohomology class of a restriction variety solves this problem \cite{coskun1}. Similarly, \emph{symplectic restriction varieties} can be used to solve the same problem for the inclusion $i : SF(k_1, \ldots, k_h; n) \hookrightarrow F(k_1, \ldots, k_h; n)$, see \cite{coskun2, coskun3}. There are also applications to the rigidity problem. Restriction varieties give explicit deformations of Schubert varieties in certain instances, and hence show that the corresponding classes are not rigid. This paper studies the singularities of restriction varieties in $OG(k, n)$. We introduce a resolution of singularities, and study its exceptional locus. This resolution is inspired by the Bott-Samelson/Zelevinsky resolution for Schubert varieties, but has a more intricate construction reflecting the richer geometry of restriction varieties. We also describe the singular locus explicitly, and give a criterion for when it is equal to the image of the exceptional locus of $\pi$.
\medskip

Let $F_Q$ be the quadratic polynomial associated to $Q$. A $k$-plane $\Lambda$ is isotropic with respect to $Q$ if and only if its projectivization is contained in the quadric hypersurface defined by $F_Q$. The \emph{orthogonal Grassmannian} $OG(k, n)$ parameterizes $k$-dimensional subspaces of $W$ that are isotropic with respect to $Q$. Equivalently, this is the Fano variety of $(k-1)$-planes contained in a quadric hypersurface in $\mathbb{P}W$.
\medskip

Restriction varieties in the orthogonal Grassmannian are subvarieties of $OG(k, n)$ that parameterize isotropic subspaces of $W$ with respect to a flag that is not necessarily isotropic. Let $Q^{r}_{d}$ be a quadratic form of corank $r$ obtained by restricting $Q$ to a vector space of dimension $d$. Let $L_e$ denote an $e$-dimensional subspace that is isotropic with respect to $Q$. A restriction variety $V$ in $OG(k, n)$ is defined in terms of a sequence
\[ L_{n_1} \subseteq \ldots \subseteq L_{n_s} \subseteq Q^{r_{k-s}}_{d_{k-s}} \subseteq \ldots \subseteq Q^{r_1}_{d_1} \; . \]

$V$ parameterizes $k$-dimensional isotropic linear spaces that intersect $L_{n_j}$ in a subspace of dimension $j$ for all $1 \leq j \leq s$ and $Q^{r_i}_{d_i}$ in a subspace of dimension $k-i+1$ for all $1 \leq i \leq k-s$. There are two important conditions we impose on these sequences: The first is that we want the isotropic linear spaces $L_{n_j}$ and the singular loci of sub-quadrics $Q^{r_i, sing}_{d_i}$ to be in the most special position. This is ensured by the conditions
\begin{itemize}
\item $Q^{r_{i-1}, sing}_{d_{i-1}} \subseteq Q^{r_i, sing}_{d_i}$ and
\item $ \dim\left(L_{n_j} \cap Q^{r_i, sing}_{d_i}\right)= \min(n_j, r_i)$ for every $1 \leq j \leq s$ and $1 \leq i \leq k-s$.
\end{itemize}
\smallskip

In accordance with this positioning we require the $k$-planes parameterized by $V$ to intersect $Q^{r_i}_{d_i}$ in a certain way. Let $x_i$ be the number of isotropic linear spaces $L_{n_j}$ of the sequence contained in $Q^{r_i, sing}_{d_i}$. We require the $(k-i+1)$-dimensional subspace contained in $Q^{r_i}_{d_i}$ to intersect $Q^{r_i, sing}_{d_i}$ in a subspace of dimension $x_i$.

Secondly, we require the sub-quadrics to be irreducible. This is reflected in the condition
\begin{itemize}
\item $r_{k-s} \leq d_{k-s} -3$.
\end{itemize}
\smallskip

Informally we can think of restriction varieties as subvarieties of $OG(k, n)$ that interpolate between Schubert varieties in $OG(k, n)$, which is associated to maximal tangency conditions for the linear sections of the quadric hypersurface $Q$, and restrictions of general Schubert varieties in $G(k, n)$ to $OG(k, n)$ which is associated to minimal tangency conditions.
\medskip

The main results of this paper are the following:
\smallskip

\theoremstyle{plain} \newtheorem{res1}{$\quad$}
\begin{res1}
Theorem \ref{resthm} gives a resolution of singularities $\pi$ for restriction varieties.
\end{res1}

The resolution of singularities we introduce is inspired by the Bott-Samelson/Zelevinsky resolution for Schubert varieties. In order to resolve singularities, we construct a resolution that makes use of maximal dimensional isotropic linear subspaces at each step of the sequence. The resolution is constructed using a tower of Grassmannian and orthogonal Grassmannian bundles. We show that images of the components of the exceptional  locus of $\pi$ which have codimenson larger than 1 are in the singular locus. We study the tangent space of a restriction variety at a point for the images of the remaining components, and get a complete description of the singular locus.

\newtheorem{res2}[res1]{$\quad$}
\begin{res2}
Corollary \ref{thecor} describes the singular locus of a restriction variety in $OG(k, n)$.
\end{res2}

We give a method for finding the singular locus of a restriction variety in $OG(k, n)$ that is based on our study of the exceptional locus of $\pi$. In particular, this method presents an alternative to the method of describing the singular locus of a Schubert variety of Type B or D by checking for smoothness at each orbit.

\newtheorem{res3}[res1]{$\quad$}
\begin{res3}
Given a restriction variety $V$ in $OG(k, n)$, Algorithm \ref{alg} gives the singular locus of $V$.
\end{res3}

The organization of this paper is as follows: In Section 2, we review the well-known results on the singularities of Schubert varieties in $G(k, n)$. In Section 3, we review the necessary background and the definition of restriction varieties. In Section 4, we define the resolution of singularities, and study its exceptional locus. In Section 5, we present the algorithm for the singular locus and conclude with some examples.
\bigskip

\section{Singularities of Schubert Varieties in $G(k, n)$}

In this section, we introduce a language for Schubert varieties in the Grassmannian that will generalize to restriction varieties in a straight-forward way. This section not only serves as a reminder of some classical results on Schubert varieties, but also underlines some ideas used in the following sections. We refer the reader to \cite{billeylakshmibai} for an extensive exposition on the singularities of Schubert varieties.
\medskip

In our definition, we use sequences whose steps correspond to rank conditions giving the Schubert variety. Let $W$ be an $n$-dimensional vector space over the complex numbers, and consider $G(k, W)=G(k, n)$, the Grassmannian of $k$-planes on $W$. We define a Schubert variety $\Sigma$ in $G(k, n)$ in terms of a fixed complete flag, that is, a nested sequence of subspaces
\[ 0 \subseteq W_1 \subseteq \cdots \subseteq W_{n-1} \subseteq W_n = W \]
with $\dim W_i = i$. Consider a subsequence $W_\bullet$ of length $k$:
\[ W_{n_1} \subseteq \cdots \subseteq W_{n_k} \; .\]
The Schubert variety $\Sigma$ associated to $W_\bullet$ is defined as the closure of the locus
\[ \Sigma(W_\bullet)^0 = \left\{  \Lambda \in G(k, n) \; \middle| \; \dim \left( \Lambda \cap W_{n_i} \right) = i \; \mbox{for all} \; 1 \leq i \leq k \right\} . \]
If there are steps in $W_\bullet$ with consecutively increasing dimensions, certain conditions are implied by the others, and the number of rank conditions necessary to define the Schubert variety is less than the number of steps in the sequence. In order to define Schubert varieties in a concise way, we introduce \textit{partitions}.

\begin{defn}
Given a sequence of increasing positive integers $n_1, \ldots, n_k$, let $n_{a_1}, \ldots, n_{a_t}$ be the subsequence such that $n_{a_g}+1 \neq n_{a_g+1}$, and let
\[ \alpha_g= \Big| \left\{ \mbox{the indices $n_i$ that occur in} \; W_\bullet \; \big| \; n_i \leq n_{a_g}, \; a_g-i=n_{a_g}-n_i \right\} \Big| \; \; \mbox{for all} \; \; 1 \leq g \leq t \;. \]
Then the data $( n_{a_1}^{\alpha_1}, \ldots, n_{a_t}^{\alpha_t} )$ is defined to be the partition associated to the sequence $n_1, \ldots, n_k$.
\end{defn}

In other words, $n_{a_g}$ is the largest dimensional integer in each group of consecutive integers, and $\alpha_g$ counts the integers in that group. Note that we have $a_g=\sum_{i=1}^g\alpha_i$ and $a_t=k$. The Schubert variety $\Sigma$ in $G(k, n)$ associated to the partition $( n_{a_1}^{\alpha_1}, \ldots, n_{a_t}^{\alpha_t} )$ is defined as the closure of the locus
\[ \Sigma^0 = \left\{ \Lambda \in G(k, n) \; \middle| \; \dim\left( \Lambda \cap W_{n_{a_l}} \right)=a_l \;\; \mbox{for all} \;\; 1 \leq l \leq t \right\} \; . \]
Being homogeneous under the action of $GL(n)$, the open cell $\Sigma^0$ is smooth.

\begin{ex}
Let $\Sigma$ be the Schubert variety in $G(7, 17)$ given by the sequence
\[ W_2 \subseteq W_6 \subseteq W_7 \subseteq W_{11} \subseteq W_{12} \subseteq W_{13} \subseteq W_{15} \; . \]
This variety is defined as the closure of the locus
\begin{align*}
\Sigma^0 = \{ \Lambda \in G(7, 17) \; | \; & \dim ( \Lambda \cap W_2 ) = 1, \;\; \dim ( \Lambda \cap W_7 ) = 3, \\
& \dim ( \Lambda \cap W_{13} ) = 6, \;\; \dim ( \Lambda \cap W_{15} ) =7 \} \; .
\end{align*}
The partition associated to this Schubert variety is $( 2^1, 7^2, 13^3, 15^1 )$.
\end{ex}
\smallskip

The following proposition recalls the dimension of a Schubert variety using the sequence and the partition notations.
\begin{prop} \label{Sdim}
The dimension of a Schubert variety $\Sigma$ in $G(k, n)$ associated to the sequence $W_\bullet : \; W_{n_1} \subseteq \cdots \subseteq W_{n_k}$ or the partition $( n_{a_1}^{\alpha_1}, \ldots, n_{a_t}^{\alpha_t} )$ is given by
\[ \dim \Sigma = \sum_{i=1}^k (n_i-i) = \sum_{l=1}^t \alpha_l(n_{a_l}-a_l) \; .\]
\end{prop}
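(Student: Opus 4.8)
The plan is to establish the formula $\dim \Sigma = \sum_{i=1}^k (n_i - i)$ directly from the standard parametrization of the open cell $\Sigma^0$, and then to verify that this agrees with the partition-form expression $\sum_{l=1}^t \alpha_l(n_{a_l}-a_l)$ by a purely combinatorial rearrangement. Since $\Sigma$ is the closure of $\Sigma^0$ and closure does not change dimension, it suffices to compute $\dim \Sigma^0$.

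First I would analyze $\Sigma^0$ as a homogeneous space, or more concretely, I would exhibit an explicit coordinate chart. Fix a basis $e_1, \dots, e_n$ of $W$ adapted to the flag, so that $W_j = \langle e_1, \dots, e_j\rangle$. A general $\Lambda \in \Sigma^0$ has a unique basis $v_1, \dots, v_k$ in echelon form whose pivot in $v_i$ sits in position $n_i$ (this is forced by the rank conditions $\dim(\Lambda \cap W_{n_i}) = i$); the remaining entries of $v_i$ are free coordinates subject only to the condition that no later pivot row's pivot column appears, i.e. each $v_i$ has at most $n_i - i$ free entries (the $n_i$ positions below the pivot, minus the $i-1$ already-used pivot positions among the first $n_i$ coordinates, minus the pivot position itself — careful bookkeeping gives exactly $n_i - i$). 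I would check that these free parameters are unconstrained and that the resulting map to $\Sigma^0$ is an isomorphism onto an open dense subset, so $\dim \Sigma^0 = \sum_{i=1}^k (n_i - i)$. The main obstacle here is just getting the count of free entries per row exactly right: one must argue that the positions occupied by the earlier pivots $n_1, \dots, n_{i-1}$ all lie strictly below $n_i$ (they do, since the $n_j$ are strictly increasing) and hence each kills exactly one coordinate of $v_i$, giving $n_i - 1 - (i-1) = n_i - i$.

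Next I would prove the identity $\sum_{i=1}^k (n_i - i) = \sum_{l=1}^t \alpha_l(n_{a_l}-a_l)$. By the definition of the partition, the indices $1, \dots, k$ are grouped into consecutive blocks: block $l$ consists of indices $a_{l-1}+1, \dots, a_l$ (with $a_0 := 0$), and within block $l$ the corresponding subspace dimensions are the consecutive integers $n_{a_l} - \alpha_l + 1, \dots, n_{a_l}$. So for $i$ in block $l$, writing $i = a_l - m$ with $0 \le m \le \alpha_l - 1$, we have $n_i = n_{a_l} - m$, hence $n_i - i = (n_{a_l} - m) - (a_l - m) = n_{a_l} - a_l$, which is independent of $m$. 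Summing over the $\alpha_l$ values of $i$ in block $l$ gives $\alpha_l(n_{a_l} - a_l)$, and summing over $l$ gives the claim. This step is routine once the block structure from the definition is unpacked.

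I expect the first part — the dimension count for $\Sigma^0$ via the echelon-form chart — to be the only place requiring genuine care, and even there the argument is classical; the combinatorial identity is immediate from the definitions. An alternative for the first part, if one prefers to avoid explicit charts, is to invoke that $\Sigma^0$ is a single $B$-orbit (for $B$ the Borel stabilizing the flag) and compute its dimension from the associated inversion statistics, but the echelon-chart argument is more self-contained and fits the "language of sequences" emphasized in this section.
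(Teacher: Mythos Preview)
Your proof is correct. The paper does not actually prove this proposition: the sentence introducing it says ``The following proposition \emph{recalls} the dimension of a Schubert variety\ldots'' and no argument is given --- the result is treated as standard background. Your echelon-form chart for $\Sigma^0$ is the classical argument and yields $\sum_{i=1}^k(n_i-i)$ exactly as you describe; the block regrouping to obtain $\sum_{l=1}^t \alpha_l(n_{a_l}-a_l)$ is likewise correct and immediate from the paper's definition of the partition. So you have supplied a self-contained proof where the paper simply cites the fact.
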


\bigskip


Schubert varieties in the Grassmannian admit a natural resolution $\pi : \widetilde{\Sigma} \to \Sigma$ such that the image of the exceptional locus of $\pi$ is equal to the singular locus of $\Sigma$. Let $\Sigma$ be given by the partition $( n_{a_1}^{\alpha_1}, \ldots, n_{a_t}^{\alpha_t} )$ and let $\widetilde{\Sigma}$ be the Schubert variety in the flag variety $F(a_1, \ldots, a_t; n)$ defined by
\[ \widetilde{\Sigma} = \left\{ (T^1, \ldots, T^t) \in F(a_1, \ldots, a_t; n) \; \middle| \; T^l \subseteq W_{n_{a_l}} \;\; \mbox{for all} \;\; 1 \leq l \leq t \right\} \;. \]
Since $\widetilde{\Sigma}$ is an iterated tower of Grassmannians, it is smooth and irreducible. The natural projection ${\pi : F(a_1, \ldots, a_t; n) \to G(k, n)}$ given by $(T^1, \ldots, T^t) \mapsto T^t$ maps $\widetilde{\Sigma}$ onto $\Sigma$ and the map is injective over the smooth open cell $\Sigma^0$. The inverse image $\pi^{-1}(\Lambda)$ of a general point $\Lambda \in \Sigma^0$ is determined uniquely as
\[ T^l = \Lambda \cap W_{n_{a_l}}\; , \;\; 1 \leq l \leq t \; . \]
By Zariski's Main Theorem, $\pi$ is an isomorphism over $\Sigma^0$ and hence a resolution of singularities of $\Sigma$.
\smallskip

The map has positive dimensional fibers over the locus of $k$-planes $\Lambda$ with the property that $\dim ( \Lambda \cap W_{n_{a_l}} ) > a_l$ for some $1 \leq l \leq t-1$. Let $\Sigma_{s_l}$ be the closure of the locus
\[ \Sigma_{s_l}^0= \left\{ \Lambda \in G(k, n) \; \middle| \; \dim \left( \Lambda \cap W_{n_{a_l}} \right) = a_l +1, \; \dim \left( \Lambda \cap W_{n_{a_g}} \right) = a_g \; \mbox{for all} \; 1 \leq g \leq t, \; i \neq l \right\} \; \]
for some $1 \leq l \leq t-1$.
\smallskip

The exceptional locus of $\pi$ consists of the union of the inverse images of $\Sigma_{s_l}$ for all $1 \leq l \leq t-1$. Let us study the codimension of the components of the exceptional locus of $\pi$. Over each $\Sigma_{s_l}$, the inverse image $\Sigma_{s_l}$ is irreducible of codimension
\[ \codim \left( \pi^{-1}(\Sigma_{s_l}) \right) = \codim \left( \Sigma_{s_l} \right) - \dim \left( \pi^{-1}(\Lambda) \right) \]
for a general $\Lambda \in \Sigma_{s_l}$. By Proposition \ref{Sdim} we have
\begin{align*}
\codim \left( \Sigma_{s_l} \right) & =  \alpha_l (n_{a_l}-a_l) + \alpha_{l+1}(n_{a_{l+1}}-a_{l+1}) \\
& \quad\quad - (\alpha_l +1)(n_{a_l}-a_l-1) - (\alpha_{l+1} -1)(n_{a_{l+1}}-a_{l+1}) \\
& = n_{a_{l+1}} - n_{a_l} - (a_{l+1} - a_l) + \alpha_l +1 \; .
\end{align*}
On the other hand, for a general $\Lambda \in \Sigma_{s_l}$ we have
\begin{align*}
\pi^{-1}(\Lambda) = \{ (T^1, \ldots, T^t) \; | \; &  T^g=\Lambda \cap W_{n_{a_g}} \;\; \mbox{for all} \;\; 1 \leq g \leq t, \;\; g \neq l  \\
& \mbox{and} \;\; T^{l-1} \subseteq T^l \subseteq \Lambda \cap W_{n_{a_l}} \}
\end{align*}
So, for an element of $\pi^{-1}(\Lambda)$, the coordinate $T^l$ is the only one that is not determined uniquely and it can be parameterized by $G(a_l-a_{l-1}, a_l+1-a_{l-1})$. This Grassmannian has dimension $a_l-a_{l-1}=\alpha_l$. Therefore we have
\[ \codim \left( \pi^{-1}(\Sigma_{s_l}) \right) = n_{a_{l+1}} - n_{a_l} - (a_{l+1} - a_l) +1 \geq 2  \]
since $n_{a_{l+1}} - n_{a_l} \geq a_{l+1} - a_l +1$.
\smallskip

This shows that a component of the exceptional locus of $\pi$ has codimension larger than 1. This observation with the following lemma determines the singular locus of a Schubert variety.

\begin{lemma} \label{coslem}
(\cite{coskun4}, Lemma 2.3)
Let $f : X \to Y$ be a birational morphism from a smooth, projective variety $X$ onto a normal projective variety $Y$ . Assume that $f$ is an isomorphism in codimension one. Then $p \in Y$ is a singular point if and only if $f^{-1}(p)$ is positive dimensional.
\end{lemma}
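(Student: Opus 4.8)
\emph{Proof strategy.} I would work locally on $Y$ and prove the two implications separately, in contrapositive form; in each case the goal is to show that $f$ is an isomorphism over a neighbourhood of $p$.

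Suppose first that $f^{-1}(p)$ is finite (i.e.\ not positive-dimensional). By upper semicontinuity of fibre dimension, the set $\{x \in X : \dim_x f^{-1}(f(x)) \geq 1\}$ is closed in $X$; its image under the proper morphism $f$ is closed in $Y$ and misses $p$, so $f$ is quasi-finite, hence finite, over some open neighbourhood $V$ of $p$. A finite birational morphism onto a normal variety is an isomorphism, so $f^{-1}(V) \to V$ is an isomorphism; since $f^{-1}(V)$ is open in the smooth variety $X$ it is smooth, hence so is $V$, so $p$ is a smooth point. This gives the implication "$p$ singular $\Rightarrow$ $f^{-1}(p)$ positive-dimensional".

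For the reverse implication I would argue by contradiction: assume $p$ is a smooth point while $f^{-1}(p)$ has positive dimension. After shrinking $Y$ to a smooth open neighbourhood $V$ of $p$, write $\widetilde{V} = f^{-1}(V)$ and $g = f|_{\widetilde{V}} : \widetilde{V} \to V$, a proper birational morphism of smooth varieties with $g^{-1}(p) = f^{-1}(p)$ positive-dimensional. Pick a closed point $x$ on a positive-dimensional component of $g^{-1}(p)$. Then $dg_x : T_x \widetilde{V} \to T_p V$ has nonzero kernel, since the kernel contains the Zariski tangent space of the fibre at $x$; hence $\det(dg_x) = 0$. Because $V$ is smooth, $\det(dg)$ is a section of the line bundle $(\det T_{\widetilde{V}})^{-1} \otimes g^{\ast}\det T_V$, so its zero scheme $R$ is an effective Cartier divisor; it is proper in $\widetilde{V}$ since $g$ is an isomorphism over a dense open set, and nonempty since $x \in R$. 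Let $R_0$ be a component of $R$ through $x$, with generic point $\eta$. If $\overline{g(R_0)}$ were again a divisor in $V$, then $g$ would induce an injective local homomorphism $\mathcal{O}_{V, g(\eta)} \to \mathcal{O}_{\widetilde{V}, \eta}$ of discrete valuation rings with a common fraction field, which is forced to be an isomorphism; then $g$ would be a local isomorphism at $\eta$ and $\det(dg)$ would be invertible there, contradicting $\eta \in R$. So $g$ contracts the divisor $R_0$ --- but the hypothesis that $f$ is an isomorphism in codimension one says exactly that $f$, hence $g$, contracts no divisor of $X$ (the locus where $f$ fails to be a local isomorphism has codimension $\geq 2$). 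This contradiction shows $f^{-1}(p)$ cannot be positive-dimensional when $p$ is smooth, which is the remaining implication.

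The routine ingredients here are upper semicontinuity of fibre dimension, the fact that a finite birational morphism onto a normal variety is an isomorphism, and the comparison of discrete valuation rings at codimension-one points. The one genuinely substantive step is the middle of the third paragraph: a positive-dimensional fibre over a smooth point forces a contracted divisor. One could instead simply quote "purity of the exceptional locus" (the exceptional locus of a proper birational morphism from a normal variety to a regular one is empty or pure of codimension one) and conclude at once; I would give the hands-on ramification-divisor argument because it is self-contained and pins down exactly where smoothness of $Y$ at $p$, rather than mere normality, is used.
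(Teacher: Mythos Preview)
Your proof is correct. Note, however, that the paper does not actually prove this lemma: it is quoted verbatim from \cite{coskun4}, Lemma 2.3, with no argument given, so there is no ``paper's own proof'' to compare against. Your write-up therefore goes well beyond what the paper does, supplying a complete self-contained argument where the paper is content to cite.

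A few small remarks on your argument. The first implication is clean and standard. In the second, your ramification-divisor approach is correct and is essentially the classical proof of van der Waerden's purity theorem / purity of the branch locus specialized to this setting; the key points --- that the Jacobian ideal cuts out an effective Cartier divisor because both source and target are smooth, and that a dominant map of DVRs with common fraction field is an isomorphism --- are exactly the right ones to isolate. One could shorten the second half by invoking purity of the exceptional locus directly (for a proper birational morphism from a smooth variety to a regular one, the exceptional locus is either empty or pure of codimension one), as you yourself note; either presentation is fine. The only place to be slightly careful is the phrase ``$f$ is an isomorphism in codimension one'': make sure you are reading it as ``the locus in $X$ where $f$ fails to be a local isomorphism has codimension $\geq 2$'', which is how you use it, and which is the intended meaning here.
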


\begin{cor}
The image of the exceptional locus of the resolution of singularities $\pi : \widetilde{\Sigma} \to \Sigma$ is equal to the singular locus of $\Sigma$.
\end{cor}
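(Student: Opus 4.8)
The corollary follows immediately by combining the codimension computation carried out just above with Lemma~\ref{coslem}. The plan is to verify that the resolution $\pi : \widetilde{\Sigma} \to \Sigma$ satisfies all the hypotheses of Lemma~\ref{coslem}, and then read off the conclusion.

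First I would check the hypotheses one by one. The source $\widetilde{\Sigma}$ is an iterated tower of Grassmannian bundles, hence smooth, irreducible, and projective (being closed in the projective variety $F(a_1,\ldots,a_t;n)$). The target $\Sigma$ is a Schubert variety in $G(k,n)$, which is normal and projective; normality of Schubert varieties in the Grassmannian is classical and can be quoted from \cite{billeylakshmibai}. The map $\pi$ is a morphism (restriction of the projection $F(a_1,\ldots,a_t;n)\to G(k,n)$), it is surjective onto $\Sigma$, and by the discussion preceding the corollary it is an isomorphism over the open cell $\Sigma^0$ — so it is birational — the key input being Zariski's Main Theorem together with the fact that $\pi^{-1}(\Lambda)$ is the single point $(T^1,\ldots,T^t)$ with $T^l = \Lambda \cap W_{n_{a_l}}$ for $\Lambda \in \Sigma^0$.

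Next I would argue that $\pi$ is an isomorphism in codimension one. The locus in $\Sigma$ over which $\pi$ has positive-dimensional fibers is contained in the union $\bigcup_{l=1}^{t-1}\Sigma_{s_l}$ of the subvarieties defined above, since on the complement of this union every $k$-plane $\Lambda$ satisfies $\dim(\Lambda \cap W_{n_{a_l}}) = a_l$ for all $l$, forcing the fiber to be the single point described above. The codimension computation already performed shows
\[ \codim\left( \pi^{-1}(\Sigma_{s_l}) \right) = n_{a_{l+1}} - n_{a_l} - (a_{l+1} - a_l) + 1 \geq 2 \]
for each $l$, using $n_{a_{l+1}} - n_{a_l} \geq a_{l+1} - a_l + 1$ (which holds because $n_{a_l}$ is the top of a block of consecutive integers, so $n_{a_l}+1 \neq n_{a_l+1}$). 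In particular the exceptional locus of $\pi$ in $\widetilde{\Sigma}$ has codimension at least $2$, so $\pi$ restricted to the complement of a codimension-two set is an isomorphism onto its image; equivalently $\pi$ is an isomorphism in codimension one. With all hypotheses of Lemma~\ref{coslem} verified, the lemma gives that $p \in \Sigma$ is singular if and only if $\pi^{-1}(p)$ is positive-dimensional, i.e.\ if and only if $p$ lies in the image of the exceptional locus of $\pi$; this is exactly the assertion of the corollary.

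I do not anticipate a serious obstacle here: the content is entirely in the already-completed codimension estimate and in Lemma~\ref{coslem}, and the only point requiring a citation rather than a one-line argument is the normality of Schubert varieties in the Grassmannian. If one wanted to avoid quoting normality, one could instead invoke the standard fact that Schubert varieties are Cohen--Macaulay and the Bott--Samelson-type resolution together with $\pi_*\mathcal{O}_{\widetilde{\Sigma}} = \mathcal{O}_\Sigma$ forces normality; but quoting \cite{billeylakshmibai} is cleaner and suffices.
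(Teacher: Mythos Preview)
Your proposal is correct and follows exactly the approach the paper intends: the corollary is presented in the paper without a separate proof because it is meant to be read off directly from the codimension estimate $\codim(\pi^{-1}(\Sigma_{s_l}))\geq 2$ together with Lemma~\ref{coslem}, and you have accurately spelled out the verification of that lemma's hypotheses (smoothness and projectivity of $\widetilde{\Sigma}$, normality of Schubert varieties, birationality of $\pi$, and the isomorphism-in-codimension-one condition).
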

\smallskip

\begin{ex}
For the Schubert variety given by the partition $( 2^1, 7^2, 13^3, 15^1 )$, the variety $\widetilde{\Sigma}$ is defined as
\begin{align*}
\widetilde{\Sigma} = \{ (T^1, T^2, T^3, T^4) \in F(1, 3, 6, 7; 17) \;\; | \;\; & T^1 \subseteq W_2, \;\; T^2 \subseteq W_7, \\
& T^3 \subseteq W_{13}, \;\; T^4 \subseteq W_{15} \} \; .
\end{align*}
The projection $\pi : (T^1, T^2, T^3, T^4) \mapsto T^4$ maps $\widetilde{\Sigma}$ onto $\Sigma$. The exceptional locus consists of the union of the inverse images of the closures of the following loci:
\begin{align*}
\Sigma^0_{s_1} = \{ \Lambda \in G(7, 17) \; | \; & \dim ( \Lambda \cap W_2 ) = 2, \;\; \dim ( \Lambda \cap W_7 ) = 3, \\
& \dim ( \Lambda \cap W_{13} ) = 6, \;\; \dim ( \Lambda \cap W_{15} ) =7 \} \; .
\end{align*}
\begin{align*}
\Sigma^0_{s_2} = \{ \Lambda \in G(7, 17) \; | \; & \dim ( \Lambda \cap W_2 ) = 1, \;\; \dim ( \Lambda \cap W_7 ) = 4, \\
& \dim ( \Lambda \cap W_{13} ) = 6, \;\; \dim ( \Lambda \cap W_{15} ) =7 \} \; .
\end{align*}
\[ \Sigma^0_{s_3} = \{ \Lambda \in G(7, 17) \; | \; \dim ( \Lambda \cap W_2 ) = 1, \;\; \dim ( \Lambda \cap W_7 ) = 3, \;\; \dim ( \Lambda \cap W_{13} ) = 7 \} \; . \]
Consequently the singular locus of the Schubert variety $\Sigma$ is given by
\[ \Sigma^{sing} = \Sigma_{s_1} \cup \Sigma_{s_2} \cup \Sigma_{s_3} \; . \]
\end{ex}
\smallskip

\begin{remark}
The subvarieties $\Sigma_{s_l}$ of the Schubert variety $\Sigma$  correspond to the hooks in the Young diagram of $\Sigma$.
\end{remark}

\bigskip

\section{Preliminaries on Restriction Varieties}

In this section, we review the definition of restriction varieties and their basic properties. Restriction varieties in $OG(k, n)$ parameterize isotropic $k$-planes that intersect elements of a fixed flag in specified dimensions. The flag does not need to be isotropic but there are some conditions imposed by basic facts about quadrics. Additionally, there are some conditions we impose to ensure that the isotropic linear spaces and the singular loci of quadrics are in the most special position. We review these conditions, and refer the reader to \cite{coskun1} for a detailed discussion.
\smallskip

Let $W$ be an $n$-dimensional vector space and let $Q$ a non-degenerate symmetric bilinear form on $W$. We recall the following basic facts about quadrics:

\begin{itemize} \label{qproperties}
\item \textbf{The corank bound.} Let $Q^{r_2}_{d_2} \subseteq Q^{r_1}_{d_1}$ be two linear sections of $Q$ such that the singular locus of $Q^{r_1}_{d_1}$ is contained in the singular locus of $Q^{r_2}_{d_2}$. Then $d_2+r_2 \leq d_1 +r_1$.
\item \textbf{The linear space bound.} The largest dimensional isotropic linear space with respect to a quadratic form $Q^r_d$ has dimension $\left\lfloor  \frac{d+r}{2}  \right\rfloor$. A linear space of dimension $j$ intersects the singular locus of $Q^r_d$ in a subspace of dimension at least $\max \left( 0, j- \left\lfloor  \frac{d-r}{2}  \right\rfloor \right)$.
\item \textbf{Irreducibility.} A sub-quadric $Q^{d-2}_d$ of $Q$ is reducible and equal to the union of two linear spaces of (vector space) dimension $d-1$ meeting along a linear space of dimension $d-2$. If $n=2k$, then the linear spaces constituting $Q^{k+1}_{k-1}$ belong to two distinct connected componens.
\item \textbf{The variation of tangent spaces.} Let a quadric $Q^r_d$ be singular along a codimension $j$ linear subspace $M$ of a linear space $L$. Then the image of the Gauss map of $Q^r_d$ restricted to the smooth points of $L$ has dimension at most $j-1$. In other words, the tangent spaces to $Q^r_d$ along the smooth points of $L$ vary at most in a $(j-1)$-dimensional family.
\end{itemize}

Let $F_Q$ denote the quadratic polynomial associated to $Q$. Let $L_{n_j}$ be an isotropic linear space of vector space dimension $n_j$. If $2n_j=n$, we denote isotropic linear spaces in different connected components as $L_{n_j}$ and $L_{n_j}^\prime$. Let $Q^{r_i}_{d_i}$ denote a sub-quadric of corank $r_i$ cut out by a $d_i$-dimensional linear section of $Q$ and denote this linear space by $\overline{Q^{r_i}_{d_i}}$. Let $F_{Q^r_d}$ denote the restriction of $F$ to $\overline{Q^{r_i}_{d_i}}$ so that $Q^r_d$ is given by the zero locus of $F_{Q^r_d}$. We denote the singular locus of $Q^{r_i}_{d_i}$ by $Q^{r_i, sing}_{d_i}$. We use the same notation for projectivizations contained in $\mathbb{P}W$. For convenience let $r_0=0$ and $d_0=n$. 
\smallskip

We use sequences of the form
\[ L_{n_1} \subseteq \ldots \subseteq L_{n_s} \subseteq Q^{r_{k-s}}_{d_{k-s}} \subseteq \ldots \subseteq Q^{r_1}_{d_1} \]
consisting of isotropic linear spaces $L_{n_j}$ and sub-quadrics $Q^{r_i}_{d_i}$ of $Q$ to define restriction varieties. The restriction variety $V$ defined via this sequence parameterizes $k$-dimensional isotropic linear spaces that intersect $L_{n_j}$ in a subspace of dimension $j$ for all $1 \leq j \leq s$ and $Q^{r_i}_{d_i}$ in a subspace of dimension $k-i+1$ for all $1 \leq i \leq k-s$. Let $x_i$ be the number of isotropic linear spaces $L_{n_j}$ of the sequence contained in $Q^{r_i, sing}_{d_i}$. We require the $(k-i+1)$-dimensional subspace $\Lambda \cap Q^{r_i}_{d_i}$ of a $k$-plane $\Lambda$ contained in $Q^{r_i}_{d_i}$ to intersect $Q^{r_i, sing}_{d_i}$ in a subspace of dimension $x_i$.

\begin{defn} \label{admissible}
A sequence of linear spaces and quadrics $(L_\bullet, Q_\bullet)$ associated to $OG(k, n)$ is an \textit{admissible sequence} if the following conditions are satisfied.
\begin{enumerate}
\item $2n_s \leq d_{k-s}+r_{k-s}$.
\item $2(k-i+1) \leq r_i +d_i$ for every $1 \leq i \leq k-s$.
\item $r_{i+1} + d_{i+1} \leq r_i + d_i \leq n$ for every $1 \leq i \leq k-s$.
\item $Q^{r_{i-1}, sing}_{d_{i-1}} \subseteq Q^{r_i, sing}_{d_i}$ for every $1 \leq i \leq k-s$.
\item $ \dim\left(L_{n_j} \cap Q^{r_i, sing}_{d_i}\right)= \min(n_j, r_i)$ for every $1 \leq j \leq s$ and $1 \leq i \leq k-s$.
\item For every $1 \leq i \leq k-s$ either $r_i=r_1=x_1$ or $r_l -r_i \geq l-i-1$ for every $l>i$. Furthermore, if $r_l=r_{l-1}>x_1$ for some $l$, then $d_i-d_{i+1}=r_{i+1}-r_i$ for all $i \geq l$ and $d_{l-1}-d_l=1$.
\item $r_{k-s} \leq d_{k-s} -3$.
\item For every $1 \leq i \leq k-s$,
\[ x_i \geq k-i+1 - \frac{d_i - r_i}{2} . \]
\item For any $1 \leq j \leq s$, there does not exist $1 \leq i \leq k-s$ such that $n_j-r_i=1$.
\end{enumerate}
\end{defn}

\smallskip

\begin{remark}
Conditions (1), (2) and (3) follow from the corank bound. In conditions (4) and (5), we require the isotropic linear spaces and the singular loci of sub-quadrics to be in the most special position. This gives a motivation for counting the sub-quadrics $Q^{r_i}_{d_i}$ from the right; the singular loci form a nested sequence of subspaces $Q^{r_1, sing}_{d_1} \subseteq \ldots \subseteq Q^{r_{k-s}, sing}_{d_{k-s}}$. Condition (6) is a technical condition that puts a restriction on the singular loci of the sub-quadrics in the sequence; it disallows a sudden gap between $Q^{r_i, sing}_{d_i}$. Condition (7) reflects the irreducibility property of quadrics. Condition (8) is a result of the linear space bound and the special positioning of the isotropic linear spaces and the singular loci of sub-quadrics. Finally, condition (9) follows from the variation of tangent spaces property of quadrics.
\end{remark}

\smallskip

\begin{defn} \label{restdef}
Let $(L_\bullet, Q_\bullet)$ be an admissible sequence for $OG(k, n)$. A restriction variety $V(L_\bullet, Q_\bullet)$ is the subvariety of $OG(k, n)$ defined as the closure of
\begin{align*}
V^0 \left( L_\bullet, Q_\bullet \right) = \big\{ \; \Lambda \in OG(k, n) \; \; \big| \; \; & \dim\left(\Lambda \cap L_{n_j}\right) =j, \; \; 1 \leq j \leq s, \\
& \dim\left(\Lambda \cap Q^{r_i}_{d_i}\right)=k-i+1, \; \; \dim\left(\Lambda \cap Q^{r_i, sing}_{d_i}\right)=x_i, \; \; 1 \leq i \leq k-s \; \big\} .
\end{align*}
\end{defn}

\smallskip

\begin{ex}
Schubert varieties in $OG(k, n)$ are restriction varieties defined via a sequence satisfying \linebreak $d_i+r_i=n$ for all $1 \leq i \leq k-s$, that is, when the quadrics in the sequence are as singular as possible. The restriction of a general Schubert variety in $G(k, n)$ to $OG(k, n)$ is also a restriction variety associated to a sequence with $s=0$ and $r_i=0$ for all $1 \leq i \leq k-s$. Hence, restriction varieties interpolate between the restrictions of Schubert varieties in $G(k, n)$ to $OG(k, n)$ and Schubert varieties in $OG(k, n)$.
\end{ex}

\smallskip

When the inequality $x_i \geq k-i+1 - \frac{d_i - r_i}{2}$ is an equality for an index $i$, then the $\frac{d_i+r_i}{2}$-dimensional linear spaces in $Q^{r_i}_{d_i}$ form two irreducible components.
\smallskip
\begin{ex} $V$ defined by
\[ Q^0_3 \subseteq Q^0_4 \]
in $OG(2, 5)$ parameterizes lines on a smooth quadric surface $Q^0_4$ in $\mathbb{P}^3$ and consists of two irreducible components. \end{ex}
\smallskip
The $(k-i+1)$-dimensional subspaces contained in $Q^{r_i}_{d_i}$ may be distinguished by their parity of the dimension of their intersection with linear spaces in each of these components.
\smallskip

\begin{defn} \label{def3}
Let $(L_\bullet, Q_\bullet)$ be an admissible sequence. An index $1 \leq i \leq k-s$ such that
\[ x_i = k-i+1 - \frac{d_i - r_i}{2} \]
is called a special index. For each special index, a marking $m_\bullet$ of $(L_\bullet, Q_\bullet)$ designates one of the irreducible components of $\frac{d_i + r_i}{2}$-dimensional linear spaces of $Q^{r_i}_{d_i}$ as even and the other one as odd, such that
\begin{itemize}
\item If $d_{i_1} + r_{i_1} = d_{i_2}+r_{i_2}$ for two special indices $i_1 < i_2$ and the component containing a linear space $\Gamma$ is designated even for $i_2$, then the component containing $\Gamma$ is designated even for $i_1$ as well; and
\item If $2n_s = d_i+r_i$ for a special index $i$, then the component to which $L_{n_s}$ belongs is assigned the parity of $s$; and
\item If $n=2k$, $m_\bullet$ assigns the component containing $L_k$ the parity that characterizes the component $OG(k, 2k)$.
A marked restriction variety $V(L_\bullet, Q_\bullet, m_\bullet)$ is the Zariski closure of the subvariety of $V^0(L_\bullet, Q_\bullet)$ parameterizing $k$-dimensional isotropic subspaces $W$, where, for each special index $i$, $W$ intersects subspaces of dimension $\frac{d_i+r_i}{2}$ of $Q^{r_i}_{d_i}$ designated even (respectively, odd) by  $m_\bullet$ in a subspace of even (respectively, odd) dimension.
\end{itemize}
\end{defn}

\bigskip


Partitions can be used to define restriction varieties using only the conditions that are not automatically satisfied as a result of others. 

\begin{defn}
Given a restriction variety $V$ in $OG(k, n)$ defined by the admissible sequence
\[ L_{n_1} \subseteq \ldots \subseteq L_{n_s} \subseteq Q^{r_{k-s}}_{d_{k-s}} \subseteq \ldots \subseteq Q^{r_1}_{d_1} \; , \]
let $(n_{a_1}, \ldots, n_{a_t})$ be the partition for $n_1, \ldots, n_s$, and let $(d_{b_1}, \ldots, d_{b_u})$ be the partition for $d_{k-s}, \ldots, d_1$. Then the data 
\[  (n_{a_1}^{\alpha_1}, \ldots, n_{a_t}^{\alpha_t}), (d_{b_1}^{\beta_1}, \ldots, d_{b_u}^{\beta_u}), (r_1, \ldots, r_{k-s})  \]
is defined to be the partitions associated to $V$.
\end{defn}

\begin{remark} For a group of sub-quadrics whose dimensions are consecutive integers, the coranks are not necessarily consecutive. In other words, a partition for $r_1, \ldots, r_{k-s}$ may contain more than $u$ entries. However, by condition (6) in the Definition \ref{admissible}, the value of $x_i$ is fixed for each group of sub-quadrics with consecutive dimensions. \end{remark}

\begin{remark} We have $a_g=\sum_{l=1}^g\alpha_l$ and $k-b_h+1=s+\sum_{l=1}^h\beta_l$ for every $1\leq g \leq t$ and $1\leq h \leq u$. The restriction variety defined by the partitions $(n_{a_1}^{\alpha_1}, \ldots, n_{a_t}^{\alpha_t}), (d_{b_1}^{\beta_1}, \ldots, d_{b_u}^{\beta_u}), (r_1, \ldots, r_{k-s})$ parameterizes $k$-dimensional isotropic linear spaces that satisfy
\[ \dim\left(\Lambda \cap L_{n_{a_g}}\right)=a_g, \;\; \dim\left(\Lambda \cap Q^{r_{b_h}}_{d_{b_h}}\right)=k-b_h+1, \;\; \mbox{and} \;\; \dim\left(\Lambda \cap Q^{r_{b_h}, sing}_{d_{b_h}}\right)=x_{b_h} \; , \]
where $1 \leq g \leq t$ and $1 \leq h \leq u$. \end{remark}

\begin{ex} To the sequence $ L_2 \subseteq L_3 \subseteq L_6 \subseteq L_7 \subseteq L_8 \subseteq Q^{18}_{11} \subseteq Q^{17}_{12} \subseteq Q^{13}_{18} $ we associate the partitions $  (3^2, 8^3), (12^2, 18), (13, 17, 18). $ \end{ex}
\smallskip

We recall the definition of a restriction variety in the next proposition using both sequence and partition notations.

\begin{prop} \label{dim}
\emph{(\cite{coskun1}, Prop 4.16)} The marked restriction variety $V(L_\bullet, Q_\bullet, m_\bullet)$ associated to a marked admissible sequence is an irreducible variety of dimension
\begin{align*}
\dim \left( V(L_\bullet, Q_\bullet, m_\bullet) \right) 
& = \sum_{j=1}^s \left( n_j-j \right) + \sum_{i=1}^{k-s} \left( d_i +x_i -2(k-i+1) \right) \\
&= \sum_{g=1}^t \alpha_g \left( n_{a_g} - a_g \right) + \sum_{h=1}^u \beta_h\left(d_{b_h}+x_{b_h}-2(k-b_h+1) + \frac{\beta_h-1}{2}\right)
\end{align*}
\end{prop}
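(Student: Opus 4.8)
The plan is to establish the first displayed equality directly and then obtain the second from it by a purely formal summation using the definitions of the partitions $(n_{a_g}^{\alpha_g})$ and $(d_{b_h}^{\beta_h})$. For the first equality I would reconstruct a generic isotropic $k$-plane $\Lambda\in V^0(L_\bullet,Q_\bullet)$ term by term along the filtration it inherits from the flag, thereby exhibiting a dense open subset of $V^0$ as the total space of an iterated tower of projective-space bundles and quadric bundles; the dimension of $V$ is then the sum of the fiber dimensions of the tower, and irreducibility of $V$ falls out at the same time because every fiber is irreducible. (Equivalently this can be organised as an induction peeling off one step of the sequence at a time, with the same bookkeeping.)

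Concretely, a general $\Lambda\in V^0$ carries the filtration
\[ \Lambda\cap L_{n_1}\subseteq\cdots\subseteq\Lambda\cap L_{n_s}\subseteq\Lambda\cap Q^{r_{k-s}}_{d_{k-s}}\subseteq\cdots\subseteq\Lambda\cap Q^{r_1}_{d_1}=\Lambda , \]
whose successive dimensions are $1,2,\dots,k$. At the $j$-th linear step one chooses a line in $L_{n_j}$ transverse to the part already built, i.e.\ a point of $\mathbb{P}\big(L_{n_j}/(\Lambda\cap L_{n_{j-1}})\big)$, which contributes $n_j-j$; summing over $j$ reproduces the Grassmannian count $\sum_{j=1}^s(n_j-j)$ of Proposition \ref{Sdim}. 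At the $i$-th quadric step one has already built an isotropic $U\subseteq\overline{Q^{r_i}_{d_i}}$ with $\dim U=k-i$ and $U\cap Q^{r_i, sing}_{d_i}=\Lambda\cap Q^{r_i, sing}_{d_i}$ of dimension $x_i$, and must choose $w_i\in\overline{Q^{r_i}_{d_i}}$ lying on the quadric with $w_i\perp U$, so that $\langle U,w_i\rangle$ remains isotropic. Since $Q$ restricted to $\overline{Q^{r_i}_{d_i}}$ has radical $Q^{r_i, sing}_{d_i}$ of dimension $r_i$, the identity $\dim U^\perp=\dim\overline{Q^{r_i}_{d_i}}-\dim U+\dim\big(U\cap Q^{r_i, sing}_{d_i}\big)$ gives
\[ \dim\big(U^\perp\cap\overline{Q^{r_i}_{d_i}}\big)=d_i-(k-i)+x_i , \]
and by conditions (7) and (8) this strictly exceeds $r_i$; hence the quadratic form induced on $\mathbb{P}\big((U^\perp\cap\overline{Q^{r_i}_{d_i}})/U\big)$ — a projective space of dimension $d_i-2(k-i)+x_i-1$ — is not identically zero, so $w_i$ varies in a quadric hypersurface of dimension $d_i+x_i-2(k-i+1)$. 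Adding the fiber dimensions over all steps yields the first formula.

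I expect the main obstacle to be showing that a generic point of the tower genuinely lands in $V^0$ — that is, that the intersection dimensions at every stage are exactly, and not merely at least, the prescribed values. This is precisely where the admissibility conditions are used: the special-position conditions (4), (5), (6) force $U$ and the $L_{n_j}$ to meet the nested singular loci $Q^{r_i, sing}_{d_i}$ in the expected dimensions at each stage, while the variation-of-tangent-spaces condition (9) rules out the remaining degenerations. A secondary point is the marking: at a special index the fiber quadric degenerates and the $\tfrac{d_i+r_i}{2}$-dimensional isotropics of $Q^{r_i}_{d_i}$ split into two families, but invoking the two-component structure of the relevant orthogonal Grassmannian one checks that the parity requirement imposed by $m_\bullet$ selects a single irreducible component of the fiber, of the same dimension; so the formula is unaffected and the marked variety remains irreducible.

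Finally, the passage to the partition form is routine: over a block of consecutive indices $n_j$ with largest element $n_{a_g}$ one has $n_j-j=n_{a_g}-a_g$, so the block contributes $\alpha_g(n_{a_g}-a_g)$; and over a block of $\beta_h$ sub-quadrics with consecutive dimensions $d_i=d_{b_h}-(i-b_h)$ (and $x_i=x_{b_h}$ constant, by condition (6)) one computes
\[ \sum_{m=0}^{\beta_h-1}\Big(d_{b_h}+x_{b_h}-2(k-b_h+1)+m\Big)=\beta_h\Big(d_{b_h}+x_{b_h}-2(k-b_h+1)+\tfrac{\beta_h-1}{2}\Big) , \]
which gives the second formula.
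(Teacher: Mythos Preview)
The paper does not give its own proof of this proposition: it is quoted verbatim from \cite{coskun1} (Prop.~4.16) and stated without argument. So there is nothing to compare against directly, and your proposal is a correct self-contained proof of the result. Your step-by-step construction of a generic $\Lambda$ along the filtration, with the orthogonal-complement count $\dim(U^\perp\cap\overline{Q^{r_i}_{d_i}})=d_i-(k-i)+x_i$ at each quadric step, is exactly the standard argument; the verification that $\dim(U\cap Q^{r_i,\mathrm{sing}}_{d_i})=x_i$ (which you use implicitly) follows from the nesting of the flag together with condition~(5), and your treatment of the special indices and of the partition rewrite is accurate.

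It is worth noting that the paper does, in effect, recompute this dimension later: Proposition~\ref{prop2} builds the resolution $\widetilde{V}$ as a tower of ordinary and orthogonal Grassmannian bundles and sums the fiber dimensions row by row to obtain precisely the partition-form expression $\sum_g\alpha_g(n_{a_g}-a_g)+\sum_h\beta_h\big(d_{b_h}+x_{b_h}-2(k-b_h+1)+\tfrac{\beta_h-1}{2}\big)$. That argument differs from yours in that it introduces the auxiliary maximal isotropic spaces $O^{\bullet,\bullet}$ and $Z^\bullet$ and groups the counting by the blocks $\alpha_g,\beta_h$ from the outset, rather than peeling off one dimension of $\Lambda$ at a time; but the underlying bookkeeping is the same, and your more elementary approach has the advantage of not requiring the resolution machinery.
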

\smallskip

Note that this expression does not depend on the marking $m_\bullet$. The restriction variety $V(L_\bullet, Q_\bullet)$ has an irreducible component for every marking $m_\bullet$ and every irreducible component of $V(L_\bullet, Q_\bullet)$ has this dimension.

\begin{ex}
The restriction variety $\Big[ L_6 \subseteq L_7 \subseteq L_8 \Big]$ is the Grassmannian $G(3, 8)$ which parameterizes planes contained in a projective space of dimension 7. It is given by $(8^3), (), ()$ in terms of partitions, and has dimension $\alpha_1(n_{a_1}-a_1)=3(8-3)=15$.
\end{ex}
\smallskip

\begin{ex}
The restriction variety $\Big[ Q^4_{11} \subseteq Q^3_{12} \subseteq Q^2_{13} \Big]$ is the Fano variety of planes contained in a quadric 11-fold in $\mathbb{P}^{12}$ singular along a line. In terms of partitions this is given by $(), (13^3), (2, 3, 4)$ and has dimension $\beta_1(d_{b_1}+x_{b_1}-2(3)+\frac{\beta_1-1}{2})=3(13+0-6+1)=24$.
\end{ex}
\smallskip

\begin{ex}
The restriction variety $\Big[ L_2 \subseteq L_3 \subseteq Q^7_{17} \subseteq Q^6_{18} \Big]$ parameterizes 3-dimensional projective linear spaces that are contained in a quadric hypersurface in $\mathbb{P}^{17}$ of corank 6 and that intersect a plane contained in the singular locus of the quadric along a line. In terms of partitions this variety is given by $(3^2), (18^2), (6, 7)$ and has dimension $\alpha_1(n_{a_1}-a_1)+\beta_1(d_{b_1}+x_{b_1}-2(5)+\frac{\beta_1-1}{2})=2(3-2)+2(18+2-8+\frac{1}{2})=27$.
\end{ex}

\bigskip


\section{The Resolution of Singularities}

In this section, we present a resolution of singularities for restriction varieties in $OG(k, n)$. We first illustrate the resolution on a few examples and then introduce the general definition.

\begin{ex} Let $V$ be the restriction variety in $OG(1, n)$ defined by the one-step sequence $Q^4_{11}$, where $n \geq 15$. This variety is a singular quadric contained in a 10-projective dimensional linear space whose singular locus is isomorphic to $\mathbb{P}^3$. Consider the flag variety $\widetilde{V}$ defined by
\[ \widetilde{V} = \big\{ (T, Z) \in OF(1, 5; n) \; | \; Q^{4, sing}_{11} \subseteq Z \subseteq Q^4_{11} \big\} \subseteq OG(1, n)\times OG(5, n).\]
The second projection map $\pi_2: (T, Z) \mapsto Z$ maps $\widetilde{V}$ onto $\big\{ Z \in OG(5, n) \; | \; Q^{4, sing}_{11} \subseteq Z \subseteq Q^4_{11} \big\}$ which is isomorphic to $OG(1, 7)$. Over such $Z$, the map has fibers $G(1, 5)$ of dimension 4 so $\widetilde{V}$ is irreducible of dimension 9. The first projection map $\pi_1: (T, Z) \mapsto T$ maps $\widetilde{V}$ onto $V$ where the inverse image is determined uniquely over the smooth locus of $V$. By Zariski's theorem, $\pi_1: \widetilde{V} \to V$ is a resolution of singularities for $V$ where the image of the exceptional locus gives the singularities of $V$. Note that, in this case $\pi_1$ is the blowup of the quadric $V=Q^4_{11}$ along its singular locus.
\end{ex}
\smallskip

\begin{ex} Let $V= \Big[ L_7 \subseteq Q^4_{11} \Big]$ contained in $OG(2, n)$, where $n \geq 15$. The variety $V$ parameterizes the lines in a singular quadric intersecting a fixed linear space that contains the singular locus of the quadric. Consider the variety defined by
\[ \widetilde{V} = \big\{ (T^1, T^2, O, Z) \; | \; T^1 \subseteq T^2, \;\; Q^{4, sing}_{11} \subseteq O \subseteq Z, \;\; T^1 \subseteq O \subseteq L_7 \;\; \mbox{and} \;\; T^2 \subseteq Z \subseteq Q^4_{11} \big\}  \]
where $\dim T^j=j$, $\dim O=5$ and $\dim Z =6$. The properties defining the variety $\widetilde{V}$ can be visualized by the diagram in Figure \ref{figure1}:
\begin{figure} \caption{$\widetilde{V}$ for $\Big[ L_7 \subseteq Q^4_{11} \Big]$ } \label{figure1}
\begin{equation*}
\input{figure1}
\end{equation*}
\end{figure}
Consider the following forgetful maps:
\[ (T^1, T^2, O, Z) \mapsto (T^1, O, Z) \mapsto (T^1, O) \mapsto (O) . \]
We show $\widetilde{V}$ is an iterated tower of $G(l, n)$ and $OG(l, n)$ bundles via these maps. The linear space $O$ satisfies $Q^{4, sing}_{11} \subseteq O \subseteq L_7$ and hence can be parameterized by $G(5-4, 7-4)=G(1, 3)$. For fixed $O$, the linear space $T^1$ satisfies $T^1 \subseteq O$ and hence can be parameterized by $G(1, 5)$. On the other hand, $Z$ satisfies $O \subseteq Z \subseteq Q^4_{11}$. Since $Z$ has to lie in the orthogonal complement of $O$, $Z$ is contained in a quadric of projective dimension 8 with a singular locus of projective dimension 4. Thus $Z$ can be parameterized by $OG(1, 5)$. Finally, the linear space $T^2$ satisfies $T^1 \subseteq T^2 \subseteq Z$ and hence can be parameterized by $G(1, 5)$. Thus $\widetilde{V}$ is a tower of the discussed $G(1, 3)$, $G(1, 5)$, $OG(1, 5)$ and $G(1, 5)$ bundles. This also shows that $\widetilde{V}$ is irreducible of dimension 13. The second projection map
\[ \pi: (T^1, T^2, O, Z) \mapsto T^2 \]
maps $\widetilde{V}$ onto $V$ with fibers determined uniquely over a general point $\Lambda$, that is, over $V^0$. The map $\pi: \widetilde{V} \to V$ is a resolution of singularities by Zariski's theorem.
\end{ex}
\smallskip

\begin{ex} Let $V = \Big[ L_5 \subseteq Q^7_{10} \subseteq Q^2_{20} \Big]$ contained in $OG(3, n)$, where $n \geq 22$. For this restriction variety we consider $\widetilde{V}$ defined by
\begin{align*}
\widetilde{V}=\big\{ (T^1, T^2, T^3, O^1, O^2, Z^1, Z^2) \; | \; & Q^{2, sing}_{20} \subseteq O^1 \subseteq O^2 \subseteq Z^2, \;\; Q^{7, sing}_{10} \subseteq Z^1, \\
& T^1 \subseteq O^1 \subseteq L_5, \;\; T^2 \subseteq O^2 \subseteq Q^7_{10}  \;\; \mbox{and} \;\;   T^3 \subseteq Z^2 \subseteq Q^2_{20} \big\}
\end{align*}
where $\dim T^j=j$, $\dim O^1=3$, $\dim O^2=4$, $\dim Z^1=8$ and $\dim Z^2=5$. The corresponding diagram is given in Figure \ref{figure2}.
\begin{figure} \caption{$\widetilde{V}$ for $\Big[ L_5 \subseteq Q^7_{10} \subseteq Q^2_{20} \Big]$ } \label{figure2}
\begin{equation*}
\input{figure2}
\end{equation*}
\end{figure}
We consider the following forgetful maps:
\begin{align*}
(T^1, T^2, T^3, O^1, O^2, Z^1, Z^2) & \mapsto (T^1, T^2, O^1, O^2, Z^1, Z^2) \mapsto (T^1, T^2, O^1, O^2, Z^1) \\
& \mapsto (T^1, O^1, O^2, Z^1) \mapsto (T^1, O^1, Z^1) \mapsto (T^1, O^1) \mapsto (O^1) .
\end{align*}
The linear space $O^1$ is parameterized by $G(1, 3)$ and for fixed $O$, $T^1$ is parameterized by $G(1, 3)$. The linear space $Z^1$ is parameterized by $OG(1, 3)$. For fixed $Z^1$, $O^2$ satisfies $O^1 \subseteq O^2 \subseteq Z^1$ and hence can be parameterized by $G(1, 5)$. Then $T^2$ is parameterized by $G(1, 3)$. In the last row, as $O^2 \subseteq Z^2 \subseteq Q^2_{20}$, $Z^2$ is parameterized by $OG(1, 14)$. Then $T^3$ is parameterized by $G(1, 3)$. Thus $\widetilde{V}$ is a tower of the discussed $G(1, 3)$, $G(1, 3)$, $OG(1, 3)$, $G(1, 5)$, $G(1, 3)$, $OG(1, 14)$ and $G(1, 3)$ bundles. Thus $\widetilde{V}$ is an irreducible smooth variety of dimension 25. The third projection map
\[ \pi: (T^1, T^2, T^3, O^1, O^2, Z^1, Z^2) \mapsto T^3 \]
gives the resolution of singularities in this example.
\end{ex}
\smallskip

\begin{ex} Let us consider the restriction variety in $OG(10, 70)$ given by the sequence
\[ L_2 \subseteq L_6 \subseteq L_{13} \subseteq L_{14} \subseteq L_{19} \subseteq Q^{17}_{30} \subseteq Q^{11}_{40} \subseteq Q^8_{45} \subseteq Q^7_{46} \subseteq Q^3_{50}. \]
In this case $\widetilde{V}$ satisfies the diagram in Figure \ref{figure3}. The dimensions of the $T$, $Z$ and $O$'s are noted as subscripts.
\begin{figure} \caption{$\widetilde{V}$ for $\Big[ L_2 \subseteq L_6 \subseteq L_{13} \subseteq L_{14} \subseteq L_{19} \subseteq Q^{17}_{30} \subseteq Q^{11}_{40} \subseteq Q^8_{45} \subseteq Q^7_{46} \subseteq Q^3_{50} \Big]$ } \label{figure3}
\begin{equation*}
\input{figure3}
\end{equation*}
\end{figure}
The variety $\widetilde{V}$ is a tower of $G(k, n)$ and $OG(k, n)$ bundles via 25 successive forgetful maps in this case. Starting with an element of $\widetilde{V}$, the forgetful maps trail each row from left to right going from the bottom row to the top row.
\end{ex}

\begin{ex} As a final example, let us illustrate $\widetilde{V}$ for the restriction variety $V = \Big[ L_7 \subseteq Q^5_9 \subseteq Q^4_{10} \Big]$ contained in $OG(3, n)$, where $n \geq 14$, with a given marking $m_\bullet$ for the special index 1. The variety $\widetilde{V}$ satisfies the diagram in Figure \ref{figure4}.
\begin{figure} \caption{$\widetilde{V}$ for $\Big[ L_7 \subseteq Q^5_9 \subseteq Q^4_{10} \Big]$ } \label{figure4}
\begin{equation*}
\input{figure4}
\end{equation*}
\end{figure}
By considering the forgetful maps
\[ (T^1, T^2, O, Z) \mapsto (T^1, O, Z) \mapsto (T^1, O) \mapsto (O) , \]
we obtain that $\widetilde{V}$ is a tower of $G(1, 3), G(1, 5), OG(2, 4)$ and $G(1, 6)$. Here $Z_7$, which satisfies $O_5 \subseteq Z_7 \subseteq Q^4_{10}$, is parameterized by $OG(2, 4)$, and the component that contains $Z$ is the component determined by the marking $m_\bullet$ of $V$.
\end{ex}
\medskip

Let us fix terminology before giving the definition. In the following we say a sequence $A=\Big[A_1 \subseteq \ldots \subseteq A_k\Big]$ is contained in a sequence $B=\Big[B_1 \subseteq \ldots \subseteq B_k\Big]$ if $A_i \subseteq B_i$ for all $1 \leq i \leq k$. We will denote by $A$ both the sequence $\Big[A_1 \subseteq \ldots \subseteq A_k\Big]$ and the ordered set $(A_1, \ldots, A_k)$.
\smallskip

Let $V(L_\bullet, Q_\bullet)$ be a restriction variety defined by the sequence
\[ L_{n_1} \subseteq \ldots \subseteq L_{n_s} \subseteq Q^{r_{k-s}}_{d_{k-s}} \subseteq \ldots \subseteq Q^{r_1}_{d_1} , \]
or equivalently, by the partitions $ (n_{a_1}^{\alpha_1}, \ldots, n_{a_t}^{\alpha_t}), (d_{b_1}^{\beta_1}, \ldots, d_{b_u}^{\beta_u}), (r_1, \ldots, r_{k-s}) $. For each $Q^{r_{b_h}}_{d_{b_h}}$, let $V(Q^{r_{b_h}}_{d_{b_h}} )$ be the subsequence consisting of isotropic linear subspaces $L_{n_{a_\theta}}$ and sub-quadrics $Q^{r_{b_\theta}}_{d_{b_\theta}}$ that strictly contain $Q^{r_{b_h}, sing}_{d_{b_h}}$ and are strictly contained in $Q^{r_{b_h}}_{d_{b_h}}$. We introduce a subsequence $O(Q^{r_{b_h}}_{d_{b_h}} )$ of the same length contained in $V(Q^{r_{b_h}}_{d_{b_h}} )$ that consists of isotropic linear subspaces $O$.
\begin{equation*}
\begin{array}{cccccccccc}
V(Q^{r_{b_h}}_{d_{b_h}}): \quad \quad & \cdots & \subseteq & L_{n_{a_\theta}} & \subseteq & \cdots & \subseteq & Q^{r_{b_\theta}}_{d_{b_\theta}} & \subseteq & \cdots \\
\rotatebox[origin=c]{90}{$\subseteq$} \quad \quad \quad & & & \rotatebox[origin=c]{90}{$\subseteq$} & & & & \rotatebox[origin=c]{90}{$\subseteq$} & & \\
O(Q^{r_{b_h}}_{d_{b_h}}): \quad \quad & \cdots & \subseteq & O^{h, n_{a_\theta}} & \subseteq & \cdots & \subseteq & O^{h, d_{b_\theta}} & \subseteq & \cdots
\end{array}
\end{equation*}

Also, the subsequence $\Big[ L_1 \subseteq \ldots \subseteq Q^{r_{b_{u-1}}}_{d_{b_{u-1}}} \Big]$ obtained by omitting the last $\beta_u$ sub-quadrics from the defining sequence will have a crucial role in the following definition.
\smallskip

Define:
\begin{align*}
\widetilde{V}(L_\bullet, Q_\bullet) := \Big \lbrace \; \big( & T^1, \ldots, T^{t+u}, \; Z^1, \ldots, Z^u, \; O( Q^{r_{b_1}}_{d_{b_1}} ), \ldots, O( Q^{r_{b_u}}_{d_{b_u}}) \big) \; \Big| \\
& \quad Q^{r_{b_h}, sing}_{d_{b_h}} \subseteq O( Q^{r_{b_1}}_{d_{b_1}} ) \subseteq Z^h \subseteq Q^{r_{b_h}}_{d_{b_h}},\\
& \quad O^{h, n_{a_\theta}} \subseteq L_{n_{a_\theta}} \;  \mbox{for all} \; L_{n_{a_\theta}} \; \mbox{in} \; V( Q^{r_{b_h}}_{d_{b_h}}),\\
& \quad O^{h, n_{a_\theta}} \subseteq O^{h+1, n_{a_\theta}} \;  \mbox{for all} \; L_{n_{a_\theta}} \; \mbox{that lies in both} \; V( Q^{r_{b_h}}_{d_{b_h}}) \; \mbox{and} \; V( Q^{r_{b_{h+1}}}_{d_{b_{h+1}}}) ,\\
& \quad O^{h, r_{b_\theta}} \subseteq Q^{r_{b_\theta}}_{d_{b_\theta}} \;  \mbox{for all} \; Q^{r_{b_\theta}}_{d_{b_\theta}} \; \mbox{in} \; V( Q^{r_{b_h}}_{d_{b_h}}),\\
& \quad O^{h, r_{b_\theta}} \subseteq O^{h+1, r_{b_\theta}} \;  \mbox{for all} \; Q^{r_{b_\theta}}_{d_{b_\theta}} \; \mbox{that lies in both} \; V( Q^{r_{b_h}}_{d_{b_h}}) \; \mbox{and} \; V( Q^{r_{b_{h+1}}}_{d_{b_{h+1}}}) ,\\
& \quad T^1 \subseteq \ldots \subseteq T^{t+u} \quad \mbox{for all} \; 1 \leq g \leq t \; \mbox{and} \; 1 \leq h \leq u \\
& \quad ( \; T^1, \ldots, T^{t+u-1} \; ) \subseteq \Big[ L_1 \subseteq \ldots \subseteq Q^{r_{b_{u-1}}}_{d_{b_{u-1}}} \Big] \; \mbox{and} \; T^{t+u} \subseteq Z^u \; \Big\rbrace
\end{align*}
where $\dim T^g=a_g$, $\dim T^{t+h}=k-b_h+1$, $\dim Z^h=r_{b_h}+(k-b_h+1)-x_{b_h}$, $\dim O^{h, n_{a_\theta}} = r_{b_h}+a_\theta-x_{b_h}$ and $\dim O^{h, r_{b_\theta}} = r_{b_h}+(k-b_\theta+1)-x_{b_h}$ for all $1 \leq g \leq t$ and $1 \leq h \leq u$.

\medskip


Drawing a diagram, as in the examples above, provides a tidier framework and gives the intuition behind this construction. Let $L_{n_{a_1}} \subseteq \ldots \subseteq L_{n_{a_\omega}}$ be the isotropic linear subspaces in the defining sequence contained in $Q^{r_{b_u}, sing}_{d_{b_u}}$, thus contained in all other $Q^{r_{b_h}, sing}_{d_{b_h}}$, $1 \leq h \leq u$. The defining properties of $\widetilde{V}$ are visualized in the diagram in Figure \ref{figure5}. Here, the linear spaces $O^{h, \bullet}$ that lie in the column of $Q^{r_{b_h}}_{d_{b_h}}$ form the sequence $O(Q^{r_{b_h}}_{d_{b_h}})$ in the definition of $\widetilde{V}$ above.

\begin{figure} \caption{Definition of $\widetilde{V}$} \label{figure5}
\begin{equation*}
\input{figure5}
\end{equation*}
\end{figure}

\bigskip

Let $\widetilde{V}(L_\bullet, Q_\bullet, m_\bullet)$ be the variety obtained by considering $\widetilde{V}(L_\bullet, Q_\bullet)$ with the marking for each special index inherited from $V(L_\bullet, Q_\bullet, m_\bullet)$. There is a natural projection from $\widetilde{V}(L_\bullet, Q_\bullet, m_\bullet)$ to $V(L_\bullet, Q_\bullet, m_\bullet)$ given by
\[ \pi \; : \; \big(  T^1, \ldots, T^{t+u}, \; Z^1, \ldots, Z^u, \; O( Q^{r_{b_1}}_{d_{b_1}} ), \ldots, O( Q^{r_{b_u}}_{d_{b_u}}) \big) \; \mapsto \; T^{t+u}. \]


\begin{prop} \label{prop2}
Let $V(L_\bullet, Q_\bullet, m_\bullet)$ be a marked restriction variety. The variety $\widetilde{V}(L_\bullet, Q_\bullet, m_\bullet)$ associated to $V(L_\bullet, Q_\bullet, m_\bullet)$ is a smooth irreducible variety of the same dimension as $V(L_\bullet, Q_\bullet, m_\bullet)$.
\end{prop}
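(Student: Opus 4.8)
The plan is to prove smoothness, irreducibility, and the dimension count all at once by exhibiting $\widetilde{V}(L_\bullet, Q_\bullet, m_\bullet)$ as an iterated tower of fiber bundles whose fibers are Grassmannians $G(l,m)$ and orthogonal Grassmannians $OG(l,m)$ (or a single connected component of one, when a marking is present), exactly as illustrated in the five examples preceding the statement. Concretely, I would order the sequence of forgetful maps so that each map peels off one entry of the tuple, following the diagram of Figure \ref{figure5} row by row from the bottom row to the top row, and within each row from left to right. Starting from the single space $O^{u, r_{b_u}} = Z^u$, which is parameterized by an orthogonal Grassmannian inside $Q^{r_{b_u}}_{d_{b_u}}$ (modulo the orthogonal complement constraints, as in the examples), I would inductively add one space at a time and check that, conditioned on all previously chosen spaces, the new space ranges over a Grassmannian or orthogonal Grassmannian bundle of constant rank.

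The key steps, in order, are: (i) fix the global ordering of the $t+u$ coordinates $T^\bullet$, the $u$ coordinates $Z^\bullet$, and the $O^{h,\bullet}$ coordinates into one chain of forgetful maps consistent with all the containment constraints in the definition of $\widetilde{V}$; (ii) for each step, identify which ambient space the new linear space is constrained to lie in and which fixed subspace it must contain, and verify that this locus is the full (orthogonal) Grassmannian of subspaces of the prescribed dimension --- this is where the dimension formulas $\dim T^g = a_g$, $\dim T^{t+h} = k-b_h+1$, $\dim Z^h = r_{b_h}+(k-b_h+1)-x_{b_h}$, $\dim O^{h,n_{a_\theta}} = r_{b_h}+a_\theta - x_{b_h}$, $\dim O^{h,r_{b_\theta}} = r_{b_h}+(k-b_\theta+1)-x_{b_h}$, together with admissibility conditions (1)--(9), are needed to guarantee the requisite inequalities (so the Grassmannians are nonempty and of constant dimension); (iii) for the orthogonal Grassmannian steps invoke the fact that each $Z^h$, being forced into the orthogonal complement of the previously chosen isotropic space $O^{h,\bullet}$ inside $Q^{r_{b_h}}_{d_{b_h}}$, lies in an irreducible sub-quadric of controlled corank (here the irreducibility condition (7), $r_{k-s}\le d_{k-s}-3$, matters) --- and when a special index carries a marking, restrict to the designated connected component, which is still smooth and irreducible of the same dimension; (iv) conclude that $\widetilde{V}$ is smooth and irreducible since an iterated tower of bundles with smooth irreducible fibers over a smooth irreducible base (a point) is smooth and irreducible; (v) sum the fiber dimensions over all steps and compare with the formula in Proposition \ref{dim}, showing the two agree --- the $\frac{\beta_h-1}{2}$ terms in that formula should emerge from the telescoping contributions of the $O^{h,r_{b_\theta}}$ spaces within each block of sub-quadrics of consecutive dimension.

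The main obstacle I expect is step (ii)--(iii): verifying that at every stage the set of valid choices for the next space is genuinely a full (orthogonal) Grassmannian bundle of constant rank, rather than something smaller or reducible. This requires a careful bookkeeping argument that the imposed containments $Q^{r_{b_h},sing}_{d_{b_h}} \subseteq O(Q^{r_{b_1}}_{d_{b_1}}) \subseteq Z^h$, the cross-row inclusions $O^{h,\bullet}\subseteq O^{h+1,\bullet}$, and the final constraint $(T^1,\dots,T^{t+u-1}) \subseteq [L_1 \subseteq \cdots \subseteq Q^{r_{b_{u-1}}}_{d_{b_{u-1}}}]$ are mutually compatible and leave exactly the expected degrees of freedom; in particular one must check that for the orthogonal-Grassmannian steps the relevant restricted quadric always has corank and dimension in the range where $OG$ is irreducible (or splits into two components of equal dimension, handled by the marking), which is precisely what admissibility condition (7) and the definition of special index and marking (Definitions \ref{def3}) are designed to ensure. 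The dimension reconciliation in step (v) is then a routine but somewhat lengthy algebraic identity, which I would organize by grouping the forgetful maps according to the columns of Figure \ref{figure5}.
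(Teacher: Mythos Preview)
Your approach is essentially the paper's: exhibit $\widetilde{V}$ as an iterated tower of Grassmannian and orthogonal Grassmannian bundles via successive forgetful maps, then read off smoothness, irreducibility, and the dimension. Two small corrections are worth flagging. First, your stated forgetful-map order (bottom row to top row, left to right within a row) is exactly the paper's, but that order forces the \emph{base} of the tower to be the last surviving coordinate, which sits at the top of the diagram (for instance $O^1$ in Figure~\ref{figure2}), not $Z^u$; your identification of $Z^u$ as the starting point is inconsistent with the direction you just described. Second, the $\tfrac{\beta_h-1}{2}$ term in Proposition~\ref{dim} does not come from telescoping the $O^{h,\bullet}$ contributions; it appears directly as part of $\dim OG(\beta_h, d_{b_h}-r_{b_h}-\cdots)$, the orthogonal Grassmannian parameterizing $Z^h$ once the relevant $O$ above it has been fixed. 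Finally, the paper organizes the bookkeeping row by row (each row of the diagram contributes exactly one summand in the dimension formula of Proposition~\ref{dim}), which is tidier than the column-wise grouping you propose.
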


\begin{proof}
Consider the successive forgetful maps omitting one coordinate of $\widetilde{V}$ at a time, going from left to right in each row, starting at the bottom row and going up. The proof of this proposition is based on constructing a tower of $G(l, n)$ and $OG(l, n)$ bundles via these forgetful maps. In the following, we study the following possible types of rows in a diagram:

\begin{enumerate}
\setlength\itemsep{1.5em}
\vspace{0.5em}

\item
For $L_{n_{a_g}} \subsetneq Q^{r_{b_u}}_{d_{b_u}}$, we have $T^{g-1} \subseteq T^g \subseteq L_{n_{a_g}}$. Hence $T^g$ is parameterized by $G(a_g-a_{g-1}, n_{a_g}-a_{g-1})$ which has dimension $(a_g-a_{g-1})(n_{a_g}-a_g)=\alpha_g(n_{a_g}-a_g)$.

\item
Suppose for $L_{n_{a_g}}$, the sub-quadrics whose singular loci lie between $L_{n_{a_g}}$ and $L_{n_{a_{g-1}}}$ are $Q^{r_{b_{\eta + c}}}_{d_{b_{\eta + c}}}, \ldots, Q^{r_{b_\eta}}_{d_{b_\eta}}$ for some number $c$, that is,
\[ Q^{r_{b_u}, sing}_{d_{b_u}} \subseteq \ldots \subseteq Q^{r_{b_{\eta +c+1}}, sing}_{d_{b_{\eta +c+1}}} \subsetneq L_{n_{a_{g-1}}} \subseteq Q^{r_{b_{\eta +c}}, sing}_{d_{b_{\eta +c}}} \subseteq \ldots \subseteq Q^{r_{b_\eta}, sing}_{d_{b_\eta}} \subsetneq L_{n_{a_g}} . \]
Note that $x_{b_\eta}= \ldots = x_{b_{\eta+c}} = a_{g-1}$ in this setting. The row consisting of $T^g$, $O^{\bullet, n_{a_g}}$, $L_{n_{a_g}}$ satisfies the diagram in Figure \ref{figure6}
\begin{figure} \caption{Row of Type 2 in $\widetilde{V}$} \label{figure6}
\begin{equation*}
\input{figure6}
\end{equation*}
\end{figure}
\smallskip

We start by choosing $O^{\eta, n_{a_g}}$. The linear space $O^{\eta, n_{a_g}}$ satisfying $Q^{r_{b_\eta}, sing}_{d_{b_\eta}} \subseteq O^{\eta, n_{a_g}} \subseteq L_{n_{a_g}}$ is parameterized by the Grassmannian $G((r_{b_\eta}+a_g-x_{b_\eta})-r_{b_\eta}, n_{a_g}-r_{b_\eta})$. In a similar fashion, the parameterization of $T^g, O^{u, n_{a_g}}, \ldots, O^{\eta, n_{a_g}}$ are given by Grassmannians whose dimensions add up to $\alpha_g(n_{a_g}-a_g)$ as in Table \ref{table1}.
\begin{table} \caption{Dimensions for a row of Type 2 in $\widetilde{V}$} \label{table1}
\begin{equation*}
\begin{array}{lcl}
\mbox{\emph{Coordinates of $\widetilde{V}$ in the $g$-th row:}} &\quad& \mbox{\emph{Dimensions of the corresponding Grassmannian:}}\\
Q^{r_{b_\eta}, sing}_{d_{b_\eta}} \subseteq O^{\eta, n_{a_g}} \subseteq L_{n_{a_g}} && (a_g-x_{b_\eta})\big(n_{a_g}-a_g-(r_{b_\eta}-x_{b_\eta})\big)\\
Q^{r_{b_{\eta+1}}, sing}_{d_{b_{\eta+1}}} \subseteq O^{\eta+1, n_{a_g}} \subseteq O^{\eta, n_{a_g}} &&  (a_g-x_{b_{\eta+1}})\big((r_{b_\eta}-x_{b_\eta})-(r_{b_{\eta+1}}-x_{b_{\eta+1}})\big)\\
\rotatebox[origin=c]{-90}{$\cdots$} && \rotatebox[origin=c]{-90}{$\cdots$}\\
Q^{r_{b_{\eta+c}}, sing}_{d_{b_{\eta+c}}} \subseteq O^{\eta+c, n_{a_g}} \subseteq O^{\eta+c-1, n_{a_g}} &&  (a_g-x_{b_{\eta+c}})\big((r_{b_{\eta+c-1}}-x_{b_{\eta+c-1}})-((r_{b_{\eta+c}}-x_{b_{\eta+c}})\big)\\
O^{\eta+c+1, n_{a_{g-1}}} \subseteq O^{\eta+c+1, n_{a_g}} \subseteq O^{\eta+c, n_{a_g}} &&  (a_g-a_{g-1})\big((r_{b_{\eta+c}}-x_{b_{\eta+c}})-((r_{b_{\eta+c+1}}-x_{b_{\eta+c+1}})\big)\\
\rotatebox[origin=c]{-90}{$\cdots$} && \rotatebox[origin=c]{-90}{$\cdots$}\\
O^{u, n_{a_{g-1}}} \subseteq O^{u, n_{a_g}} \subseteq O^{u-1, n_{a_g}} &&  (a_g-a_{g-1})\big((r_{b_{u-1}}-x_{b_{u-1}})-((r_{b_{u}}-x_{b_{u}})\big)\\
T^{g-1} \subseteq T^g \subseteq O^{u, n_{a_g}} && (a_g-a_{g-1})\big( r_{b_u} -x_{b_u} \big)
\end{array}
\end{equation*}
\end{table}

\item
Consider the row that corresponds to $Q^{r_{b_1}}_{d_{b_1}}$. Depending on $r_{b_1}$, there are two possibilities for the diagram. If $r_{b_1} \geq n_{a_t}$ then $Z^1$ is determined by $Q^{r_{b_1}, sing}_{d_{b_1}} \subseteq Z^1 \subseteq Q^{r_{b_1}}_{d_{b_1}}$. Explicitly, suppose $L_{n_{a_t}}$ is positioned as $Q^{r_{b_{c+1}}, sing}_{d_{b_{c+1}}} \subsetneq L_{n_{a_t}} \subseteq Q^{r_{b_c}, sing}_{d_{b_c}} \subseteq \ldots \subseteq Q^{r_{b_1}, sing}_{d_{b_1}}$ for some number $c$. Note that $x_{b_1}=\ldots=x_{b_c}=t$ in this setting. The diagram is as in Figure \ref{figure7}.
\begin{figure} \caption{Row of Type 3 in $\widetilde{V}$} \label{figure7}
\begin{equation*}
\input{figure7}
\end{equation*}
\end{figure}
We start by choosing $Z^1$. The linear space $Z^1$ satisfies $Q^{r_{b_1}, sing}_{d_{b_1}} \subseteq Z^1 \subseteq Q^{r_{b_1}}_{d_{b_1}}$ and $\dim Z^1 = r_{b_1}+(k-b_1+1)-x_{b_1}=r_{b_1}+\beta_1$. Hence $Z^1$ can be parameterized by $OG(\beta_1, d_{b_1}-r_{b_1})$. Note that $OG(\beta_1, d_{b_1}-r_{b_1})$ is irreducible as $d_{b_1}-r_{b_1}-2\beta_1 \geq 3$. The linear spaces $T^{t+1}, O^{u, r_{b_1}}, \ldots, O^{2, r_{b_1}}$ can be parameterized by Grassmannians whose dimensions add up to $\beta_1(d_{b_1}+x_{b_1}-2(k-b_1+1)-\frac{\beta_1-1}{2})$, see Table \ref{table2}. Note that $\dim OG(k, n)=k(n-2k+\frac{k-1}{2})$ (see \cite{coskun1} for a proof).
\begin{table}  \caption{Dimensions for a row of Type 3 in $\widetilde{V}$} \label{table2}
\begin{equation*}
\begin{array}{lll}
\mbox{\emph{Coordinates of $\widetilde{V}$ in the $(t+1)$-st row:}} &\quad& \mbox{\emph{Dimensions of the corresponding Grassmannian:}}\\
Q^{r_{b_1}, sing}_{d_{b_1}} \subseteq Z^1 \subseteq Q^{r_{b_1}}_{d_{b_1}} && \beta_1(d_{b_1}+x_{b_1}-2(k-b_1+1) -(r_{b_1}-x_{b_1})+\frac{\beta_1-1}{2})\\
Q^{r_{b_2}, sing}_{d_{b_2}} \subseteq O^{2, r_{b_1}} \subseteq Z^1 && (k-b_1+1-x_2)((r_{b_1}-x_{b_1})-(r_{b_2}-x_{b_2}))\\
Q^{r_{b_3}, sing}_{d_{b_3}} \subseteq O^{3, r_{b_1}} \subseteq O^{2, r_{b_1}} && (k-b_1+1-x_3)((r_{b_2}-x_{b_2})-(r_{b_3}-x_{b_3}))\\
\rotatebox[origin=c]{-90}{$\cdots$} && \rotatebox[origin=c]{-90}{$\cdots$}\\
Q^{r_{b_c}, sing}_{d_{b_c}} \subseteq O^{c, r_{b_1}} \subseteq O^{c-1, r_{b_1}} && (k-b_1+1-x_c)((r_{b_{c-1}}-x_{b_{c-1}})-(r_{b_c}-x_{b_c}))\\
O^{c+1, n_{a_t}} \subseteq O^{c+1, r_{b_1}} \subseteq O^{c, r_{b_1}} && (k-b_1+1-a_t)((r_{b_c}-x_{b_c})-(r_{b_{c+1}}-x_{b_{c+1}}))\\
\rotatebox[origin=c]{-90}{$\cdots$} && \rotatebox[origin=c]{-90}{$\cdots$}\\
O^{u, n_{a_t}} \subseteq O^{u, r_{b_1}} \subseteq O^{u-1, r_{b_1}} && (k-b_1+1-a_t)((r_{b_{u-1}}-x_{b_{u-1}})-(r_{b_u}-x_{b_u}))\\
T^t \subseteq T^{t+1} \subseteq O^{u, r_{b_1}} && (k-b_1+1-a_t)(r_{b_u}-x_{b_u})
\end{array}
\end{equation*}
\end{table}

\item
As another case for the row that corresponds to $Q^{r_{b_1}}_{d_{b_1}}$, if $r_{b_1} < n_{a_t}$, then $Z^1$ is determined by $O^{1, n_{a_t}} \subseteq Z^1 \subseteq Q^{r_{b_1}}_{d_{b_1}}$. The linear space $Z^1$ has to be contained in the orthogonal complement of $O^{1, n_{a_t}}$, so $Z^1 \subseteq Q^{r_{b_1}+(a_t-x_{b_1})}_{d_{b_1}-(a_t-x_{b_1})}$. Hence $Z^1$ can be parameterized by $OG(\beta_1, d_{b_1}-r_{b_1}-2(a_t-x_{b_1}))$. If $OG(\beta_1, d_{b_1}-r_{b_1}-2(a_t-x_{b_1}))$ has two components, then $Z^1$ belongs to the component determined by the marking $m_\bullet$. The parameterizations of $T^{t+1}, O^{u, r_{b_1}}, \ldots, O^{2, r_{b_1}}$ are similar to the previous case, the total dimension is $\beta_1(d_{b_1}+x_{b_1}-2(k-b_1+1)-\frac{\beta_1-1}{2})$ as before. The diagram and the parameterizations in this case are as in Figure \ref{figure8} and Table \ref{table3}.
\begin{figure} \caption{Row of Type 4 in $\widetilde{V}$} \label{figure8}
\begin{equation*}
\input{figure8}
\end{equation*}
\end{figure}

\begin{table} \caption{Dimensions for a row of Type 4 in $\widetilde{V}$} \label{table3}
\begin{equation*}
\begin{array}{lll}
\mbox{\emph{Coordinates of $\widetilde{V}$ in the $(t+1)$-st row:}} &\quad& \mbox{\emph{Dimensions of the corresponding Grassmannian:}}\\
O^{1, n_{a_t}} \subseteq Z^1 \subseteq Q^{r_{b_1}}_{d_{b_1}} && \beta_1(d_{b_1}+x_{b_1}-2(k-b_1+1) -(r_{b_1}-x_{b_1})+\frac{\beta_1-1}{2})\\
O^{2, n_{a_t}} \subseteq O^{2, r_{b_1}} \subseteq Z^1 && (k-b_1+1-x_2)((r_{b_1}-x_{b_1})-(r_{b_2}-x_{b_2}))\\
\rotatebox[origin=c]{-90}{$\cdots$} && \rotatebox[origin=c]{-90}{$\cdots$}\\
T^t \subseteq T^{t+1} \subseteq O^{u, r_{b_1}} && (k-b_1+1-a_t)(r_{b_u}-x_{b_u})
\end{array}
\end{equation*}
\end{table}

\item
Finally, the $(t+h)$-th row for some $h \geq 2$ is  similar to the case above. The parameterizations are given by a tower of Grasmanninans contained in an orthogonal Grassmannian and the total dimension adds up to $d_{b_h}+x_{b_h}-2(k-b_h+1)-\frac{\beta_h-1}{2}$. The diagram and the parameterizations are as in Figure \ref{figure9} and Table \ref{table4}.
\begin{figure} \caption{Row of Type 5 in $\widetilde{V}$} \label{figure9}
\begin{equation*}
\input{figure9}
\end{equation*}
\end{figure}

\begin{table} \caption{Dimensions for a row of Type 5 in $\widetilde{V}$} \label{table4}
\begin{equation*}
\begin{array}{lll}
\mbox{\emph{Coordinates of $\widetilde{V}$ in the $(t+h)$-th row:}} &\quad\quad& \mbox{\emph{Dimension of the corresponding Grassmannian:}}\\
O^{h, r_{b_{h-1}}} \subseteq Z^h \subseteq Q^{r_{b_h}} && \beta_h(d_{b_h}+x_{b_h}-2(k-b_h+1) -(r_{b_h}-x_{b_h})+\frac{\beta_h-1}{2})\\
O^{h+1, r_{b_{h-1}}} \subseteq O^{h+1, r_{b_h}} \subseteq Z^h && (b_{h-1}-b_h)((r_{b_h}-x_{b_h})-(r_{b_{h+1}}-x_{b_{h+1}}))\\
\rotatebox[origin=c]{-90}{$\cdots$} && \rotatebox[origin=c]{-90}{$\cdots$}\\
T^{t+h-1} \subseteq T^{t+h} \subseteq O^{u, r_{b_h}} && (b_{h-1}-b_h)(r_{b_u}-x_{b_u})
\end{array}
\end{equation*}
\end{table}
\end{enumerate}

\medskip

The variety $\widetilde{V}$ is smooth as it is an iterated tower of the ordinary and the orthogonal Grassmannian bundles observed above. The inverse image $\pi^{-1}(\Lambda)$ of a point $\Lambda$ in $V$ is irreducible by the same observations, hence $\widetilde{V}$ is irreducible for a marked restriction variety. Furthermore, combining the results from each row of the diagram, $\dim \widetilde{V}$ is given by
\[ \dim \widetilde{V}= \sum_{g=1}^t \alpha_g \left( n_{a_g} - a_g \right) + \sum_{h=1}^u \beta_h\left(d_{b_h}+x_{b_h}-2(k-b_h+1)-\frac{\beta_h-1}{2}\right) =\dim V \]
which concludes the proof.
\end{proof}

\bigskip


Over $V^0(L_\bullet, Q_\bullet)$, the inverse image of a point $\pi^{-1}(\Lambda)$ is determined uniquely by
\begin{align*}
&  T^g=\Lambda \cap L_{n_{a_g}}, \quad T^{t+h}=\Lambda \cap Q^{r_{b_h}}_{d_{b_h}}, \\
&  O^{h, r_{b_\theta}}=\overline{Q^{r_{b_h}, sing}_{d_{b_h}}, \; \Lambda \cap Q^{r_{b_\theta}}_{d_{b_\theta}}}, \quad O^{h, n_{a_\theta}}=\overline{Q^{r_{b_h}, sing}_{d_{b_h}}, \; \Lambda \cap L_{n_{a_\theta}}} \quad \mbox{and}\\
&  Z^h=\overline{Q^{r_{b_h}, sing}_{d_{b_h}}, \; \Lambda \cap Q^{r_{b_h}}_{d_{b_h}}} \quad \quad \mbox{for all} \;\; 1\leq g \leq t, \;\; 1 \leq h \leq u .
\end{align*}
$V^0(L_\bullet, Q_\bullet)$ is in the smooth locus of $V(L_\bullet, Q_\bullet)$ since it is homogeneous under the action of $SO(n)$. Then, Zariski's main theorem shows that $\pi$ is an isomorphism over $V^0(L_\bullet, Q_\bullet)$. Therefore we have

\medskip

\begin{thm}\label{resthm}
The map $\pi : \widetilde{V}(L_\bullet, Q_\bullet) \to V(L_\bullet, Q_\bullet)$ is a resolution of singularities.
\end{thm}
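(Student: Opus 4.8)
The statement follows almost immediately from the work done in Proposition \ref{prop2} together with Zariski's Main Theorem, so the bulk of the argument is already in place. The plan is to verify the three hypotheses needed to invoke Zariski's Main Theorem: that $\widetilde{V}(L_\bullet, Q_\bullet)$ is smooth, that $\pi$ is a surjective projective morphism onto $V(L_\bullet, Q_\bullet)$, and that $\pi$ is generically one-to-one. The first is Proposition \ref{prop2}. For the second, I would observe that $\pi$ is the restriction of the projection $\prod_i G(a_i,n) \times \prod_i G(k-b_i+1,n) \times (\text{products of Grassmannians of } O\text{'s and } Z\text{'s}) \to G(k,n)$ to the closed subvariety $\widetilde V$, hence is projective; surjectivity onto $V$ follows because the image is a closed irreducible subvariety of $OG(k,n)$ of the same dimension as $V$ (by Proposition \ref{prop2}) that contains every $\Lambda \in V^0$ — each such $\Lambda$ admits a preimage by the explicit formulas for $T^g, T^{t+h}, O^{h,\bullet}, Z^h$ in terms of $\Lambda$ — and $V$ is the closure of $V^0$.

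For the generic injectivity, the key point is the explicit description of $\pi^{-1}(\Lambda)$ for $\Lambda \in V^0(L_\bullet, Q_\bullet)$ given just before the theorem statement: for such $\Lambda$ one must have $T^g = \Lambda \cap L_{n_{a_g}}$, $T^{t+h} = \Lambda \cap Q^{r_{b_h}}_{d_{b_h}}$, and the $O$'s and $Z^h$ are the spans of the appropriate singular locus with the appropriate intersection of $\Lambda$. I would spell out why these are forced: since $\dim(\Lambda\cap L_{n_{a_g}}) = a_g = \dim T^g$ and $T^g \subseteq \Lambda$ (because $T^g \subseteq T^{t+u} = \Lambda$) and $T^g \subseteq L_{n_{a_g}}$, we get $T^g = \Lambda \cap L_{n_{a_g}}$; similarly for $T^{t+h}$; and then each $O^{h,\bullet}$ and $Z^h$, being squeezed between $Q^{r_{b_h},sing}_{d_{b_h}}$ and the relevant ambient space while having the predicted dimension, is forced to equal the indicated span once one checks the dimension count $\dim(\overline{Q^{r_{b_h},sing}_{d_{b_h}},\, \Lambda\cap(\cdot)}) = r_{b_h} + (\text{target dim}) - x_{b_h}$, which uses precisely the transversality built into conditions (4) and (5) of Definition \ref{admissible} (so that the singular locus meets $\Lambda$ in the minimal dimension $x_{b_h}$). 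Hence $\pi$ is bijective, indeed an isomorphism, over $V^0$.

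Finally I would note that $V^0(L_\bullet, Q_\bullet)$ lies in the smooth locus of $V(L_\bullet, Q_\bullet)$ because it is a single orbit under $SO(n)$ (equivalently, $\mathrm{Stab}$ acts transitively on it), so $V$ is normal along $V^0$; combined with the bijectivity of $\pi$ over this dense open set and the smoothness and properness of $\widetilde V \to V$, Zariski's Main Theorem gives that $\pi$ is an isomorphism over $V^0$. Since $\widetilde V$ is smooth and irreducible and $\pi$ is birational and proper, $\pi$ is a resolution of singularities.

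**Expected main obstacle.** Essentially all the content is already discharged in Proposition \ref{prop2} and the preceding paragraph, so there is no real obstacle; the only point requiring a little care is the claim that the explicit preimage formulas actually describe $\pi^{-1}(\Lambda)$ — i.e., that the displayed linear spaces are the \emph{unique} ones satisfying the defining incidences of $\widetilde V$ for a general $\Lambda$. This is where one must invoke the admissibility conditions to guarantee the dimension counts work out (so that, e.g., $\overline{Q^{r_{b_h},sing}_{d_{b_h}},\,\Lambda\cap L_{n_{a_\theta}}}$ genuinely has dimension $r_{b_h}+a_\theta - x_{b_h}$ and is therefore the forced value of $O^{h,n_{a_\theta}}$). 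I would present this as a short lemma-style verification rather than a computation.
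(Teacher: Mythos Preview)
Your proposal is correct and follows essentially the same approach as the paper: smoothness and equidimensionality from Proposition \ref{prop2}, the explicit unique preimage formulas over $V^0$, smoothness of $V^0$ via homogeneity under $SO(n)$, and then Zariski's Main Theorem. You are in fact slightly more careful than the paper in spelling out why the preimage formulas are forced (via the admissibility conditions and dimension counts) and in noting projectivity and surjectivity explicitly, but the strategy is identical.
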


\bigskip


\section{The Exceptional Locus}

We now study the exceptional locus of $\pi$. More specifically, we are interested in the codimension of the components of the exceptional locus.

Corresponding to the three types of conditions in Definition \ref{restdef}, namely, 
\[ \dim(\Lambda \cap Q^{r_i, sing}_{d_i})=x_i \; , \;\; \dim(\Lambda \cap L_{n_j})=j \; , \;\; \mbox{and} \;\; \dim(\Lambda \cap Q^{r_i}_{d_i})=k-i+1 \; , \]
we consider three types of loci $\Sigma$ where $\pi$ has positive dimensional fibers. The image of the exceptional locus of $\pi$ is equal to the union of the following $\Sigma$'s

\begin{description}[leftmargin=*]
\setlength\itemsep{0.5em}
\vspace{0.5em}

\item[I] $\Sigma_{r_{b_h}}$ : The Zariski closure of the subvariety of $V(L_\bullet, Q_\bullet, m_\bullet)$ parameterizing $k$-dimensional isotropic subspaces $\Lambda$ such that $\dim(\Lambda \cap Q^{r_{b_h}, sing}_{d_{b_h}}) = x_{b_h}+1$ for some $1 \leq h \leq u$.

\item[II] $\Sigma_{n_{a_g}}$ : The Zariski closure of the subvariety of $V(L_\bullet, Q_\bullet, m_\bullet)$ parameterizing $k$-dimensional isotropic subspaces $\Lambda$ such that $\dim(\Lambda \cap L_{n_{a_g}}) = a_g +1$ for some $1 \leq g \leq t$, or $\dim(\Lambda \cap L_{n_s}) = s +2$ in a certain case that is discussed in Remark \ref{contained}.

\item[III] $\Sigma_{d_{b_h}}$ : The Zariski closure of the subvariety of $V(L_\bullet, Q_\bullet, m_\bullet)$ parameterizing $k$-dimensional isotropic subspaces $\Lambda$ such that $\dim(\Lambda \cap Q^{r_{b_h}}_{d_{b_h}}) = k-b_h +2$ for some $1 \leq h \leq u-1$.

\vspace{0.5em}
\end{description}

\begin{remark} \label{contained}

Note that these loci do not exist for every restriction variety. There are natural restrictions for their existence resulting from the properties of quadrics. The locus $\Sigma_{n_{a_g}}$, for some $1 \leq g \leq t$, exists only if $n_{a_g}>a_g$, and the locus $\Sigma_{d_{b_h}}$, for some $1 \leq h \leq u-1$, exists only if $u>1$ and $d_{b_h}-r_{b_h}-2\beta_1 \geq 3$ (requirement for the irreducibility of the quadric that arises in the sequence of $\Sigma_{d_{b_h}}$). Similarly, the locus $\Sigma_{r_{b_h}}$, for some $1 \leq h \leq u$, exists only if $r_{b_h}>x_{b_h}$

Furthermore, different components of $OG(m, 2m)$ must be kept in mind in a certain case. Suppose $b_1$ is a special index (that is, $x_{b_1} = k-b_1+1 - \frac{d_{b_1} - r_{b_1}}{2}$ as in Definition \ref{def3}) and $2n_s = r_{b_1} + d_{b_1}$. The linear space $L_{n_s}$ belongs to one of the components of the Fano variety of maximal dimensional linear spaces contained in $Q^{r_{b_1}}_{d_{b_1}}$. For a general $k$-plane $\Lambda$, the $n_s$-dimensional linear subspace $\Lambda \cap Q^{r_{b_1}}_{d_{b_1}}$ lies in the other component. Note that two linear spaces in $OG(m, 2m)$ belong to the same component if and only if their intersection is equal to $m$ mod 2. Therefore, no $\Lambda$ in $V$ satisfies $\dim(\Lambda \cap L_{n_s}) = s +1$, but there may be elements with $\dim(\Lambda \cap L_{n_s}) = s +2$.

\end{remark}
\smallskip

\begin{ex}
The locus $\Sigma_{r_{b_1}}$ does not make sense for the restriction variety given by $\Big[ Q^0_8 \subseteq Q^0_9 \Big]$ since $Q^{0, sing}_9$ is empty. Similarly the locus $\Sigma_{r_{b_1}}$ does not exist for the restriction variety given by $\Big[ L_1 \subseteq Q^1_7 \Big]$ since $x_1=1$ and it is not possible to intersect $Q^{1, sing}_7$ in a higher dimension.
\end{ex}

\begin{ex}
The loci $\Sigma_{n_{a_g}}$ do not exist for the restriction variety given by $\Big[ L_1 \subseteq L_7 \subseteq L_8 \Big]$; lines contained in $L_8$ containing $L_1$ cannot intersect $L_8$ or $L_1$ in higher dimensions. Similarly, $\Sigma_{d_{b_1}}$ does not exist for the restriction variety given by $\Big[ Q^2_7 \subseteq Q^1_8 \Big]$.
\end{ex}

\begin{ex}
Let $V=\Big[ L_2 \subseteq Q^0_4 \Big]$, the variety of lines contained in a smooth quadric surface intersecting a fixed line on the surface. This (marked) restriction variety is one of the components of the lines on the quadric surface. The locus $\Sigma_{n_{a_1}}$, given by $\Big[ L_1 \subseteq L_2 \Big]=L_2$, lies in the other component, and hence is not contained in $V$. It is easy to see $V$ is smooth in this example as it is isomorphic to $\mathbb{P}^1$.
\end{ex}

\begin{ex}
Let $V$ be the restriction variety in $OG(4, 8)$ given by $\Big[ L_1 \subseteq L_3 \subseteq L_4 \subseteq Q^1_7 \Big]$. A general element $\Lambda$ of $V$ satisfies $\dim (\Lambda \cap L_4) = 3$, therefore $L_4$ and $\Lambda$ lie in different components of $OG(4, 8)$. This shows that the restriction variety given by the sequence $\Big[ L_1 \subseteq L_2 \subseteq L_3 \subseteq L_4 \Big]=L_4$ is not in the image of the exceptional locus of $\pi$ in this case.
\end{ex}

\begin{ex}
Let $V$ be given by $\Big[ L_3 \subseteq L_4 \subseteq Q^1_7 \subseteq Q^0_8 \Big]$. A general element $\Lambda$ of $V$ satisfies $\dim (\Lambda \cap L_4) = 2$, and hence $L_4$ lies in the same component of $OG(k, 2k)$ as $V$. Since we have $\dim (\Lambda \cap L_4) = 4$ mod 2 for linear spaces $\Lambda$ in the same component as $L_4$, we conclude $\dim (\Lambda \cap L_4)$ must be either 2 or 4. Therefore, in this case we have $\Sigma_{n_{a_1}} = \Big[ L_1 \subseteq L_2 \subseteq L_3 \subseteq L_4 \Big]$.
\end{ex}

\bigskip

In the following, we study loci $\Sigma$ that are contained in the restriction variety $V$. Over each $\Sigma$,  $\pi^{-1} (\Sigma)$ is irreducible of codimension
\[ \codim (\pi^{-1} (\Sigma))= \codim(\Sigma) - \dim(\pi^{-1}(\Lambda)) \]
for a general point $\Lambda$ in $\Sigma$. We now consider each $\Sigma$ separately, observe the sequence that defines the restriction variety $\Sigma$, and study $\codim (\pi^{-1} (\Sigma))$ in each case. Our aim is to find the components of the exceptional locus with codimension greater than 1.

\begin{obs} \label{codimcomp}
A component of the exceptional locus of $\pi$ with image of one of the types
\begin{itemize}
\item $\Sigma_{r_{b_h}}$ with $r_{b_h} < n_s$
\item $\Sigma_{n_{a_g}}$ with $1 \leq g \leq t-1$
\item $\Sigma_{d_{b_h}}$ for all $1 \leq h \leq u-1$ 
\end{itemize}
has codimension larger than 1 (by I.B, I.C, II.B and III below). A component with image of type $\Sigma_{r_{b_h}}$ with $r_{b_h} \geq n_s$ has codimension equal to 1 (by I.A and I.D below). A component with image of type $\Sigma_{n_s}$ has codimension given by $\codim (\pi^{-1}(\Sigma_{n_s}))  = d_{k-s} + x_{k-s} -s - n_s  - 1$ which may be larger than or equal to 1. 
\end{obs}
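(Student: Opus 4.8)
This observation collects the codimension computations carried out in the cases I.A--I.D, II.A--II.B and III below, so we only describe here the uniform mechanism behind them and indicate how the stated trichotomy arises. In every case the scheme is the same. First, we realize the locus $\Sigma$ (one of $\Sigma_{r_{b_h}}$, $\Sigma_{n_{a_g}}$, $\Sigma_{d_{b_h}}$) as itself a marked restriction variety in $OG(k,n)$: its defining sequence is obtained from the admissible sequence of $V$ by raising exactly one of the rank data by one---$x_{b_h}\mapsto x_{b_h}+1$ for $\Sigma_{r_{b_h}}$, the value $\dim(\Lambda\cap L_{n_{a_g}})$ from $a_g$ to $a_g+1$ (or to $a_g+2$ in the exceptional case of Remark \ref{contained}) for $\Sigma_{n_{a_g}}$, and $\dim(\Lambda\cap Q^{r_{b_h}}_{d_{b_h}})$ from $k-b_h+1$ to $k-b_h+2$ for $\Sigma_{d_{b_h}}$---and then normalizing the result into the form of Definition \ref{admissible}. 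Granting that the new sequence is admissible, Proposition \ref{dim} supplies $\dim\Sigma$, hence $\codim(\Sigma)=\dim V-\dim\Sigma$. Second, for a general $\Lambda\in\Sigma$ we compute $\dim\pi^{-1}(\Lambda)$: starting from the explicit description of $\pi^{-1}(\Lambda)$ over $V^0$ recorded before Theorem \ref{resthm}, we identify which coordinates $T^\bullet,Z^\bullet,O^{\bullet,\bullet}$ of $\widetilde V$ cease to be uniquely determined over $\Sigma$ and parametrize each freed coordinate by an ordinary or orthogonal Grassmannian exactly as in the row-by-row analysis in the proof of Proposition \ref{prop2}; summing gives $\dim\pi^{-1}(\Lambda)$. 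Finally we substitute into $\codim(\pi^{-1}(\Sigma))=\codim(\Sigma)-\dim\pi^{-1}(\Lambda)$ and apply the admissibility inequalities of Definition \ref{admissible} to decide whether the outcome is $\geq 2$, $=1$, or equals the displayed formula.

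For $\Sigma=\Sigma_{r_{b_h}}$ the computation bifurcates according to the position of the linear space $Q^{r_{b_h},sing}_{d_{b_h}}$, of dimension $r_{b_h}$, relative to $L_{n_s}$: by condition (5) of Definition \ref{admissible}, $Q^{r_{b_h},sing}_{d_{b_h}}\subseteq L_{n_s}$ when $r_{b_h}<n_s$ and $L_{n_s}\subseteq Q^{r_{b_h},sing}_{d_{b_h}}$ when $r_{b_h}\geq n_s$. In the first regime the extra singular-locus intersection inserts a genuinely new step that must interact with an existing $L_{n_j}$, the fiber of $\pi$ grows only by Grassmannian factors, and the count leaves $\codim(\pi^{-1}(\Sigma_{r_{b_h}}))\geq 2$ (cases I.B and I.C, the further split tracking whether the affected quadric stays irreducible and whether $r_{b_h}=r_1=x_1$). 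In the second regime $Q^{r_{b_h},sing}_{d_{b_h}}$ lies beyond all the $L_{n_j}$; only one coordinate, essentially a single $Z^h$ together with the $O$-coordinates chained to it, is freed, and it ranges over a space of dimension precisely $\codim(\Sigma_{r_{b_h}})-1$, giving codimension exactly $1$ (cases I.A and I.D). Throughout, the marking and, when $n=2k$, the parity-of-component subtlety of Remark \ref{contained} must be followed so that the correct irreducible component of every orthogonal Grassmannian is used, but this does not alter the dimension count.

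For $\Sigma=\Sigma_{n_{a_g}}$ with $1\leq g\leq t-1$ (case II.B) the modified sequence only replaces $\dim(\Lambda\cap L_{n_{a_g}})=a_g$ by $a_g+1$, leaving the quadric data untouched, so the computation is formally identical to the Grassmannian computation of Section 2 for the loci $\Sigma_{s_l}$; the gap inequality $n_{a_{g+1}}-n_{a_g}-(a_{g+1}-a_g)\geq 1$ forces $\codim(\pi^{-1}(\Sigma_{n_{a_g}}))\geq 2$. The remaining locus $\Sigma_{n_s}$ is where Remark \ref{contained} is essential: when $b_1$ is a special index with $2n_s=r_{b_1}+d_{b_1}$, a general $\Lambda$ of $V$ meets $L_{n_s}$ in the parity opposite to that of the component of $L_{n_s}$, so the first admissible jump is to $\dim(\Lambda\cap L_{n_s})=s+2$; carrying out the dimension bookkeeping with this jump and the matching change in the generic fiber of $\pi$ yields $\codim(\pi^{-1}(\Sigma_{n_s}))=d_{k-s}+x_{k-s}-s-n_s-1$, which is $\geq 1$ by the corank bound (condition (1)) but need not exceed $1$.

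For $\Sigma=\Sigma_{d_{b_h}}$ with $1\leq h\leq u-1$ (item III), raising $\dim(\Lambda\cap Q^{r_{b_h}}_{d_{b_h}})$ to $k-b_h+2$ keeps the relevant quadric irreducible provided $d_{b_h}-r_{b_h}-2\beta_h\geq 3$, the hypothesis recorded in Remark \ref{contained}, so the modified sequence is again admissible; running the same count and invoking the corank bound (condition (3)) gives $\codim(\pi^{-1}(\Sigma_{d_{b_h}}))\geq 2$. The main obstacle in every case is the opening step: verifying that the once-modified sequence still satisfies all conditions of Definition \ref{admissible}---most delicately conditions (6)--(9), which govern the fine positioning of the singular loci and the variation of tangent spaces---and correctly identifying which coordinates of $\widetilde V$ are freed, since an error there shifts every codimension by an integer. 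Once those are settled, the three regimes of the statement follow from Definition \ref{admissible} by the arithmetic just described.
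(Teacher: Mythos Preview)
Your outline follows the paper's own strategy---the Observation really is just a summary of the case-by-case computations I.A--III that follow it---so in broad architecture you are on target. However, your descriptions of two of the cases are wrong in ways that matter.

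First, your account of case II.B is incorrect. You write that for $\Sigma_{n_{a_g}}$ with $g\leq t-1$ the modified sequence ``leav[es] the quadric data untouched'' and that the computation is ``formally identical to the Grassmannian computation of Section 2.'' Neither is true. Because we are assuming $n_{a_g}>r_{k-s}$, Rule \ref{rule1} fires: every sub-quadric $Q^{r_i}_{d_i}$ with $b_1\leq i\leq k-s$ is replaced by $Q^{r_i+(n_{a_g}-r_{k-s})}_{d_i-(n_{a_g}-r_{k-s})}$, and the associated $x_i$ jump by $\alpha_g+1$. Consequently the codimension picks up an additional term $\beta_1(n_{a_g}-\alpha_g-r_{k-s}-1)$ beyond the Grassmannian expression $n_{a_{g+1}}-n_{a_g}-(a_{g+1}-a_g)+1$; it is the positivity of \emph{both} summands that forces $\codim(\pi^{-1}(\Sigma_{n_{a_g}}))\geq 2$. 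Your gap inequality alone would only give $\geq 2$ if the quadric contribution were zero, but you have not argued that.

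Second, your derivation of the displayed formula for $\Sigma_{n_s}$ is misattributed. You present it as coming from the special parity situation of Remark \ref{contained} (the jump to $s+2$). In fact the formula $\codim(\pi^{-1}(\Sigma_{n_s}))=d_{k-s}+x_{k-s}-s-n_s-1$ is computed in case II.A for the \emph{generic} jump $\dim(\Lambda\cap L_{n_s})=s+1$, under the assumption $r_{b_1}<n_{a_t}$: one replaces $Q^{r_{k-s}}_{d_{k-s}}$ by $L_{n_s}$, shifts the $L_\tau$ down by one, and finds that only the $t$-th row of the diagram is freed, with $T^t$ parametrized by $G(\alpha_t,\alpha_t+1)$. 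The special-index jump-by-two case is a separate branch handled later in the algorithm, not the source of this formula. Finally, your sketch of I.A/I.D (``only one coordinate, essentially a single $Z^h$\ldots is freed'') understates I.D, where the $t$-th row (via $T^t\in G(\alpha_t,\alpha_t+1)$) is freed in addition to $Z^1$; both contributions are needed to land exactly on codimension $1$.
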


In the following computation which results in Observation \ref{codimcomp}, each component of the exceptional locus is studied by dividing it into subcases.
\bigskip


\begin{description}[leftmargin=1.5em]

\item[I]
$\Sigma_{r_{b_h}}$ : $\dim(\Lambda \cap Q^{r_{b_h}, sing}_{d_{b_h}}) = x_{b_h}+1$ for some $1 \leq h \leq u$ \\

Given the corank $r_{b_h}$, we divide this case into sub-cases depending on the relation between $r_{b_h}$ and the dimensions $n_{a_g}$ of the isotropic linear spaces appearing in the sequence defining $V$. The sub-cases we consider in the following are:
\begin{description}
\item[I.A] $r_{b_h} > n_s$
\item[I.B] $r_{b_h}<n_s$ and $r_{b_h} \neq n_j$ for all $j$
\item[I.C] $r_{b_h}=n_j$ for some $n_j<n_s$
\item[I.D] $r_{b_h}=n_s$
\end{description}
\bigskip

\item[I.A]
Suppose $r_{b_h}>n_s$. A general element of $\Sigma_{r_{b_h}}$ intersects $Q^{r_{b_h}, sing}_{d_{b_h}}$ in one more dimension. Equivalently, this is the restriction variety associated to the sequence obtained by replacing the sub-quadric $Q^{r_{k-s}}_{d_{k-s}}$ with the isotropic linear space $L_{r_{b_h}}$ in the fixed full flag of dimension $r_{b_h}$. Note that $\Sigma_{r_{b_{h-1}}}$ contains $\Sigma_{r_{b_h}}$, so all $\Sigma_{r_{b_h}}$ with $r_{b_h}>n_s$ are contained in $\Sigma_{r_{b_1}}$. Therefore it is sufficient to consider $\Sigma_{r_{b_1}}$.

\begin{ex}
Let $V$ be the restriction variety given by the sequence $\Big[ L_3 \subseteq Q^7_{10} \subseteq Q^5_{20} \Big]$. The loci $\Sigma_{r_{b_1}}$ and $\Sigma_{r_{b_2}}$ are defined as the closures of the loci:
\[ \Sigma^0_{r_{b_1}} := \left\{ \Lambda \in V \; \big| \; \dim (\Lambda \cap Q^{7, sing}_{10})=2 \; \mbox{with other conditions of $V^0$ unchanged} \right\} \]
\[ \Sigma^0_{r_{b_2}} := \left\{ \Lambda \in V \; \big| \; \dim (\Lambda \cap Q^{5, sing}_{20})=2 \; \mbox{with other conditions of $V^0$ unchanged} \right\} \]
Since $\Sigma_{r_{b_2}}$ is contained in $\Sigma_{r_{b_1}}=\Big[ L_3 \subseteq L_7 \subseteq Q^5_{20} \Big]$, it is sufficient to consider $\codim (\pi^{-1} (\Sigma_{r_{b_1}}) )$.
\end{ex}
\smallskip

The introduced isotropic linear space $L_{r_{b_h}}$ is contained in $Q^{r_i}_{d_i}$ for $b_1 \leq i < k-s$. Therefore, in the resulting restriction variety, the value of $x_i$ increases by one for $b_1 \leq i < k-s$. Thus we have
\begin{align*}
\codim (\Sigma_{r_{b_1}})  & = \big( d_{k-s} + x_{k-s} -2(s+1) \big)  - \big( (r_{b_1}-(s+1))-(\beta_1-1) \big)  \\
& = d_{k-s} -r_{b_1} -\beta_1
\end{align*}
since $x_{k-s}=s$ by our assumption that $r_{b_1}>n_s$.
\smallskip

Now we study the inverse image $\pi^{-1}(\Lambda)$ of a general point $\Lambda$ in $\Sigma_{r_{b_1}}$. By assumption there is no $O$ containing $Q^{r_{b_1}, sing}_{d_{b_1}}$ and $O$'s contained in $Q^{r_{b_1}, sing}_{d_{b_1}}$ are determined uniquely by $\Lambda$. We have $\overline{T^{t+1}, Q^{r_{b_1}, sing}_{d_{b_1}}} \subseteq Z_1 \subseteq Q^{r_{b_1}}_{d_{b_1}}$ where $\dim(\overline{T^{t+1}, Q^{r_{b_1}, sing}_{d_{b_1}}})=r_{b_1}+(k-b_1+1)-(x_{b_1}+1)$ and $\dim Z_1=\dim(\overline{T^{t+1}, Q^{r_{b_1}, sing}_{d_{b_1}}})+1$. Since $Z^1$ has to lie in the orthogonal complement of $\;\overline{T^{t+1}, Q^{r_{b_1}, sing}_{d_{b_1}}}\;$, we have $\;\overline{T^{t+1}, Q^{r_{b_1}, sing}_{d_{b_1}}} \subseteq Z_1 \subseteq Q^{r_{b_1}+(k-b_1+1-x_{b_1}-1)}_{d_{b_1}-(k-b_1+1-x_{b_1}-1)}\;$. Such $Z_1$ can be parameterized by $OG(1, d_{b_1}-r_{b_1}-2(k-b_1+1-x_{b_1}-1))$. Therefore
\begin{align*}
\codim(\pi^{-1}(\Sigma_{r_{b_1}})) & =d_{k-s} - r_{b_1} -\beta_1 - \Big( d_{b_1}-r_{b_1}-2(k-b_1+1-x_{b_1}-1) -2 \Big) \\
& = d_{k-s} - d_{b_1} +2(k-b_1+1-x_{b_1}) -\beta_1 \\
& =1
\end{align*}
since $d_{b_1}-d_{k-s}=\beta_1-1$ and $k-b_1+1-s=\beta_1$.

\begin{ex} Let $V=\Big[ L_3 \subseteq Q^7_{10} \subseteq Q^5_{20} \Big]$, then 
\begin{align*}
\widetilde{V} = \{ (T^1, T^2, T^3, Z^1, Z^2, O^{2, r_{b_1}}) \; | \; & Q^{5, sing}_{20} \subseteq O^{2, r_{b_1}} \subseteq Z^2, \;\; Q^{7, sing}_{10} \subseteq Z^1, \\
& T^1 \subseteq L_3, \;\; T_2 \subseteq O^{2, r_{b_1}} \subseteq Z^1 \subseteq Q^7_{10}, \;\; T^3 \subseteq Z^2 \subseteq Q^5_{20} \; , \}
\end{align*}
equivalently, the diagram is given in Figure \ref{figure10}.
\begin{figure} \caption{$\widetilde{V}$ for $\Big[ L_3 \subseteq Q^7_{10} \subseteq Q^5_{20} \Big]$} \label{figure10}
\begin{equation*}
\input{figure10}
\end{equation*}
\end{figure}

The subvariety $\Sigma_{r_{b_1}}=\Big[ L_3 \subseteq L_7 \subseteq Q^5_{20} \Big] \subseteq V$ has codimension 2. In the inverse image $\pi^{-1}(\Lambda)$ of a general point $\Lambda$ in $\Sigma_{r_{b_1}}$, we have ${T^3=\Lambda}, \;\; {T^2=\Lambda\cap Q^7_{10}=\Lambda\cap L_7}, \;\; {T^1=\Lambda \cap L_3}, \;\; \newline {O^{2, r_{b_1}}=\overline{Q^{5, sing}_{20}, \Lambda\cap Q^7_{10}}}$, ${Z^2=\overline{Q^{5, sing}_{20}, \Lambda}}$ and ${Q^{7, sing}_{10} \; \subseteq \; Z^1 \; \subseteq \; Q^7_{20}}$ where $\dim Z^1=8$. The linear space $Z^1$ is parameterized by a smooth plane quadric, or equivalently, $OG(1, 3)$. Thus $\dim(\pi^{-1}(\Lambda))=1$ and $\codim (\pi^{-1}(\Sigma_{r_{b_1}}))=2-1=1$.
\end{ex}
\medskip

\begin{ex}
Let $V= \Big[ L_1 \subseteq Q^3_6 \subseteq Q^1_8 \Big]$, an orthogonal Schubert variety in $OG(3, 9)$. The diagram in Figure \ref{figure11} defines $\widetilde{V}$.
\begin{figure} \caption{$\widetilde{V}$ for $\Big[ L_1 \subseteq Q^3_6 \subseteq Q^1_8 \Big]$} \label{figure11}
\begin{equation*}
\input{figure11}
\end{equation*}
\end{figure}

The subvariety $\Sigma_{r_{b_1}} = \Big[ L_1 \subseteq L_3 \subseteq Q^1_8 \Big]$ has codimension 2. In the inverse image $\pi^{-1}(\Lambda)$ of a general point $\Lambda$ in $\Sigma_{r_{b_1}}$, only $Z^1$ is not determined uniquely. We have $\dim Z^1=4$ and $Q^{3, sing}_6 \subseteq Z^1 \subseteq Q^3_6$, from which we conclude $Z^1$ is parameterized by $OG(1, 3)$. Thus $\dim(\pi^{-1}(\Lambda))=1$ and $\codim (\pi^{-1}(\Sigma_{r_{b_1}}))=2-1=1$.
\end{ex}
\medskip

\item[I.B] Next we consider $\Sigma_{r_{b_h}}$ such that there are $L_{n_j}$ in the sequence with $r_{b_h}<n_j$ but no $L_{n_j}$ with $n_j=r_{b_h}$. Let $n_{j_\sharp}:= \min\{n_j \; | \; r_{b_h}<n_j \}$. If $r_{b_{h-1}}$ satisfies  $r_{b_h} < r_{b_{h-1}}<n_{j_\sharp}$ then $\Sigma_{r_{b_{h-1}}}$ contains $\Sigma_{r_{b_h}}$. Therefore it is sufficient to consider $r_{b_h}$ such that $r_{b_h}<n_{j_\sharp}<r_{b_{h-1}}$.
\smallskip

For a general element $\Lambda \in \Sigma_{r_{b_h}}$, and a general element $W \in V$, we have $\dim (\Lambda \cap Q^{r_{b_h}}_{d_{b_h}}) = \dim (W \cap Q^{r_{b_h}}_{d_{b_h}}) +1$, and $\dim (\Lambda \cap L_{n_{j_\sharp}}) = \dim (W \cap L_{n_{j_\sharp}})$. Therefore, in the sequence of $\Sigma_{r_{b_h}}$, the isotropic linear space $L_{n_{j_\sharp}}$ is replaced with $L_{r_{b_h}}$, and the sub-quadric $Q^{r_{i_0}}_{d_{i_0}}$, where $r_{i_0}:= \max \{ r_i \leq n_{j_\sharp} \}$, is replaced with $Q^{n_{j_\sharp}}_{d_{i_0}-n_{j_\sharp}+r_{i_0}}$.
\smallskip

This scenario arises in the study of other types of components of the exceptional locus. Here we give the general rule that applies whenever an isotropic linear space is replaced with a smaller dimensional isotropic linear space.

\begin{rul} \label{rule1}
Given the defining sequence of a restriction variety, consider the modified sequence where an isotropic linear space $L_{n_j}$ is replaced with a smaller dimensional isotropic linear space. If there are sub-quadrics $Q^{r_i}_{d_i}$ in the sequence satisfying $r_i < n_j$, then let $r_{i_0}:= \max \{r_i < n_j \}$, and replace $Q^{r_{i_0}}_{d_{i_0}}$ with $Q^{n_j}_{d_{i_0} - (n_j - r_{i_0})}$.
\end{rul}

Since $L_{r_{b_h}}$ is contained in the singular locus of every sub-quadric in the group of $Q^{r_{b_h}}_{d_{b_h}}$, for each of these sub-quadrics, $x_i$ increases by one. Hence we get
\[ \codim (\Sigma_{r_{b_h}})=n_{j_\sharp}-r_{b_h} + n_{j_\sharp}-r_{i_0} - \beta_h . \]

\begin{ex} Let $V=\Big[ L_7 \subseteq Q^5_{15} \subseteq Q^2_{25} \Big]$, then  $\Sigma_{r_{b_1}} = \Big[ L_5 \subseteq Q^7_{13} \subseteq Q^2_{25} \Big]$. Specializing a general element $\Lambda$ of $V$ so that it intersects $L_5$ increases $x_2$ by 1. In this example, $\codim (\Sigma_{r_{b_1}}) =2+2-1= 3$. \end{ex}
\smallskip

Note that the linear space $L_{n_{j_\sharp}}$ may not be among $L_{n_{a_g}}$, that is, the largest dimensional isotropic linear space in a group with consecutively increasing dimensions. Let $L_{n_{a_{g_\sharp}}}$ be the smallest $L_{n_{a_g}}$ containing $L_{n_{j_\sharp}}$. In the inverse image $\pi^{-1}(\Lambda)$ of a general point $\Lambda$ in $\Sigma_{r_{b_h}}$, all coordinates are determined uniquely except for $O^{h, n_{a_{g_\sharp}}}$ and $Z^h$. We have $\overline{Q^{r_{b_h}, sing}_{d_{b_h}}, \Lambda\cap L_{n_{a_{g_\sharp}}}} \subseteq O^{h, n_{a_{g_\sharp}}} \subseteq L_{n_{a_{g_\sharp}}}$ thus $O^{h, n_{a_{g_\sharp}}}$ can be parameterized by $G(1, n_{a_{g_\sharp}}- (r_{b_h}+a_{g_\sharp}-x_{b_h})+1)$. Then $Z^h$ is determined uniquely as $\overline{O^{h, n_{a_{g_\sharp}}}, \Lambda \cap Q^{r_{b_h}}_{d_{b_h}}}$. Thus ${\dim (\pi^{-1}(\Lambda))=n_{a_{g_\sharp}}- (r_{b_h}+a_{g_\sharp}-x_{b_h})}$ and
\begin{align*}
\quad \quad \quad \codim (\pi^{-1}(\Sigma_{r_{b_h}})) &= n_{j_\sharp}-r_{b_h} + n_{j_\sharp}-r_{b_{h-1}+1}-\beta_h- (n_{a_{g_\sharp}}- (r_{b_h}+a_{g_\sharp}-x_{b_h}))\\
& = \Big( (n_{j_\sharp}-r_{b_h}) - (n_{a_{g_\sharp}}- (r_{b_h}+a_{g_\sharp}-x_{b_h})) \Big) + n_{j_\sharp} - r_{b_{h-1}+1}- \beta_h\\
& \geq 2
\end{align*}
since $\Big( (n_{j_\sharp}-r_{b_h}) - (n_{a_{g_\sharp}}- (r_{b_h}+a_{g_\sharp}-x_{b_h})) \Big) \geq 1$ and $n_{j_\sharp} - r_{b_{h-1}+1}- \beta_h \geq 1$ by construction.

\begin{ex} Let $V=\Big[ L_6 \subseteq L_7 \subseteq Q^2_{15} \Big]$, then $\widetilde{V}$ is given by the diagram in Figure \ref{figure12}.
\begin{figure} \caption{$\widetilde{V}$ for $\Big[ L_6 \subseteq L_7 \subseteq Q^2_{15} \Big]$} \label{figure12}
\begin{equation*}
\input{figure12}
\end{equation*}
\end{figure}
The subvariety $\Sigma_{r_{b_1}} = \Big[ L_2 \subseteq L_7 \subseteq Q^2_{15} \Big]$ has codimension 7. In the inverse image $\pi^{-1}(\Lambda)$ of a general point $\Lambda$ in $\Sigma_{r_{b_1}}$, we have $T^2=\Lambda, T^1=\Lambda \cap L_7$. As above, $Z^1$ is determined as $Z^1=\overline{ O^{1, n_{a_1}}, \Lambda}$ so the nontrivial part is the parametrization of $O^{1, n_{a_1}}$. We have $\overline{ Q^{2, sing}_{15}, \Lambda \cap L_7} \subseteq O^{1, n_{a_1}} \subseteq L_7$ which is parameterized by $G(1, 4)$. Thus $\dim(\pi^{-1}(\Lambda))=3$ and $\codim\pi^{-1}(\Sigma_{r_{b_1}})=7-3=4$.
\end{ex}
\medskip

\begin{ex} Let $V=\Big[ L_7 \subseteq Q^5_{15} \subseteq Q^2_{25} \Big]$, then $\widetilde{V}$ is given by the diagram in Figure \ref{figure13}.
\begin{figure} \caption{$\widetilde{V}$ for $\Big[ L_7 \subseteq Q^5_{15} \subseteq Q^2_{25} \Big]$} \label{figure13}
\begin{equation*}
\input{figure13}
\end{equation*}
\end{figure}
The subvariety $\Sigma_{r_{b_1}} = {\Big[ L_5 \subseteq Q^7_{13} \subseteq Q^2_{25} \Big]}$ has codimension 3. In the inverse image $\pi^{-1}(\Lambda)$ of a general point $\Lambda$ in $\Sigma_{r_{b_1}}$, we have $T^3=\Lambda$,  $\;T^2 =\Lambda \cap Q^5_{15}= \Lambda \cap Q^7_{13}$, $\;T^1=\Lambda \cap L_7=\Lambda \cap L_5$, $\;O^{2, n_{a_1}}=\overline{Q^{2, sing}_{25}, \Lambda \cap L_7}$, $\;O^{2, r_{b_1}}={\overline{Q^{2, sing}_{25}, \Lambda \cap Q^5_{15}}}$, $\;Z^2={\overline{Q^{2, sing}_{25}, \Lambda}}$. The linear space $O^{1, n_{a_1}}$ satisfies ${Q^{5, sing}_{15} \; \subseteq \; Q^5_{15} \; \subseteq \; L_7}$ and hence can be parameterized by $G(1, 2)$. Then $Z^1$ is determined uniquely as $Z^1={\overline{ O^{1, n_{a_1}}, \Lambda \cap Q^5_{15}}}$. Thus $\dim(\pi^{-1}(\Lambda))=1$ and $\codim (\pi^{-1}(\Sigma_{r_{b_1}}))=3-1=2$.
\end{ex}

\medskip

\item[I.C] Consider $r_{b_h}$ such that there is $L_{n_j}$ with $n_j < s$ in the defining sequence satisfying $n_j = r_{b_h}$. Since there is no $L_{n_j}$ in the sequence with $n_j=r_{b_h}+1$, we have $r_{b_h}=n_{a_g}$ for some $1 \leq g \leq t-1$.

In the sequence of $\Sigma_{r_{b_h}}$, the isotropic linear space that appears next to $L_{n_{a_g}}$ in the sequence of $V$, namely $L_{n_{(a_g+1)}}$, is replaced with $L_{n_{a_g}}$. Consequently, $L_{n_{a_g}}$ is replaced with the isotropic linear space of one less dimension, namely $L_{n_{a_g}-1}$. Similarly, each isotropic linear space $L_\tau$, where $n_{a_g} - \alpha_g +1 \leq \tau \leq n_{a_g}$, is replaced with $L_{\tau-1}$. Applying Rule \ref{rule1}, the sub-quadric $Q^{r_{i_0}}_{d_{i_0}}$, where $r_{i_0}:= \max \{ r_i \leq n_{a_g+1} \}$, is replaced with $Q^{n_{a_g+1}}_{d_{i_0} - (n_{a_g+1} - r_{i_0})}$. Observe that $x_i$ increases by one for each $i$ satisfying $b_h + \beta_h -1 \geq i \geq b_h$.
\smallskip

Comparing the dimensions of both sequences, we have
\begin{align*}
\codim (\Sigma_{r_{b_1}}) & = \alpha_{g} \left(n_{a_{g}} - a_g \right) +  \alpha_{g +1} \left( n_{a_{g+1}} - a_{g+1} \right) \\
& \quad - (\alpha_g+1) \left( n_{a_g} - a_g -1 \right) - (\alpha_{g+1}-1) \left(n_{a_{g+1}} - a_{g+1} \right) \\
& \quad + (n_{a_g+1}-r_{i_0})-\beta_h .
\end{align*}

In the inverse image $\pi^{-1}(\Lambda)$ of a general point $\Lambda$ in $\Sigma_{r_{b_h}}$, all coordinates are determined uniquely except for $O^{h, n_{a_{g+1}}}$, $Z^h$ and the coordinates in the $g$-th row. We have \newline ${\overline{Q^{r_{b_h}, sing}_{d_{b_h}}, \Lambda\cap L_{n_{a_{g+1}}}} \subseteq O^{h, n_{a_{g+1}}} \subseteq L_{n_{a_{g+1}}}}$ thus $O^{h, n_{a_{g+1}}}$ can be parameterized by \newline ${G(1, n_{a_{g+1}}- (r_{b_h}+a_{g+1}-x_{b_h})+1)}={G(1, n_{a_{g+1}}-n_{a_g}-\alpha_{g+1}+1)}$. Then $Z^h$ is determined uniquely as  ${\overline{O^{h, n_{a_{g+1}}}, \Lambda \cap Q^{r_{b_h}}_{d_{b_h}}}}$. On the other hand, the $g$-th row is determined uniquely once $T^g$ is determined. The linear space $T^g$ satisfies  ${T^{g-1} \subseteq T^g \subseteq \Lambda \cap L_{n_{a_g}}}$ and hence can be parameterized by $G(\alpha_g, \alpha_{g}+1)$. Thus $\dim (\pi^{-1}(\Lambda))={n_{a_{g+1}}-n_{a_g}-\alpha_{g+1} + \alpha_g}$ and
\[ \codim (\pi^{-1}(\Sigma_{r_{b_h}}))=n_{a_g+1}-r_{i_0}- \beta_h +1 \]
which is greater than 1, as $n_{a_g+1}-r_{i_0}- \beta_h \geq 1$ by construction.

\begin{ex}
Let $V=\Big[ L_2 \subseteq L_4 \subseteq Q^2_7 \big]$, an orthogonal Schubert variety in $OG(3, 9)$. The definition of $\widetilde{V}$ is given by the diagram in Figure \ref{figure14}.
\begin{figure} \caption{$\widetilde{V}$ for $\Big[ L_2 \subseteq L_4 \subseteq Q^2_7 \Big]$} \label{figure14}
\begin{equation*}
\input{figure14}
\end{equation*}
\end{figure}

The subvariety $\Sigma_{r_{b_1}}$ is given by the sequence $\Big[ L_1 \subseteq L_2 \subseteq L_4 \Big]$ as $Q^2_7$ becomes $L_4$ if its corank is increased by 2. The variety $\Sigma_{r_{b_1}}$ has codimension 4. In the inverse image $\pi^{-1} (\Lambda)$ of a general point $\Lambda$ in $\Sigma_{r_{b_1}}$, the coordinates $T^3$ and $O^1$ are determined uniquely as $T^3=O^1=\Lambda$ and $T^2=L_2$. The coordinate $T^1$ satisfies $T^1 \subseteq L_2$ and is parameterized by $G(1, 2)$. The coordinate $Z^1$ satisfies $O^1 \subseteq Z^1 \subseteq Q^3_7$ and is parameterized by $OG(1, 3)$. Thus $\dim(\pi^{-1}(\Lambda))=2$ and $\codim (\pi^{-1}(\Sigma_{r_{b_1}}))=2$.
\end{ex}

\begin{ex} Let $V= \Big[ L_5 \subseteq L_{10} \subseteq Q^6_{19} \subseteq Q^5_{20} \subseteq Q^2_{30} \Big]$, then $\widetilde{V}$ is given by the diagram in Figure \ref{figure15}.
\begin{figure} \caption{$\widetilde{V}$ for $\Big[ L_5 \subseteq L_{10} \subseteq Q^6_{19} \subseteq Q^5_{20} \subseteq Q^2_{30} \Big]$} \label{figure15}
\begin{equation*}
\input{figure15}
\end{equation*}
\end{figure}
The subvariety $\Sigma_{r_{b_1}} = {\Big[ L_4 \subseteq L_5 \subseteq Q^{10}_{15} \subseteq Q^9_{16} \subseteq Q^2_{30} \Big]}$ has codimension 12. In the inverse image $\pi^{-1}(\Lambda)$ of a general point $\Lambda$ in $\Sigma_{r_{b_1}}$, we have $T^4=\Lambda$, $\;T^3=\Lambda \cap Q^9_{16}= \Lambda \cap Q^5_{20}$, $\;T^2 =\Lambda \cap L_5=\Lambda \cap L_{10}$, $\;O^{2, n_{a_2}}={\overline{ Q^{2, sing}_{30}, \Lambda \cap L_{10}}}$, $\;O^{2, r_{b_1}}={\overline{ Q^{2, sing}_{30}, \Lambda \cap Q^5_{20} }}$, $\;Z^2={\overline{ Q^{2, sing}_{30}, \Lambda}}$. The linear space $O^{1, n_{a_2}}$ satisfies ${\overline{ Q^{5, sing}_{20}, \Lambda \cap L_{10}} \; \subseteq \; O^{1, n_{a_2}} \; \subseteq \; L_{10}}$ and hence can be parameterized by $G(1, 5)$. Then $Z^1$ is determined uniquely as $Z^1={\overline{O^{1, n_{a_2}}, \Lambda\cap Q^5_{20}}}$. On the other hand, $T^1$ satisfies ${T^1 \subseteq \Lambda \cap L_5}$ and hence can be parameterized by $G(1, 2)$. Then $ O^{2, n_{a_1}}={\overline{ Q^{2, sing}_{30}, T^1}}$. Thus $\dim(\pi^{-1}(\Lambda))=5$ and $\codim (\pi^{-1}(\Sigma_{r_{b_1}}))=12-5=7$.
\end{ex}

\medskip

\item[I.D] Suppose $r_{b_h}=n_s$. Note that $\Sigma_{r_{b_{h-1}}}$ contains $\Sigma_{r_{b_h}}$, so all $\Sigma_{r_{b_h}}$ are contained in $\Sigma_{r_{b_1}}$. Therefore it is sufficient to consider $\Sigma_{r_{b_1}}$.
\smallskip

In the sequence of $\Sigma_{r_{b_1}}$, the sub-quadric $Q^{r_{k-s}}_{d_{k-s}}$ is replaced with $L_{n_{a_t}}$, and consequently each isotropic linear space $L_\tau$, where $n_{a_t }- \alpha_t +1 \leq \tau \leq n_{a_t}$, is replaced with the isotropic liner space of one less dimension, $L_{\tau-1}$. This increases the value of $x_i$ by one for $i$ satisfying $b_1 \leq i < k-s$. We have
\begin{align*}
\codim\Sigma_{r_{b_1}} & = \alpha_t \left( n_{a_t} - a_t \right) + \sum_{t=1}^{\beta_1} \left( d_{b_1} + x_{b_1} - 2(s+\beta_1) + t-1 \right) \\
& \quad - (\alpha_t + 1) \left( n_{a_t} - a_t -1 \right) - \sum_{t=1}^{\beta_1-1} \left( d_{b_1} + x_{b_1} - 2(s+\beta_1) + t-1 \right) - (\beta_1-1) \\
& = \alpha_t + d_{b_1}  - n_s  - 2\beta_1 +1
\end{align*}

Note that the resulting sequence may contradict condition (9) in Definition \ref{admissible}. The sub-quadric with maximal corank that is smaller than $r_{b_1}$, namely $Q^{r_{b_2}+\beta_2-1}_{d_{b_2}-\beta_2+1}$, may have corank one less than the dimension the introduced isotropic linear space, namely, $L_{n_{a_t}-\alpha_t}$. We remedy this by replacing this sub-quadric with one with larger corank which reflects the geometry of the resulting restriction variety better. Explicitly, if $n_{a_t}-\alpha_t = r_{b_2}+\beta_2$, we replace the sub-quadric $Q^{r_{b_2}+\beta_2-1}_{d_{b_2}-\beta_2+1}$ with $Q^{n_{a_t}}_{d_{b_2}+r_{b_2}-n_{a_t}}$. The changes in the dimension and the value of $x_i$ cancel each other, hence we get the same codimension computation.
\smallskip

This scenario arises in the study of other types of components of the exceptional locus. Here we give the general rule that applies whenever a sub-quadric is replaced with an isotropic linear space.

\begin{rul} \label{rule2}
Given the defining sequence of a restriction variety, consider the modified sequence where a sub-quadric is replaced with an isotropic linear space. If $n_j - r_i =1$ for an isotropic linear space $L_{n_j}$, and a sub-quadric $Q^{r_i}_{d_i}$ in the modified sequence, then let $n_{a_{g_0}}:= \min \{ n_{a_g} \geq n_j \}$, and replace $Q^{r_i}_{d_i}$ with $Q^{n_{a_{g_0}}}_{d_i - (n_{a_{g_0}} - r_i)}$.
\end{rul}

We again look at the fibers of $\pi$. By assumption there is no $O$ containing $Q^{r_{b_1}}_{d_{b_1}}$ and other $O$'s are determined uniquely as there is no change in the relevant rank conditions. The only nontrivial parameterizations are observed for $Z^1$ and the coordinates in the $t$-th row. As in (I.A), we have ${\overline{T^{t+1}, Q^{r_{b_1}, sing}_{d_{b_1}}} \; \subseteq \; Z_1 \; \subseteq \; Q^{r_{b_1}}_{d_{b_1}}}$ where ${\dim(\overline{T^{t+1}, Q^{r_{b_1}, sing}_{d_{b_1}}})}={r_{b_1}+(k-b_1+1)-(x_{b_1}+1)}$ and ${\dim (Z_1)}={\dim(\overline{T^{t+1}, Q^{r_{b_1}, sing}_{d_{b_1}}})+1}$. Since $Z^1$ has to lie in the orthogonal complement of $\;{\overline{T^{t+1}, Q^{r_{b_1}, sing}_{d_{b_1}}}}$, we have  $\;{\overline{T^{t+1}, Q^{r_{b_1}, sing}_{d_{b_1}}} \; \subseteq \; Z_1 \; \subseteq \; Q^{r_{b_1}+(k-b_1+1-x_{b_1}-1)}_{d_{b_1}-(k-b_1+1-x_{b_1}-1)}}$. Such $Z_1$ can be parameterized by  $OG(1, d_{b_1}-r_{b_1}-2(k-b_1+1-x_{b_1}-1))$. On the other hand, the $t$-th row can be determined once $T^1$ is determined. The linear space $T^1$ satisfies ${T^t \subseteq \Lambda \cap L_{n_{a_t}}}$ and hence can be parameterized by $G(\alpha_t, \alpha_t+1)$. Thus $\dim (\pi^{-1}(\Lambda))= {d_{b_1}-r_{b_1}-2(k-b_1+1-x_{b_1}-1)-2+\alpha_t}$ and we have
\[ \codim (\pi^{-1}(\Sigma_{r_{b_1}}))=1 .\]

\begin{ex}
Let $V = \Big[ L_2 \subseteq L_3 \subseteq Q^3_7 \Big]$, an orthogonal Schubert variety in $OG(3, 9)$. The diagram in Figure \ref{figure16} defines $\widetilde{V}$.
\begin{figure} \caption{$\widetilde{V}$ for $ \Big[ L_2 \subseteq L_3 \subseteq Q^3_7 \Big]$} \label{figure16}
\begin{equation*}
\input{figure16}
\end{equation*}
\end{figure}

The subvariety $\Sigma_{r_{b_1}} = \Big[ L_1 \subseteq L_2 \subseteq L_3 \Big]$, which consists of a single point, has codimension 4. In the inverse image $\pi^{-1}(\Lambda)$ of a general point $\Lambda$ in $\Sigma_{r_{b_1}}$, we have $T^1 \subseteq L_3$ which is parameterized by $G(2, 3)$. Also, $\dim (Z^1) = 4$ with $Q^{3, sing}_6 \subseteq Z^1 \subseteq Q^3_6$, so $Z^1$ is parameterized by $OG(1, 3)$ which has dimension 1. Thus $\dim(\pi^{-1}(\Lambda))=3$ and $\codim (\pi^{-1}(\Sigma_{r_{b_1}}))=1$.
\end{ex}
\medskip

\begin{ex} Let $V= \Big[ L_5 \subseteq  Q^5_{10} \subseteq Q^2_{30} \Big]$, then $\widetilde{V}$ is given by the diagram diagram in Figure \ref{figure17}.
\begin{figure} \caption{$\widetilde{V}$ for $\Big[ L_5 \subseteq  Q^5_{10} \subseteq Q^2_{30} \Big]$} \label{figure17}
\begin{equation*}
\input{figure17}
\end{equation*}
\end{figure}
The subvariety $\Sigma_{r_{b_1}} = \Big[ L_4 \subseteq L_5 \subseteq Q^2_{30} \Big]$ has codimension 7. In the inverse image $\pi^{-1}(\Lambda)$ of a general point $\Lambda$ in $\Sigma_{r_{b_1}}$, we have $T^3=\Lambda$, $\;T^2=\Lambda \cap L_5=\Lambda\cap Q^5_{10}$, $\;O^{2, r_{b_1}}={\overline{ Q^{2, sing}_{30}, \Lambda \cap Q^5_{10}}}$, $\;Z^2={\overline{Q^{2, sing}_{30}, \Lambda}}$. We have $Q^{2, sing}_{30} \subseteq O^{2, n_{a_1}} \subseteq L_5$ which can be parameterized by $G(1, 3)$. Then the linear space $T^1$ which satisfies $T^1\subseteq O^{2, n_{a_1}}$ can be parameterized by $G(1, 3)$. On the other hand, $Z^1$ satisfies $Q^{5, sing}_{10} \subseteq Z^1 \subseteq Q^{5}_{10}$ and hence can be parameterized by $OG(1, 5)$. Thus $\dim(\pi^{-1}(\Lambda))=6$ and $\codim (\pi^{-1}(\Sigma_{r_{b_1}}))=1$.
\end{ex}
\medskip


\item[II]
$\Sigma_{n_{a_g}}$ : $\dim(\Lambda \cap L_{n_{a_g}}) = a_g +1$ for some $1 \leq g \leq t$ \\

Depending on $n_{a_g}$, we divide this case into the following two subcases:
\begin{description}
\item[II.A] $g=t$
\item[II.B] $g<t$
\end{description}
\bigskip

\item[II.A] 
$\Sigma_{n_{a_t}}$ : $\dim(\Lambda \cap L_{n_{a_t}}) =a_t+1$ (or equivalently, $\dim(\Lambda \cap L_{n_s})=s+1$) \\

If $r_{b_1}= n_{a_t}$, then $\Sigma_{n_{a_t}}$ corresponds to $\Sigma_{r_{b_1}}$. If $r_{b_1}>n_{a_t}$ then $\Sigma_{r_{b_1}}$ contains $\Sigma_{n_{a_t}}$.  So we assume $r_{b_1}<n_{a_t}$ in the following. In the sequence of $\Sigma_{n_{a_t}}$, the sub-quadric $Q^{r_{k-s}}_{d_{k-s}}$ is replaced with the isotropic linear space $L_{n_{a_t}}$. Consequently, each isotropic linear space $L_\tau$, where $n_{a_t} - \alpha_t +1 \leq \tau \leq n_{a_t}$, is replaced with $L_{\tau -1}$. We have
\begin{align*}
\codim (\Sigma_{r_{b_1}}) & = \alpha_t \left( n_{a_t} - a_t \right) + \sum_{t=1}^{\beta_1} \left( d_{b_1} + x_{b_1} - 2(s+\beta_1) + t-1 \right) \\
& \quad - (\alpha_t + 1) \left( n_{a_t} - a_t -1 \right) - \sum_{t=1}^{\beta_1-1} \left( d_{b_1} + x_{b_1} - 2(s+\beta_1) + t-1 \right)  \\
& = \alpha_t + d_{b_1} + x_{b_1} -s - n_{a_t}  - \beta_1
\end{align*}
The only nontrivial parameterizations in the inverse image $\pi^{-1}(\Lambda)$ of a general point $\Lambda$ in $\Sigma_{n_{a_t}}$ are in the row of $T^t$ and once $T^t$ is fixed, the rest of the row can be determined uniquely. The linear space $T^t$ satisfies ${T^{t-1} \subseteq T^t \subseteq \Lambda \cap L_{n_{a_t}}}$ and hence can be parameterized by $G(\alpha_t, \alpha_t+1)$. Thus we have
\begin{equation*}
\codim (\pi^{-1}(\Sigma_{n_{a_t}}))  = d_{b_1} + x_{b_1} -s - n_{a_t}  - \beta_1
\end{equation*}
Since $d_{b_1} - \beta_1 +1 = d_{k-s}$, this is equivalent to
\[ \codim (\pi^{-1}(\Sigma_{n_s}))  = d_{k-s} + x_{k-s} -s - n_s  - 1 \; . \]
Note that $\codim (\pi^{-1}(\Sigma_{n_{a_t}}))$ may be 1 or larger in this case.

\begin{ex} Let $V=\Big[ L_5 \subseteq Q^2_8 \Big]$, then $\widetilde{V}$ is given by the diagram in Figure \ref{figure18}.
\begin{figure} \caption{$\widetilde{V}$ for $\Big[ L_5 \subseteq Q^2_8 \Big]$} \label{figure18}
\begin{equation*}
\input{figure18}
\end{equation*}
\end{figure}

The subvariety $\Sigma_{n_{a_1}}=\Big[ L_4 \subseteq L_5 \Big]$ has codimension 2. In the inverse image $\pi^{-1}(\Lambda)$ of a general point $\Lambda$ in $\Sigma_{n_{a_1}}$, we have $T^2=\Lambda$ and $Z^1={\overline{ \Lambda, Q^{2, sing}_8}}$. The linear space $T^1$ satisfies ${T^1 \subseteq \Lambda \cap L_5}$ and hence can be parameterized by $G(1, 2)$. Then $O^{1, n_{a_1}}$ is determined uniquely as $O^{1, n_{a_1}}={\overline{Q^{2, sing}_8, T^1}}$. Thus $\dim(\pi^{-1}(\Lambda))=1$ and $\codim (\pi^{-1}(\Sigma_{r_{b_1}}))=2-1=1$.
\end{ex}
\medskip

\begin{ex}
Let $V = \Big[ L_4 \subseteq Q^1_8 \Big]$, an orthogonal Schubert variety in $OG(2, 9)$. The diagram in Figure \ref{figure19} gives the definition of $\widetilde{V}$.
\begin{figure} \caption{$\widetilde{V}$ for $\Big[ L_4 \subseteq Q^1_8 \Big]$} \label{figure19}
\begin{equation*}
\input{figure19}
\end{equation*}
\end{figure}

The subvariety $\Sigma_{n_{a_1}}=\Big[ L_4 \subseteq L_5 \Big]$ has codimension 3. In the inverse image $\pi^{-1}(\Lambda)$ of a general point $\Lambda$ in $\Sigma_{n_{a_1}}$, we have $T^2=\Lambda$ and $Z^1={\overline{ \Lambda, Q^{1, sing}_8}}$. The linear space $T^1$ satisfies ${T^1 \subseteq \Lambda \cap L_4}$ and hence can be parameterized by $G(1, 2)$. Then $O^{1, n_{a_1}}$ is determined uniquely as $O^{1, n_{a_1}}={\overline{Q^{1, sing}_8, T^1}}$. Thus $\dim(\pi^{-1}(\Lambda))=1$ and $\codim (\pi^{-1}(\Sigma_{r_{b_1}}))=3-1=2$.

\end{ex}
\medskip


\item[II.B]
$\Sigma_{n_{a_g}}$ : $\dim(\Lambda \cap L_{n_{a_g}}) = a_g +1$ for some $1 \leq g \leq t-1$ \\

We have already discussed in I.C the case when there is some $Q^{r_{b_h}}_{d_{b_h}}$ in the defining sequence with $r_{b_h}=n_{a_g}$. Also, if there is $Q^{r_{b_h}}_{d_{b_h}}$ in the sequence with $r_{b_h} > n_{a_g}$ then $\Sigma_{n_{a_g}}$ will be contained in $\Sigma_{r_{b_h}}$. So it is sufficient to consider the case when $n_{a_g} > r_{b_h}$ for all $1 \leq h \leq u$, equivalently, when $n_{a_g}>r_{k-s}$.

In the sequence of $\Sigma_{n_{a_g}}$, the isotropic linear space that comes after $L_{n_{a_g}}$ in the sequence of $V$, namely $L_{n_{a_g}+1}$, is replaced with $L_{n_{a_g}}$. Consequently, each isotropic linear space $L_\tau$, where, $n_{a_g}-\alpha_g+1 \leq \tau \leq n_{a_g}$, is replaced with the isotropic linear space of one less dimension, $L_{\tau-1}$. Rule \ref{rule1} applies to the sub-quadric $Q^{r_{b_1}}_{d_{b_1}}$, and consequently to each $Q^{r_i}_{d_i}$ where $b_1 \leq i \leq k-s$; we replace $Q^{r_i}_{d_i}$ with $Q^{r_i+ (n_{a_g} - r_{k-s})}_{d_i- (n_{a_g} - r_{k-s})}$ for all $i$ with $b_1 \leq i \leq k-s$. This increases the value of $x_i$ by $\alpha_g +1$ for all $i$ with $b_1 \leq i \leq k-s$. We have

\begin{align*}
\codim(\Sigma_{n_{a_g}}) & = \alpha_g \left(n_{a_g} - a_g \right) + \alpha_{g+1} \left( n_{a_{g+1}} - a_{g+1} \right) \\
& \quad - ( \alpha_g+1) \left(n_{a_g} - a_g -1 \right) - (\alpha_{g+1}-1) \left( n_{a_{g+1}} - a_{g+1} \right) \\
& \quad + \beta_1(n_{a_g} - r_{k-s}) - \beta_1(\alpha_g +1)
\end{align*}

The only nontrivial parameterizations in the inverse image $\pi^{-1}(\Lambda)$ of a general point $\Lambda$ in $\Sigma_{n_{a_g}}$ are in the $g$-th row of the diagram of $\widetilde{V}$ and once $T^g$ is parameterized the remaining coordinates can be determined uniquely. The linear space $T^g$ satisfies ${T^{g-1} \subseteq T^g \subseteq L_{n_{a_g}}}$ and hence can be parameterized by the Grassmannian $G(\alpha_g, \alpha_g+1)$. Thus we have
\[ \codim (\pi^{-1}(\Sigma_{n_{a_g}})) = n_{a_{g+1}}-n_{a_g}-(a_{g+1}-a_g) +1 +\beta_1(n_{a_g}-\alpha_g-r_{k-s} -1)  .\]
Note that $n_{a_{g+1}}-n_{a_g}\geq  a_{g+1}-a_g+1$ and $n_{a_g}-\alpha_g  \geq k-s+1$ by assumption. Therefore $\codim (\pi^{-1}(\Sigma_{n_{a_g}})) \geq 2$ in this case.

\begin{ex}
Let $V = \Big[ L_2 \subseteq L_4 \subseteq Q^0_9 \Big]$, an orthogonal Schubert variety on $OG(3, 9)$. The diagram in Figure \ref{figure20} gives the definition of $\widetilde{V}$.
\begin{figure} \caption{$\widetilde{V}$ for $\Big[ L_2 \subseteq L_4 \subseteq Q^0_9 \Big]$} \label{figure20}
\begin{equation*}
\input{figure20}
\end{equation*}
\end{figure}

The subvariety $\Sigma_{n_{a_1}}=\Big[ L_1 \subseteq L_2 \subseteq Q^2_7 \Big]$ has codimension 3. In the inverse image $\pi^{-1}(\Lambda)$ of a general point $\Lambda$ in $\Sigma_{n_{a_1}}$, the coordinates $T^3$ and $T^2$ are determined uniquely as $T^3 = \Lambda$ and $T^2 = \Lambda  \cap L_4$. The coordinate $T^1$ satisfies $T^1 \subseteq L_2$ and hence is parameterized by $G(1, 2)$. Thus $\dim(\pi^{-1}(\Lambda))=1$ and $\codim (\pi^{-1}(\Sigma_{r_{b_1}}))=3-1=2$.
\end{ex}
\medskip

\begin{ex} Let $V=\Big[ L_5 \subseteq L_7 \subseteq Q^3_{20} \Big]$, then $\widetilde{V}$ is given by the diagram Figure \ref{figure21}.
\begin{figure} \caption{$\widetilde{V}$ for $\Big[ L_5 \subseteq L_7 \subseteq Q^3_{20} \Big]$} \label{figure21}
\begin{equation*}
\input{figure21}
\end{equation*}
\end{figure}
The subvariety $\Sigma_{n_{a_1}}=\Big[ L_4 \subseteq L_5 \subseteq Q^5_{18} \Big]$ has codimension 3. In the inverse image $\pi^{-1}(\Lambda)$ of a general point $\Lambda$ in $\Sigma_{n_{a_1}}$, we have $T^3=\Lambda$, $T^2=\Lambda \cap L_5=\Lambda \cap L_7$, $Z^1 = {\overline{ Q^{3, sing}_{20}, \Lambda}}$ and $O^{1, n_{a_2}}={\overline{Q^{3, sing}_{20}, \Lambda\cap L_7}}$. The linear space $T^1$ satisfies $T^1 \subseteq L_5 \cap \Lambda$ and hence can be parameterized by $G(1, 2)$. Then $O^{1, n_{a_1}}$ is determined uniquely as $O^{1, n_{a_1}}={\overline{Q^{3, sing}_{20}, T^1}}$. Thus $\dim(\pi^{-1}(\Lambda))=1$ and $\codim (\pi^{-1}(\Sigma_{r_{b_1}}))=3-1=2$.
\end{ex}

\medskip

\item[III]
$\Sigma_{d_{b_h}}$ : $\dim(\Lambda \cap Q^{r_{b_h}}_{d_{b_h}}) = k-b_h +2$ for some $1 \leq h \leq u-1$ \\

This case is similar to the case in II.A. In the sequence of $\Sigma_{d_{b_h}}$, the sub-quadric that comes after $Q^{r_{b_h}}_{d_{b_h}}$ in the sequence of $V$, namely $Q^{r_{b_{h+1}}+\beta_{h+1}-1}_{d_{b_{h+1}}-\beta_{h+1}+1}$, is replaced with $Q^{r_{b_h}}_{d_{b_h}}$. Consequently, each sub-quadric $Q^\rho_\delta$, where $d_{b_h}-\beta_+1 \leq \delta \leq d_{b_h}$ and $r_{b_h} +\beta_h -1 \geq \rho \geq r_{b_h}$, is replaced with $Q^{\rho+1}_{\delta-1}$. Comparing the dimensions of the sequences, we have

\begin{align*}
\codim (\Sigma_{d_{b_h}}) & = \sum_{t=1}^{\beta_h} \left( d_{b_h} + x_{b_h} -2(k-b_h+1) + t-1 \right) \\
& \quad \quad+ d_{b_{h+1}} + x_{b_{h+1}} - 2(k-b_{h+1}+1) + \beta_{h+1} -1\\
& \quad \quad - \sum_{t=1}^{\beta_h+1} \left( d_{b_h} + x_{b_h} - 2(k-b_h+1) + t-1 \right) \\
& = d_{b_{h+1}} + x_{b_{h+1}} - 2(k-b_{h+1}+1)  \\
& \quad\quad - \left( d_{b_h} + x_{b_h} - 2(k-b_h+1) \right)  + \beta_{h+1} + \beta_h +1
\end{align*}

Note that the resulting sequence may contradict condition (9) in Definition \ref{admissible}. Here we give the general rule for remedying this in a general context that applies whenever a sub-quadric is replaced with another sub-quadric.

\begin{rul} \label{rule3}
Given the defining sequence of a restriction variety, consider the modified sequence where a sub-quadric is replaced with another sub-quadric. If $n_j -r_i =1$ for an isotropic linear space $L_{n_j}$ and a sub-quadric $Q^{r_i}_{d_i}$, then let $r_{b_{h_0}} := \max \{ r_{b_h} \leq r_i \}$, and replace $L_{n_j}$ with $L_{r_{b_{h_0}}}$.
\end{rul}

The only nontrivial parameterizations in the inverse image of a general point $\Lambda$ in $\Sigma_{d_{b_h}}$ are in the row of $T^{k-b_h+1}$ and once $T^{k-b_h+1}$ is fixed, the rest of the row can be determined uniquely. The linear space $T^{k-b_h+1}$ satisfies $\Lambda \cap Q^{r_{b_{h-1}}}_{d_{b_{h-1}}} \subseteq T^{k-b_h+1} \subseteq \Lambda \cap Q^{r_{b_h}}_{d_{b_h}}$, and hence can be parameterized by $G(\beta_h, \beta_h+1)$. Thus we have
\begin{align*}
\codim (\pi^{-1}(\Sigma_{d_{b_h}})) & = d_{b_{h+1}} + x_{b_{h+1}} - 2(k-b_{h+1}+1) \\
& \quad\quad- \left( d_{b_h} + x_{b_h} - 2(k-b_h+1) \right) + \beta_{h+1} +1 \\
\end{align*}
which is larger than one by the definition of $\beta_{h+1}$.

\begin{ex} Let $V=\Big[ Q^2_7 \subseteq Q^0_9 \Big]$, then $\widetilde{V}$ is given by the diagram Figure \ref{figure22}.
\begin{figure} \caption{$\widetilde{V}$ for $\Big[ Q^2_7 \subseteq Q^0_9 \Big]$} \label{figure22}
\begin{equation*}
\input{figure22}
\end{equation*}
\end{figure}

The subvariety $\Sigma_{d_{b_1}}=\Big[ Q^3_6 \subseteq Q^2_7 \Big]$ has codimension 3. In the inverse image $\pi^{-1}(\Lambda)$ of a general point $\Lambda$ in $\Sigma_{d_{b_1}}$, we have $T^2=\Lambda$ and $Z^2={\overline{ \Lambda, Q^{2, sing}_7}}$. The linear space $T^1$ satisfies ${T^1 \subseteq \Lambda \cap Q^2_7}$ and hence can be parameterized by $G(1, 2)$. Then $O^{1, r_{b_1}}$ is determined uniquely as $O^{1, r_{b_1}}={\overline{Q^{2, sing}_7, T^1}}$. Thus $\dim(\pi^{-1}(\Lambda))=1$ and $\codim (\pi^{-1}(\Sigma_{r_{b_1}}))=3-1=2$.
\end{ex}

\vspace{1em}
\end{description}

\bigskip

This concludes the computation behind Observation \ref{codimcomp}. The following lemma, which is based on Lemma \ref{coslem}, allow us to give a partial description of the singular locus of $V$.
\smallskip

\begin{lemma} \label{exclem}
A subvariety $\Sigma$ of a restriction variety $V$ satisfying $\codim(\pi^{-1}(\Sigma))>1$ is in the singular locus of $V$.
\end{lemma}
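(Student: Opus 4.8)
The plan is to deduce this statement from Lemma \ref{coslem} (Coskun's lemma) applied to the resolution $\pi : \widetilde{V}(L_\bullet, Q_\bullet, m_\bullet) \to V(L_\bullet, Q_\bullet, m_\bullet)$ constructed in Theorem \ref{resthm}. To invoke Lemma \ref{coslem} I need three hypotheses: that $\widetilde V$ is smooth and projective, that $V$ is normal and projective, and that $\pi$ is birational and an isomorphism in codimension one. Smoothness, irreducibility and projectivity of $\widetilde V$ are Proposition \ref{prop2}; that $\pi$ is birational (indeed an isomorphism over the dense homogeneous open cell $V^0$) is Theorem \ref{resthm}. Normality of restriction varieties is known from \cite{coskun1}, so I would cite that. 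The remaining ingredient — that $\pi$ is an isomorphism in codimension one — is exactly the content of Observation \ref{codimcomp}: the exceptional locus of $\pi$ has image equal to the union of the loci $\Sigma_{r_{b_h}}$, $\Sigma_{n_{a_g}}$, $\Sigma_{d_{b_h}}$, and the preimage of each such $\Sigma$ has codimension $\geq 1$ in $\widetilde V$, so no component of the exceptional locus is a divisor.

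The argument then runs as follows. Let $\Sigma \subseteq V$ be a subvariety with $\codim(\pi^{-1}(\Sigma)) > 1$. First I observe that $\Sigma$ must be contained in the image of the exceptional locus: over $V^0$ the map $\pi$ is an isomorphism, so if $\Sigma$ met $V^0$ in a dense subset then $\pi^{-1}(\Sigma)$ would have the same codimension as $\Sigma$, and more relevantly the generic point of $\Sigma$ would have zero-dimensional (singleton) fiber; I want instead to locate the singular points, so I should phrase things pointwise. Concretely: since $\widetilde V$ and $V$ have the same dimension and $\codim(\pi^{-1}(\Sigma)) > 1$ while $\pi|_{\pi^{-1}(\Sigma)} \to \Sigma$ is surjective, a dimension count gives $\dim \pi^{-1}(\Sigma) = \dim \widetilde V - \codim(\pi^{-1}(\Sigma)) < \dim V - 1$; combined with $\pi(\pi^{-1}(\Sigma)) = \Sigma$ this forces $\pi$ to have positive-dimensional fibers over a dense subset of $\Sigma$, hence over the generic point of every component of $\Sigma$. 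By upper-semicontinuity of fiber dimension, $\pi^{-1}(p)$ is positive-dimensional for every $p \in \Sigma$. Lemma \ref{coslem} then applies verbatim: $p \in V$ is singular if and only if $\pi^{-1}(p)$ is positive-dimensional, so every point of $\Sigma$ is a singular point of $V$, i.e. $\Sigma \subseteq V^{sing}$.

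I should be a little careful about the hypothesis "isomorphism in codimension one" in Lemma \ref{coslem}: it requires that the locus where $\pi$ fails to be an isomorphism has codimension $\geq 2$ in $\widetilde V$ (equivalently in $V$, since $\pi$ is birational and both are normal). This is precisely Observation \ref{codimcomp} together with the enumeration in Section 5's case analysis (I.A--I.D, II.A--II.B, III): the exceptional locus of $\pi$ breaks into the pieces $\pi^{-1}(\Sigma_{r_{b_h}})$, $\pi^{-1}(\Sigma_{n_{a_g}})$, $\pi^{-1}(\Sigma_{d_{b_h}})$, and each of these was shown to have codimension at least $1$ in $\widetilde V$ — and in fact the only ones of codimension exactly $1$ are among the $\Sigma_{r_{b_h}}$ with $r_{b_h} \geq n_s$ and $\Sigma_{n_s}$, which nonetheless have $\codim \geq 1$, so the exceptional divisor components are genuinely divisorial only on $\widetilde V$ while on $V$ the image drops codimension — wait, this needs rephrasing. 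The clean statement I actually need is: $\pi$ contracts no divisor, equivalently every component of the exceptional locus of $\pi$ in $\widetilde V$ has $\codim \geq 1$, which is immediate, but for "isomorphism in codimension one on $V$" I need that the non-isomorphism locus in $V$ has $\codim \geq 2$; since the exceptional components with $\codim_{\widetilde V} = 1$ map to loci of strictly smaller dimension in $V$ (they have positive-dimensional fibers), their images have $\codim_V \geq 2$, and the higher-codimension exceptional components have images of codimension $\geq 2$ as well. That bookkeeping — distinguishing codimension in $\widetilde V$ from codimension in $V$ — is the one genuinely delicate point, and the main obstacle; everything else is a direct appeal to Lemma \ref{coslem}, Proposition \ref{prop2}, Theorem \ref{resthm} and normality from \cite{coskun1}. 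I would handle it by noting that for each exceptional component $E$ with $\codim_{\widetilde V} E = 1$, the general fiber of $\pi|_E$ is positive-dimensional (from the explicit computations), so $\dim \pi(E) \leq \dim E - 1 = \dim V - 2$, giving $\codim_V \pi(E) \geq 2$ as required.
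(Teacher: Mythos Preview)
Your approach has a genuine gap: Lemma \ref{coslem} cannot be invoked here, because its hypothesis that $\pi$ be an isomorphism in codimension one is \emph{false} for the resolution of restriction varieties. ``Isomorphism in codimension one'' means the exceptional locus has codimension $\geq 2$ in the source $\widetilde V$, and Observation \ref{codimcomp} explicitly records that certain components of the exceptional locus --- namely $\pi^{-1}(\Sigma_{r_{b_h}})$ with $r_{b_h}\geq n_s$, and $\pi^{-1}(\Sigma_{n_s})$ when $d_{k-s}+x_{k-s}-s-n_s=2$ --- have codimension exactly $1$ in $\widetilde V$. You noticed this yourself (``wait, this needs rephrasing''), but your attempted fix, passing to codimension in $V$, does not help: having $\codim_V \pi(E)\geq 2$ for every exceptional component $E$ is strictly weaker than the hypothesis of Lemma \ref{coslem}, and in fact the presence of a divisorial exceptional component in $\widetilde V$ is precisely what the lemma excludes. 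Indeed, the whole point of Lemma \ref{exclem} in the paper's architecture is to salvage the conclusion of Lemma \ref{coslem} for those exceptional components where it does apply, while the codimension-$1$ components are handled separately by the tangent-space computations that follow.

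The paper's proof avoids this obstruction by working locally rather than globally. It fixes a single $\Sigma$ with $\codim_{\widetilde V}\pi^{-1}(\Sigma)>1$ and argues by contradiction at a point $\Lambda\in\Sigma$ with positive-dimensional fiber: if $\Lambda$ were a smooth point of $V$, then in a neighborhood both $\widetilde V$ and $V$ are smooth and $\pi$ is a local isomorphism exactly where its Jacobian determinant is nonzero. The Jacobian vanishing locus is cut out by one equation, hence is a divisor; since near $\pi^{-1}(\Lambda)$ the non-isomorphism locus lies in $\pi^{-1}(\Sigma)$, which has codimension $>1$, the Jacobian cannot vanish there. But $\pi$ is visibly not a local isomorphism at points of the positive-dimensional fiber $\pi^{-1}(\Lambda)$, a contradiction. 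This Jacobian argument is essentially the proof of Lemma \ref{coslem} localized to one exceptional component, and it goes through regardless of what the other exceptional components look like.
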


\begin{proof}
Suppose $\codim(\pi^{-1}(\Sigma))>1$ and $\Lambda \in \Sigma$ is a point such that $\pi^{-1}(\Lambda)$ is positive dimensional. If $\Lambda$ is smooth, then in order to check that $\pi$ is a local isomorphism, it suffices to check that the Jacobian does not vanish. Since $\codim(\pi^{-1}(\Sigma))>1$ and the vanishing locus of the Jacobian is a divisor, we conclude that the Jacobian does not vanish. On the other hand, since $\pi$ is not a local isomorphism around $\pi^{-1}(\Lambda)$, we conclude that $\Lambda$ is a singular point.
\end{proof}

\begin{cor} \label{codim1}
Let $V(L_\bullet, Q_\bullet)$ be a restriction variety and $\pi : \widetilde{V}(L_\bullet, Q_\bullet) \to V(L_\bullet, Q_\bullet)$ the resolution of singularities in Theorem \ref{resthm}. The components of the exceptional locus whose images are of the form
\begin{itemize}
\item $\Sigma_{r_{b_h}}$ with $r_{b_h} < n_s$
\item $\Sigma_{n_{a_g}}$ for all $1 \leq g \leq t-1$
\item $\Sigma_{n_s}$ with $d_{k-s} + x_{k-s} -s -n_s  > 2$
\item $\Sigma_{d_{b_h}}$ for all $1 \leq h \leq u-1$
\end{itemize}
are in the singular locus of $V(L_\bullet, Q_\bullet)$. 
\end{cor}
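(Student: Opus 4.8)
The plan is to combine Observation \ref{codimcomp} with Lemma \ref{exclem}. The main point is that Lemma \ref{exclem} says precisely that any subvariety $\Sigma \subseteq V$ with $\codim(\pi^{-1}(\Sigma)) > 1$ lies in the singular locus of $V$, so it suffices to check that each of the four listed types of $\Sigma$ satisfies this codimension inequality. But this is exactly what the case analysis in I--III (compiled into Observation \ref{codimcomp}) establishes: for $\Sigma_{r_{b_h}}$ with $r_{b_h} < n_s$, the relevant estimate is \textbf{I.B} (when $r_{b_h} \neq n_j$ for any $j$) and \textbf{I.C} (when $r_{b_h} = n_j$ for some $n_j < n_s$), both of which give $\codim(\pi^{-1}(\Sigma_{r_{b_h}})) \geq 2$; for $\Sigma_{n_{a_g}}$ with $g \leq t-1$ the estimate is \textbf{II.B}, again $\geq 2$ (after noting that the subcases $r_{b_h} = n_{a_g}$ and $r_{b_h} > n_{a_g}$ reduce to a $\Sigma_{r_{b_h}}$ already handled, or are contained in such); for $\Sigma_{d_{b_h}}$ with $h \leq u-1$ the estimate is \textbf{III}, which gives $\codim(\pi^{-1}(\Sigma_{d_{b_h}})) > 1$; and for $\Sigma_{n_s}$ the formula $\codim(\pi^{-1}(\Sigma_{n_s})) = d_{k-s} + x_{k-s} - s - n_s - 1$ from \textbf{II.A} is $> 1$ exactly when $d_{k-s} + x_{k-s} - s - n_s > 2$, which is the hypothesis imposed in the third bullet.

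Concretely, I would organize the proof as follows. First, invoke Lemma \ref{exclem}: it reduces the statement to the single inequality $\codim(\pi^{-1}(\Sigma)) > 1$ for each $\Sigma$ in the list. Second, recall from the discussion preceding Observation \ref{codimcomp} that each such $\Sigma$ is itself (the closure of the locus cut out by) a restriction variety, so that $\pi^{-1}(\Sigma)$ is irreducible and $\codim(\pi^{-1}(\Sigma)) = \codim(\Sigma) - \dim(\pi^{-1}(\Lambda))$ for $\Lambda \in \Sigma$ general — this is the uniform mechanism behind all the case computations. Third, go through the four bullets one at a time, in each case quoting the corresponding subcase computation (I.B and I.C; II.B; II.A; III) that was carried out in detail above and that yields the required strict inequality; for $\Sigma_{n_{a_g}}$ one should also remark that the reductions made at the start of II.B (to the case $n_{a_g} > r_{k-s}$) are legitimate because in the remaining situations $\Sigma_{n_{a_g}}$ either coincides with some $\Sigma_{r_{b_h}}$ with $r_{b_h} < n_s$ (already covered by the first bullet via I.C) or is contained in such a locus. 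Finally, conclude that in every case the hypothesis of Lemma \ref{exclem} is met, hence $\Sigma \subseteq V^{\mathrm{sing}}$.

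The only genuinely substantive content beyond bookkeeping is making sure the reductions are airtight: one must verify that every $\Sigma$ in the list is covered by exactly one of the computed subcases, and that the "contained in a previously treated $\Sigma$" reductions preserve the conclusion (which they do, since a subvariety of a singular locus need not be in the singular locus, so one has to reduce in the right direction — namely, show the smaller $\Sigma$ equals or is contained in a bigger one that is already known singular, using that the singular locus is closed). I expect this matching-of-cases step to be the main obstacle, though it is essentially routine given the exhaustive case split already performed; no new geometric input is needed, since the codimension estimates and the irreducibility of the fibers have all been established. The inequalities $n_{a_{g+1}} - n_{a_g} \geq a_{g+1} - a_g + 1$ and $n_{a_g} - \alpha_g \geq k - s + 1$ used in II.B, and the definition of $\beta_{h+1}$ used in III, are exactly the structural facts that force strict inequality, and I would simply cite them from the respective subcases rather than rederive them.
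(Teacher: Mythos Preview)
Your proposal is correct and matches the paper's approach exactly: the corollary follows immediately by feeding the codimension estimates of Observation \ref{codimcomp} (cases I.B, I.C, II.A, II.B, III) into Lemma \ref{exclem}, and the paper offers no argument beyond this juxtaposition. One small remark: your parenthetical worry that ``a subvariety of a singular locus need not be in the singular locus'' is misplaced --- the singular locus is just a closed subset of $V$, so if $\Sigma' \subseteq \Sigma \subseteq V^{\mathrm{sing}}$ then trivially $\Sigma' \subseteq V^{\mathrm{sing}}$, and the containment reductions go through without any directional care.
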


\bigskip

Our results so far give a partial description of the singular locus of a restriction variety, but are inconclusive about the remaining types of loci:
\begin{itemize}
\item $\Sigma_{r_{b_h}}$ with $r_{b_h} \geq n_s$ , and
\item $\Sigma_{n_{a_t}}$ with $d_{k-s} + x_{k-s} -s - n_s  = 2$ .
\end{itemize}
Studying the tangent space of a restriction variety at a point will allow us to observe these loci further in the following.

\bigskip


Now we study the tangent space of a restriction variety $V$ at a point $\Lambda$ starting with the one-step case. We refer the reader to \cite{lakshmibai1} for a different approach to tangent spaces to Schubert varieties, and to \cite{harris} for general information on tangent spaces to Grassmannians.
\smallskip

If $V=L_e$ is an isotropic linear space, the tangent space at a point $v$ can be identified with the quotient $L_e / \Lambda = L_e / <v>$.
\smallskip

If $V=Q^r_d$ is a quadric, the tangent space at a point $v$ can be obtained by evaluating the kernel of the Jacobian of the polynomial $F_{Q^r_d}$ at $v$, and taking the quotient with $v$. More concretely, $F_{Q^r_d}$ can be taken to be
\[ \sum_{i=r+1}^{r+m}x_iy_i \;\; \mbox{if} \;\; d-r=2m \quad \mbox{and} \quad x_{r+m+1}^2+\sum_{i=r+1}^{r+m}x_iy_i \;\; \mbox{if} \;\; d-r=2m+1 \; , \]
and the kernel of the Jacobian is of the form
\[ \kernel \begin{bmatrix}  & \cdots & 0 & y_{r+1} & x_{r+1} & \cdots & \end{bmatrix} \; , \]
where the last nonzero term is $x_{r+m}$ or $2x_{r+m+1}$ depending on the rank of $Q^r_d$. The quotient of the kernel with $\Lambda$ has dimension $d-2$ if evaluated at a smooth point $v$, but has dimension $d-1$ if evaluated at a singular point $v$. 
\smallskip

Now consider a general restriction variety $V$ defined by
\[ L_{n_1} \subseteq \ldots \subseteq L_{n_s} \subseteq Q^{r_{k-s}}_{d_{k-s}} \subseteq \ldots \subseteq Q^{r_1}_{d_1} \; , \]
and let $\Lambda=<v_1, \ldots, v_k>$ be a general point in $V$ with $v_j \in L_{n_j}$ for $1 \leq j \leq s$, and $v_{k-i+1} \in Q^{r_i}_{d_i}$ for $1 \leq i \leq k-s$. An arc $\Gamma(t)$ through $\Lambda$ contained in $V$ is obtained by moving $\Lambda$'s intersection with each step of the sequence inside that step. Explicitly, $\Gamma(t)=<\gamma_1(t), \ldots, \gamma_k(t)>$ where $\gamma_j$ is an arc through $v_j$ contained in $L_{n_j}$ for $1 \leq j \leq s$, and $\gamma_{k-i+1}$ is an arc through $v_{k-i+1}$ contained in $Q^{r_i}_{d_i}$ for $1\leq i \leq k-s$. Therefore the tangent space of $\Gamma$ can be studied by considering the tangent spaces of $\gamma$'s.
\smallskip

The tangent space of $\gamma_j$ for $1 \leq j \leq s$ is given by the quotient
\begin{equation*}
L_{n_j} / \Lambda \; = \; L_{n_j} / <v_1, \ldots, v_j> \; ,
\end{equation*}
which has dimension $n_j-j$.
\smallskip

The arc $\gamma_{k-i+1}$ for $1 \leq i \leq k-s$ lies in the orthogonal complement of $\Lambda \cap Q^{r_i}_{d_i}$, and is contained in $Q^{r_i}_{d_i}$. Let $Q^\prime_i$ be the sub-quadric obtained by specializing the hyperplane section of $Q^{r_i}_{d_i}$ until it is tangent to $Q$ along $\overline{Q^{r_i, sing}_{d_i}, \Lambda \cap Q^{r_i}_{d_i}}$. Note that 
\begin{equation*}
\dim (\overline{Q^{r_i, sing}_{d_i}, \Lambda \cap Q^{r_i}_{d_i}}) = r_i + k-i+1 -x_i \; , \quad \mbox{therefore} \quad Q^\prime_i= Q^{r_i + k-i+1 -x_i}_{d_i - (k-i+1 -x_i)} \; .
\end{equation*}
The tangent space can be identified with the quotient
\begin{equation*}
\kernel \Big[ JF_{Q^\prime_i} \Big]_{v_{k-i+1}} \Big/ \Lambda \; = \; \kernel \Big[ JF_{Q^\prime_i} \Big]_{v_{k-i+1}} \Big/ <v_1, \ldots, v_{k-i+1}> \; ,
\end{equation*}
which has dimension $d_i + x_i -2(k-i+1)$.
\smallskip

Note that for a general point $\Lambda$ in $V$, these dimensions are the expressions appearing in the formula for $\dim V$, hence unsurprisingly $\dim V = \dim T_{\Lambda}V$ at a general point. In other orbits this equality does not necessarily hold, and this is what we inspect for the two types of loci for which our previous results are inconclusive.

\begin{prop}
The loci of type $\Sigma_{r_{b_h}}$ with $r_{b_h} \geq n_s$ are in the singular locus of $V$.
\end{prop}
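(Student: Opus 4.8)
The plan is to compute the dimension of the tangent space $T_\Lambda V$ at a general point $\Lambda$ of $\Sigma_{r_{b_h}}$ and show it strictly exceeds $\dim V$, which forces $\Lambda$ to be a singular point of $V$; since $\Sigma_{r_{b_h}} \subseteq \Sigma_{r_{b_1}}$ whenever $r_{b_h} \geq n_s$ (as noted in I.A), it suffices to treat $\Sigma_{r_{b_1}}$. Recall that a general $\Lambda \in \Sigma_{r_{b_1}}$ satisfies $\dim(\Lambda \cap Q^{r_{b_1}, sing}_{d_{b_1}}) = x_{b_1} + 1$ rather than $x_{b_1}$, while all the other rank conditions of $V^0$ are unchanged. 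I would pick a basis $\Lambda = \langle v_1, \ldots, v_k\rangle$ adapted to the sequence exactly as in the general tangent-space discussion above, with the extra feature that one of the basis vectors of $\Lambda \cap Q^{r_{b_1}}_{d_{b_1}}$ now lies in $Q^{r_{b_1}, sing}_{d_{b_1}}$.

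First I would identify the tangent space to $V$ at $\Lambda$ as a subspace of $\bigoplus \mathrm{Hom}(\langle v_j\rangle, W/\Lambda)$, built from arcs $\gamma_j$ moving each $v_j$ inside the corresponding step of the sequence, precisely as set up in the paragraphs preceding the statement. The contributions from the isotropic linear spaces $L_{n_j}$ are unchanged, each giving $n_j - j$. The key change is in the contribution of the arc $\gamma_{k-b_1+1}$ through the distinguished vector $v_{k-b_1+1}$, which now lies in $Q^{r_{b_1}, sing}_{d_{b_1}}$. For this vector, the relevant specialized quadric $Q'_{b_1} = Q^{r_{b_1} + (k-b_1+1) - (x_{b_1}+1)}_{d_{b_1} - (k-b_1+1) + (x_{b_1}+1)}$ — obtained by specializing the hyperplane section until tangent along $\overline{Q^{r_{b_1}, sing}_{d_{b_1}}, \Lambda \cap Q^{r_{b_1}}_{d_{b_1}}}$ — is evaluated at a \emph{singular} point, so the Jacobian kernel has dimension one larger than it would at a smooth point: the relevant quotient has dimension $(d_{b_1} - (k-b_1+1) + (x_{b_1}+1)) - 1 = d_{b_1} + x_{b_1} - 2(k-b_1+1) + 1$, i.e. one more than the corresponding summand in $\dim V$. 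After accounting for the shift in the $x_i$'s and $\dim V$ of $\Sigma_{r_{b_1}}$ itself (which is computed in I.A to have codimension $d_{k-s} - r_{b_1} - \beta_1$ in $V$), I would tally the total and compare with $\dim V$.

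The main obstacle — and the step requiring genuine care — is the bookkeeping for the arcs $\gamma_{k-i+1}$ corresponding to the other sub-quadrics $Q^{r_i}_{d_i}$ in the group of $Q^{r_{b_1}}_{d_{b_1}}$, and for the vectors $v_{k-b_1+1}$ and below: one must verify that moving the distinguished singular vector does not impose additional constraints forcing the other tangent directions to drop (which would cancel the gain), and that the gains genuinely add up. Concretely, the arcs for vectors $v_{k-i+1}$ with $b_1 \le i < k-s$ see a $Q'_i$ whose corank/dimension shift leaves their contribution equal to $d_i + x_i - 2(k-i+1)$ with the \emph{new} $x_i$ (increased by one relative to $V$'s sequence), so these already match $\dim \Sigma_{r_{b_1}}$; the point is that $T_\Lambda V$ — not $T_\Lambda \Sigma_{r_{b_1}}$ — is larger, precisely because in $V$ the vector $v_{k-b_1+1}$ is allowed to move in a quadric that at this $\Lambda$ has become singular at that point. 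I would conclude $\dim T_\Lambda V \geq \dim V + 1$, hence $\Lambda \in V^{sing}$, and since this holds for a general $\Lambda$ of the irreducible variety $\Sigma_{r_{b_1}}$ and $V^{sing}$ is closed, $\Sigma_{r_{b_1}} \subseteq V^{sing}$, which gives the claim for all $\Sigma_{r_{b_h}}$ with $r_{b_h} \geq n_s$.
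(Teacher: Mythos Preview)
Your approach---compute $\dim T_\Lambda V$ at a general $\Lambda \in \Sigma_{r_{b_h}}$ using the arc-by-arc description of the tangent space---is exactly what the paper does. However, there is a genuine conceptual error in the mechanism you invoke, and it leads you to an unnecessarily convoluted argument.

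You attribute the jump in the tangent contribution to the fact that $Q'_{b_1}$ is ``evaluated at a singular point.'' But look at the construction of $Q'_i$: it is by definition the sub-quadric singular along $\overline{Q^{r_i,\,sing}_{d_i},\;\Lambda\cap Q^{r_i}_{d_i}}$, and this span contains $v_{k-i+1}$. So $v_{k-i+1}$ is \emph{always} a singular point of $Q'_i$, both for a generic $\Lambda\in V^0$ and for $\Lambda\in\Sigma_{r_{b_h}}$. The smooth-versus-singular distinction therefore cannot account for the gain. What actually changes is the quadric $Q'_\iota$ itself: since $\dim(\Lambda\cap Q^{r_\iota,\,sing}_{d_\iota})=x_\iota+1$, the span $\overline{Q^{r_\iota,\,sing}_{d_\iota},\;\Lambda\cap Q^{r_\iota}_{d_\iota}}$ has dimension one \emph{less} than in the generic case, so $Q'_\iota$ sits in an ambient space of dimension one \emph{larger} (and has corank one smaller). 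That is why the quotient $\kernel[JF_{Q'_\iota}]_{v_{k-\iota+1}}/\Lambda$ has dimension $d_\iota+x_\iota-2(k-\iota+1)+1$. (Incidentally, your intermediate arithmetic ``$(\,\cdot\,)-1$'' should be ``$(\,\cdot\,)-(k-b_1+1)$'', the dimension of $\langle v_1,\dots,v_{k-b_1+1}\rangle$; your final value is nonetheless correct.)

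Once the mechanism is identified correctly, there is no reason to single out one ``distinguished singular vector.'' Every one of the $\beta_h$ arcs corresponding to the sub-quadrics in the group of $Q^{r_{b_h}}_{d_{b_h}}$ sees its $Q'_\iota$ enlarged in the same way, and each contributes $+1$. The paper's proof simply records this uniformly and concludes $\dim T_\Lambda V=\dim V+\beta_h$. Your detour through $\dim\Sigma_{r_{b_1}}$ and the separate treatment of one arc versus the rest is unnecessary and obscures this clean count; the reduction to $h=1$ is likewise not needed.
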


\begin{proof}
Let $\Lambda$ be a general point in the locus of the form $\Sigma_{r_{b_h}}$ for some $r_{b_h} \geq n_s$. The only arcs affected by the increase of $\dim (\Lambda \cap Q^{r_{b_h}, sing}_{d_{b_h}})$ are the group of $\beta_h$ arcs $\gamma_\iota$ with $b_h - \beta_h +1 \leq \iota \leq b_h$. For each $\iota$, we have $\dim (\Lambda \cap Q^{r_\iota, sing}_{d_\iota})=x_\iota+1$, and
\begin{equation*}
\dim (\overline{Q^{r_\iota, sing}_{d_\iota}, \Lambda \cap Q^{r_\iota}_{d_\iota}}) = r_\iota + k-\iota+1 -(x_\iota +1) \; , \quad \mbox{therefore} \quad Q^\prime_\iota= Q^{r_\iota + k-\iota+1 -x_\iota-1}_{d_\iota - (k-\iota+1 -x_\iota) +1} \; .
\end{equation*}
Consequently, the tangent space $\kernel \Big[ JF_{Q^\prime_\iota} \Big]_{v_{k-\iota+1}} \Big/ \Lambda$ has dimension $d_\iota + x_\iota -2(k-\iota+1) +1$ for each $\iota$ in $b_h - \beta_h +1 \leq \iota \leq b_h$. Hence
\[  \dim T_\Lambda V = \dim V + \beta_h \; ,  \]
which shows that $\Lambda$ is in the singular locus of $V$.
\end{proof}

\bigskip

\begin{prop}
The loci of type $\Sigma_{n_s}$ with $d_{k-s} + x_{k-s} -s -n_s =2$ are in the smooth locus of $V$.
\end{prop}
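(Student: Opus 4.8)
The plan is to show that a general point $\Lambda$ of the restriction variety $\Sigma_{n_s}$ is a smooth point of $V$; since $V^{sing}$ is closed and $\Sigma_{n_s}$ is irreducible, this places $\Sigma_{n_s}$ outside every component of $V^{sing}$, i.e.\ in the smooth locus of $V$ (its lower-dimensional boundary strata being already accounted for by the other loci $\Sigma$). Note first that, in contrast to the previous cases, here $\codim(\pi^{-1}(\Sigma_{n_s})) = d_{k-s}+x_{k-s}-s-n_s-1 = 1$, so Lemma \ref{exclem} gives no information and one must argue directly via tangent spaces. I would begin by extracting the structural consequences of the hypothesis: combining $d_{k-s}+x_{k-s}-s-n_s=2$ with conditions (7), (8) and (9) of Definition \ref{admissible} forces $n_s>r_{k-s}$, hence by condition (5) the linear space $\overline{Q^{r_{k-s},sing}_{d_{k-s}}}$ is contained in $L_{n_s}$, and for a general $\Lambda\in\Sigma_{n_s}$ we have $\Lambda\cap L_{n_s}=\Lambda\cap Q^{r_{k-s}}_{d_{k-s}}$, of dimension $s+1$, while all other intersections of $\Lambda$ with the flag and with the singular loci are the expected ones. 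In particular the quadric $Q'_{k-s}$ governing the deformation of the deepest vector $v_{s+1}$ is the same as at a general point of $V^0$, and the hypothesis is equivalent to the numerical identity $d_{k-s}+x_{k-s}-2(s+1)=n_s-s$.

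Next I would compute $\dim T_\Lambda V$ following the arc description used in the preceding propositions. The deformations of $\Lambda\cap L_{n_j}$ for $j<s$, of $\Lambda\cap Q^{r_i}_{d_i}$ for $i<k-s$, and the singular-locus conditions all contribute exactly as at a general point of $V^0$ (by condition (5) the specialized quadrics $Q'_i$, $i<k-s$, are unchanged). The only change is at the interface between $L_{n_s}$ and $Q^{r_{k-s}}_{d_{k-s}}$: since now $v_{s+1}\in L_{n_s}$, the $v_s$-arc is no longer forced to remain in $L_{n_s}$, because the condition on a deformation $\Gamma(t)\in V$ is only $\dim(\Gamma(t)\cap L_{n_s})\ge s$ and the $s$ vectors $v_1,\dots,v_{s-1},v_{s+1}$ already supply this; thus $v_s$ may roam in the larger space governed by $Q'_{k-s}$, of dimension $d_{k-s}+x_{k-s}-2(s+1)$. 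By the identity above this equals $n_s-s$, so the two slots $v_s$ and $v_{s+1}$ together contribute $2(n_s-s)$, precisely as at a general point of $V^0$. Hence $\dim T_\Lambda V=\dim V$ and $\Lambda$ is a smooth point. Equivalently, one may set up explicit local coordinates on $V$ near $\Lambda$ — the line $\ell'=\Lambda'\cap L_{n_s}$ varying over $\mathbb{P}(\Lambda\cap L_{n_s})$, a moving vector constrained to lie in $Q^{r_{k-s}}_{d_{k-s}}$ and orthogonal to $\ell'$, together with the standard smooth families for the lower rows — write down the isotropy and quadric-membership equations, and check that their linear parts are independent; the count comes out the same.

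The main obstacle is exactly this last verification. The rank condition $\dim(\Lambda'\cap L_{n_s})\ge s$ is \emph{degenerate} at $\Lambda$, where the intersection already has dimension $s+1$: its naive linearization at $\Lambda$ is all of $T_\Lambda OG(k,n)$, so one cannot simply intersect the ``obvious'' linearized conditions, as that over-counts. The real work is to show that the deformation which simultaneously lowers $\dim(\Lambda'\cap L_{n_s})$ and pushes a vector into $Q^{r_{k-s}}_{d_{k-s}}$ does not enlarge $T_\Lambda V$ beyond $\dim V$; this is forced, but only barely, by the hypothesis, which is why the borderline value $2$ behaves differently from values $>2$ (there $\pi$ fails to be an isomorphism in codimension one and Lemma \ref{exclem} applies). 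Geometrically this is consistent with the fact that over a neighbourhood of the general point of $\Sigma_{n_s}$ the map $\pi$ has fiber $\mathbb{P}^{\alpha_t}$ over each point while $\Sigma_{n_s}$ has codimension $\alpha_t+1$ in $V$, so that $\pi$ behaves there like the blow-down of a smooth variety along a smooth centre.
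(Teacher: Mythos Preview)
Your proposal is correct and follows essentially the same approach as the paper: compute $\dim T_\Lambda V$ at a general point $\Lambda\in\Sigma_{n_s}$ via the arc description, observe that the $v_s$-direction is now governed by $Q'_{k-s}$ rather than $L_{n_s}$, and use the numerical identity $d_{k-s}+x_{k-s}-2(s+1)=n_s-s$ to conclude that the tangent space has the expected dimension. Your write-up is more careful than the paper's about the structural consequences of the hypothesis (e.g.\ $n_s>r_{k-s}$) and about why the degenerate rank condition does not inflate the tangent space, but the core computation is identical.
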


\begin{proof}
Let $\Lambda$ be a general point in the locus of type $\Sigma_{n_s}$. As a result of $\dim (\Lambda \cap L_{n_s}) =s+1$, both arcs $\gamma_s$ and $\gamma_{s+1}$ are contained in $L_{n_s}$, and hence the tangent space of $\gamma_s$ can be identified with
\[  \kernel \Big[ JF_{Q^\prime_{k-s}} \Big]_{v_s} \Big/ \Lambda \; = \; \kernel \Big[ JF_{Q^\prime_{k-s}} \Big]_{v_s} \Big/ <v_1, \ldots, v_{s+1}>   \; ,  \]
the construction as the one for $\gamma_{s+1}$, but evaluated at $v_s$. We observe the difference in dimensions as
\begin{align*}
\dim T_\Lambda V - \dim V & = \big( d_{k-s} + x_{k-s} -2(s+1) \big) - \big(n_s - s \big) \\
& = d_{k-s} + x_{k-s} -s -n_s -2 \; .
\end{align*}
Hence follows the result.
\end{proof}

\bigskip

In particular, the image of the exceptional locus is not equal to the singular locus in general. The following corollary summarizes the results of this chapter.

\bigskip

\begin{cor} \label{thecor}
Let $V(L_\bullet, Q_\bullet)$ be a restriction variety, $\pi$ the resolution of singularities, $E_\pi$ the exceptional locus of $\pi$, and $\Sigma_\bullet$ the components of $\pi(E_\pi)$ as above. The singular locus of $V$ is the union of
\begin{itemize}
\item $\Sigma_{r_{b_h}}$
\item $\Sigma_{n_{a_g}}$ for all $1 \leq g \leq t-1$
\item $\Sigma_{n_s}$ with $d_{k-s} + x_{k-s} -s -n_s  > 2$
\item $\Sigma_{d_{b_h}}$ for all $1 \leq h \leq u-1$ .
\end{itemize}

\medskip

Equivalently,
\begin{equation*}
V^{sing}=
\begin{cases}
\pi (E_\pi) & \text{if} \quad d_{k-s} + x_{k-s} -s -n_s  > 2 \\
\pi (E_\pi) \setminus \Sigma_{n_s} & \text{if} \quad d_{k-s} + x_{k-s} -s -n_s  = 2 \; .
\end{cases}
\end{equation*}
\end{cor}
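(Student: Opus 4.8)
The statement is a summary of the results obtained in this section, so the proof is mainly a matter of collecting them. The plan is: (i) record that $V^{sing}\subseteq\pi(E_\pi)$; (ii) recall that $\pi(E_\pi)$ is the union of the loci $\Sigma_{r_{b_h}},\Sigma_{n_{a_g}},\Sigma_{d_{b_h}}$ described at the start of the section; and (iii) go through these loci one family at a time and decide, using Lemma~\ref{exclem} and the two tangent-space propositions above, whether the general point is a singular point of $V$.

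For step (i): $\widetilde V(L_\bullet,Q_\bullet)$ is smooth by Proposition~\ref{prop2} and $\pi$ is proper and birational by Theorem~\ref{resthm}; over $V\setminus\pi(E_\pi)$ the fibers of $\pi$ are finite, so, $V$ being normal, $\pi$ restricts there to an isomorphism by Zariski's Main Theorem. Hence $V\setminus\pi(E_\pi)$ lies in the smooth locus of $V$, i.e.\ $V^{sing}\subseteq\pi(E_\pi)$. For step (ii) I would invoke the description of $\pi(E_\pi)$ given at the beginning of the section (types I, II, III), together with Remark~\ref{contained} for the degenerate situation in which $\Sigma_{n_s}$ is replaced by the locus $\dim(\Lambda\cap L_{n_s})=s+2$ coming from the two components of $OG(m,2m)$.

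For step (iii): whenever $\codim(\pi^{-1}(\Sigma))>1$, Lemma~\ref{exclem} puts $\Sigma$ in $V^{sing}$, and by Observation~\ref{codimcomp} this covers $\Sigma_{r_{b_h}}$ with $r_{b_h}<n_s$, $\Sigma_{n_{a_g}}$ with $1\le g\le t-1$, $\Sigma_{d_{b_h}}$ with $1\le h\le u-1$, and $\Sigma_{n_s}$ when $d_{k-s}+x_{k-s}-s-n_s>2$ (there the inverse image has codimension $d_{k-s}+x_{k-s}-s-n_s-1$). For the two families with $\codim(\pi^{-1}(\Sigma))=1$ the codimension count is inconclusive and one appeals to the tangent-space computation: the proposition on $\Sigma_{r_{b_h}}$ with $r_{b_h}\ge n_s$ gives $\dim T_\Lambda V=\dim V+\beta_h$ at a general point, so those loci are also singular, and therefore $\Sigma_{r_{b_h}}\subseteq V^{sing}$ for every $h$ with no restriction on $r_{b_h}$; while the proposition on $\Sigma_{n_s}$ with $d_{k-s}+x_{k-s}-s-n_s=2$ gives $\dim T_\Lambda V=\dim V$, so the general point of that locus is smooth. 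Collecting the loci that do lie in $V^{sing}$ yields the list in the statement, and the displayed case distinction follows at once, the two cases $d_{k-s}+x_{k-s}-s-n_s=2$ and $>2$ being exhaustive because $\Sigma_{n_s}$, being a proper subvariety, forces this quantity to be $\ge 2$.

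The only step that is more than bookkeeping is ensuring that the enumerated $\Sigma$'s genuinely exhaust $\pi(E_\pi)$: this uses the containments among the $\Sigma_{r_{b_h}}$ (and among the $\Sigma_{n_{a_g}}$ and $\Sigma_{d_{b_h}}$) recorded in cases I--III to reduce to their extremal members, together with Remark~\ref{contained} for the $OG(m,2m)$ phenomenon; and, in the boundary case $d_{k-s}+x_{k-s}-s-n_s=2$, one must check that $\Sigma_{n_s}$ is a component of $\pi(E_\pi)$ whose remaining (smaller) orbits already lie in the other $\Sigma$'s, so that $\pi(E_\pi)\setminus\Sigma_{n_s}$ is closed and equal to $V^{sing}$.
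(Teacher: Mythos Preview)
Your proposal is correct and mirrors the paper's own treatment: the corollary is presented there simply as a summary of the preceding results (Observation~\ref{codimcomp}, Lemma~\ref{exclem}, Corollary~\ref{codim1}, and the two tangent-space propositions), and your steps (i)--(iii) organize exactly that collection. The only minor remark is that your invocation of normality of $V$ in step~(i) is not something the paper establishes; in fact one only needs that $V^0=V\setminus\pi(E_\pi)$ is smooth (hence normal), which is what the paper uses when it applies Zariski's Main Theorem there.
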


\bigskip


\section{The Algorithm and Examples}

We present an algorithm for finding the singular locus of a restriction variety that is based on our study of the exceptional locus of $\pi$. The three rules introduced before will be used in the algorithm, we repeat them here for convenience.

\begin{customthm}{\ref{rule1}}
Given the defining sequence of a restriction variety, consider the modified sequence where an isotropic linear space $L_{n_j}$ is replaced with a smaller dimensional isotropic linear space. If there are sub-quadrics $Q^{r_i}_{d_i}$ in the sequence satisfying $r_i < n_j$, then let $r_{i_0}:= \max \{r_i < n_j \}$, and replace $Q^{r_{i_0}}_{d_{i_0}}$ with $Q^{n_j}_{d_{i_0} - (n_j - r_{i_0})}$.
\end{customthm}

\begin{customthm}{\ref{rule2}}
Given the defining sequence of a restriction variety, consider the modified sequene where a sub-quadric is replaced with an isotropic linear space. If $n_j - r_i =1$ for an isotropic linear space $L_{n_j}$, and a sub-quadric $Q^{r_i}_{d_i}$ in the modified sequence, then let $n_{a_{g_0}}:= \min \{ n_{a_g} \geq n_j \}$, and replace $Q^{r_i}_{d_i}$ with $Q^{n_{a_{g_0}}}_{d_i - (n_{a_{g_0}} - r_i)}$.
\end{customthm}

\begin{customthm}{\ref{rule3}}
Given the defining sequence of a restriction variety, consider the modified sequence where a sub-quadric is replaced with another sub-quadric. If $n_j -r_i =1$ for an isotropic linear space $L_{n_j}$ and a sub-quadric $Q^{r_i}_{d_i}$, then let $r_{b_{h_0}} := \max \{ r_{b_h} \leq r_i \}$, and replace $L_{n_j}$ with $L_{r_{b_{h_0}}}$.
\end{customthm}

\smallskip

\begin{alg} \label{alg}

Let $V$ be defined by the sequence
\[ L_{n_1} \subseteq \ldots \subseteq L_{n_s} \subseteq Q^{r_{k-s}}_{d_{k-s}} \subseteq \ldots \subseteq Q^{r_1}_{d_1} \; , \]
or equivalently, by the partitions
\[  (n_{a_1}^{\alpha_1}, \ldots, n_{a_t}^{\alpha_t}), (d_{b_1}^{\beta_1}, \ldots, d_{b_u}^{\beta_u}), (r_1, \ldots, r_{k-s}) . \]

\begin{enumerate}

\item
Steps for $r_{b_1} \geq n_s$. If $r_{b_1}>x_{b_1}$ then proceed, otherwise $\Sigma_{r_{b_1}} = \varnothing$.
\begin{enumerate}
\item If $r_{b_1}>n_s$ then replace $Q^{r_{k-s}}_{d_{k-s}}$ with $L_{r_{b_1}}$. The resulting sequence gives $\Sigma_{r_{b_1}}$.
\item If $r_{b_1}=n_s$ then replace $Q^{r_{k-s}}_{d_{k-s}}$ with $L_{n_s}$, and replace $L_\tau$, where $n_s - \alpha_t +1 \leq \tau \leq n_s$, with $L_{\tau-1}$. Apply Rule 2. The resulting sequence gives $\Sigma_{r_{b_1}}$.
\item Otherwise $\Sigma_{r_{b_1}} = \varnothing$.
\end{enumerate}

\item
Steps for each $r_{b_h} <n_s$, where $1 \leq h \leq u$. For each $h$, if $r_{b_h}>x_{b_h}$ then proceed, otherwise $\Sigma_{r_{b_h}}=\varnothing$.
\begin{enumerate}
\item If $r_{b_h}<n_s$ and $r_{b_h} \neq n_j$ for any $j$, then let $n_{j_\sharp} = \min \{ n_j \; | \; r_{b_h} < n_j \}$, let $r_{b_\flat} = \max \{ r_{b_h} \; | \; r_{b_h} < n_{j_\sharp} \}$, and replace $L_{n_{j_\sharp}}$ with $L_{r_{b_\flat}}$. Apply Rule 1. The resulting sequence is $\Sigma_{r_{b_h}}$.
\item If $r_{b_h}=n_{a_g} <n_s$ then replace $L_{n_{(a_g+1)}}$ with $L_{n_{a_g}}$, and replace $L_\tau$, where $n_{a_g} -\alpha_g+1 \leq \tau \leq n_{a_g}$, with $L_{\tau -1}$. Apply Rule 1. The resulting sequence is $\Sigma_{r_{b_h}}$.
\item Otherwise $\Sigma_{r_{b_h}} = \varnothing$.
\end{enumerate}

\item Steps for $n_s$. If $n_s >s$ and $d_{b_1}+x_{b_1}-s-n_s >2$ then proceed, otherwise $\Sigma_{n_s}=\varnothing$.
\begin{enumerate}
\item If $n_s > r_{b_1}$ and the proposition $ \big[ \; b_1 \; \mbox{is a special index} \; \big] \wedge \big[ \; 2n_s= d_{b_1}+r_{b_1} \; \big]$ is false, then replace $Q^{r_{k-s}}_{d_{k-s}}$ with $L_{n_s}$, and replace $L_\tau$, where $n_s -\alpha_t+1 \leq \tau \leq n_s$, with $L_{\tau -1}$. Apply Rule 2. The resulting sequence gives $\Sigma_{n_s}$.
\item If $b_1$ is a special index and $2n_s=d_{b_1}+r_{b_1}$ and $k \geq s+2$, then replace $Q^{r_{k-s-1}}_{d_{k-s-1}}$ with $L_{n_s}$, replace $Q^{r_{k-s}}_{d_{k-s}}$ with $L_{n_s-1}$, and replace $L_\tau$, where $n_s -\alpha_t+2 \leq \tau \leq n_s$, with $L_{\tau -2}$. Apply Rule 2. The resulting sequence gives $\Sigma_{n_s}$.
\item Otherwise $\Sigma_{n_s}=\varnothing$.
\end{enumerate}

\item Steps for each $n_{a_g}$, where $1 \leq g \leq t-1$.
\begin{enumerate}
\item If $n_{a_g} > a_g$ and $r_{b_1} <n_{a_g} < n_s$, then replace $L_{n_{(a_g+1)}}$ with $L_{n_{a_g}}$, and replace $L_\tau$, where $n_{a_g}-\alpha_t+1 \leq \tau \leq n_{a_g}$, with $L_{\tau -1}$. Apply Rule 2. The resulting sequence gives $\Sigma_{n_{a_g}}$.
\item Otherwise $\Sigma_{n_{a_g}}=\varnothing$.
\end{enumerate}

\item Steps for each $d_{b_h}$ where $1 \leq h \leq u-1$.
\begin{enumerate}
\item If $d_{b_h} - r_{b_h} -2\beta_h \geq 3$ then replace $Q^{r_{b_{h-1}}+\beta_{h-1}-1}_{d_{b_{h-1}}-\beta_{h-1}+1}$ with $Q^{r_{b_h}}_{d_{b_h}}$, and replace $Q^\rho_\delta$, where $r_{b_h}+\beta_h-1 \geq \rho \geq r_{b_h}$ and $d_{b_h}-\beta_h+1 \leq \delta \leq d_{b_h}$ with $Q^{\rho+1}_{\delta-1}$. Apply Rule 3. The resulting sequence gives $\Sigma_{d_{b_h}}$.
\item Otherwise $\Sigma_{d_{b_h}}=\varnothing$.
\end{enumerate}

\item Take the union of the restriction varieties obtained from the first five steps. The resulting restriction variety gives the singular locus of $V$.

\end{enumerate}
\end{alg}

\smallskip

Here are some examples illustrating Algorithm \ref{alg} in a few different cases. We refer the reader to \cite{billeylakshmibai} for the permutation notation, and for more examples on singularities of Schubert varieties.
\smallskip

\begin{ex}
Let $V$ be defined by the sequence $\Big[ Q^3_6 \subseteq Q^0_9 \Big]$. This is a Schubert variety in $OG(2, 9)$. The locus $\Sigma_{r_{b_1}} = \Big[ L_3 \subseteq Q^0_9 \Big]$ is obtained by step (1)(a) in Algorithm \ref{alg}. Note that the locus $\Sigma_{d_{b_1}}$ does not exist since $d_{b_1}-r_{b_1}-2\beta_1 \ngeq 3$. Therefore
\[ V^{sing} = \Big[ L_3 \subseteq Q^0_9 \Big] \; . \]
Equivalently, in permutation notation we have
\[ (968753241)^{sing} = (938654271) \; . \]
\end{ex}
\smallskip

\begin{ex}
Let $V$ be the Schubert variety in $OG(2, 9)$ defined by $\Big[ Q^2_7 \subseteq Q^0_9 \Big]$. Using the steps (1)(a) and (5)(a), we have
\[ V^{sing} = \Big[ L_2 \subseteq Q^0_9 \Big] \cup \Big[ Q^3_6 \subseteq Q^2_7 \Big] \; , \]
equivalently, in permutation notation
\[ (978654231)^{sing} = (9276543811) \cup (769852143) \; .\]
\end{ex}
\smallskip

\begin{ex}
Let $V$ be the Schubert variety in $OG(3, 9)$ defined by $\Big[ L_3 \subseteq Q^3_6 \subseteq Q^0_9 \Big]$. Using the step (1)(b), we have
\[ V^{sing} = \Big[ L_2 \subseteq L_3 \subseteq Q^0_9 \Big] \; , \]
equivalently, in permutation notation
\[ (963852741)^{sing} = (932654871) \; . \]
\end{ex}
\smallskip

\begin{ex}
Let $V$ be the Schubert variety in $OG(2, 8)$ defined by $\Big[ L_3 \subseteq Q^1_7 \Big]$. The locus $\Sigma_{r_{b_1}}$ is obtained by step (2)(a). Rule 1 replaces $Q^1_7$ with the union of $L_4$ and $L_4^\prime$. Therefore the locus $\Sigma_{r_{b_1}}$ is the union of $\Big[ L_1 \subseteq L_4 \Big]$ and $\Big[ L_1 \subseteq L_4^\prime \Big]$. Furthermore, step (3)(a) gives the locus $\Big[ L_2 \subseteq L_3 \Big]$. Hence
\[ V^{sing} = \Big[ L_2 \subseteq L_3 \Big] \cup \Big[ L_1 \subseteq L_4 \Big] \cup \Big[ L_1 \subseteq L_4^\prime \Big] \; , \]
equivalently, in permutation notation
\[ (73845162)^{sing} = (32854176) \cup (41763285) \cup (51736284) \; . \]
\end{ex}
\smallskip

\begin{ex}
Let $V$ be the Schubert variety in $OG(3, 9)$ defined by $\Big[ L_2 \subseteq L_4 \subseteq Q^2_7 \Big]$. The locus $\Sigma_{r_{b_1}} = \Big[ L_1 \subseteq L_2 \subseteq L_4 \Big]$ is obtained by applying step (4)(a) and in particular Rule 1. Note that $\Sigma_{n_{a_2}}$ is in the smooth locus of $V$ since $d_{b_1}+x_{b_1}-s-n_s=2$. We have
\[ V^{sing} = \Big[ L_1 \subseteq L_2 \subseteq L_4 \Big] \; , \]
equivalently, in permutation notation
\[ (742951863)^{sing} = (421753986) \; . \]
\end{ex}
\smallskip

\begin{ex}
Let $V$ be the Schubert variety in $OG(3, 9)$ defined by $\Big[ L_3 \subseteq Q^1_7 \subseteq Q^0_9 \Big]$. The locus $\Sigma_{n_{a_1}} = \Big[ L_2 \subseteq L_3 \subseteq Q^0_9 \Big]$ is obtained by step (3)(a). We have
\[ V^{sing} = \Big[ L_2 \subseteq L_3 \subseteq Q^0_9 \Big] \; , \]
equivalently, in permutation notation
\[ (983654721)^{sing} = (932654871) \; . \]
\end{ex}
\smallskip

\begin{ex}
Let $V$ be the Schubert variety in $OG(3, 9)$ defined by $\Big[ L_4 \subseteq Q^2_7 \subseteq Q^1_8 \Big]$. The locus $\Sigma_{r_{b_1}} = \Big[ L_1 \subseteq L_4 \subseteq Q^1_8 \Big]$ is obtained by applying step (2)(a). Furhermore, step (3)(a) is applied to obtain the locus $\Sigma_{n_{a_1}} = \Big[ L_2 \subseteq L_3 \subseteq L_4 \Big]$. Thus
\[ V^{sing} = \Big[ L_1 \subseteq L_4 \subseteq Q^1_8 \Big] \cup \Big[ L_2 \subseteq L_3 \subseteq L_4 \Big] \; , \]
equivalently, in permutation notation
\[ (874951632)^{sing} = (841753962) \cup (432951876) \; . \]
\end{ex}
\smallskip

\begin{ex}
Let $V$ be the Schubert variety in $OG(3, 9)$ defined by $\Big[ L_2 \subseteq L_4 \subseteq Q^0_9 \Big]$. By steps (4)(a) and (3)(a), we have
\[ V^{sing} = \Big[ L_1 \subseteq L_2 \subseteq Q^2_7 \Big] \cup \Big[ L_2 \subseteq L_3 \subseteq L_4 \Big] \; , \]
equivalently, in permutation notation
\[ (942753861)^{sing} = (721654983) \cup (432951876) \; . \]
\end{ex}
\smallskip

\begin{ex}
Let $V$ be the Schubert variety in $OG(4, 9)$ defined by $\Big[ L_2 \subseteq L_4 \subseteq Q^2_7 \subseteq Q^0_9 \Big]$. Step (2)(b) is applied to obtain the locus $\Sigma_{d_{b_1}} = \Big[ L_1 \subseteq L_2 \subseteq Q^3_6 \subseteq Q^2_7 \Big]$. Note that $\Sigma_{n_{a_1}}$ is contained in $\Sigma_{d_{b_1}}$, and $\Sigma_{n_{a_2}}$ is contained in the smooth locus of $V$ since $d_{b_1}+x_{b_1}-s-n_s=2$. Hence
\[ V^{sing} = \Big[ L_1 \subseteq L_2 \subseteq Q^3_6 \subseteq Q^2_7 \Big] \; , \]
equivalently, in permutation notation
\[ (974258631)^{sing} = (762159843) \; . \]
\end{ex}
\smallskip

\begin{ex}
Let $V$ be the restriction variety in $OG(6, 21)$ defined by the sequence
\[ \Big[ L_3 \subseteq L_8 \subseteq L_9 \subseteq Q^6_{12} \subseteq Q^5_{13} \subseteq Q^1_{20} \Big] \; . \]
The loci $\Sigma_{r_{b_1}}$ and $\Sigma_{r_{b_2}}$ are obtained by applying step (2)(a). When applied to $Q^1_{20}$, we have
\[ \Sigma_{r_{b_2}} = \Big[ L_1 \subseteq L_8 \subseteq L_9 \subseteq Q^6_{12} \subseteq Q^5_{13} \subseteq Q^3_{18} \Big] \; , \]
and when applied to $Q^5_{13}$, the sub-quadric $Q^6_{12}$ is replaced with the isotropic linear subspaces $L_9$ and $L_9^\prime$. Thus
\[ \Sigma_{r_{b_1}} = \Big[ L_3 \subseteq L_5 \subseteq L_8 \subseteq L_9 \subseteq Q^5_{13} \subseteq Q^1_{20} \Big]  \cup \Big[ L_3 \subseteq L_5 \subseteq L_8 \subseteq L_9^\prime \subseteq Q^5_{13} \subseteq Q^1_{20} \Big] \; . \]
Applying step (4)(a) gives the locus
\[ \Sigma_{n_{a_1}} = \Big[ L_2 \subseteq L_3 \subseteq L_9 \subseteq Q^6_{12} \subseteq Q^5_{13} \subseteq Q^1_{20} \Big] \; . \]
Since $b_1$ is a special index and $2n_s = d_{b_1} + r_{b_1}$, step (3)(b) is applied to obtain the locus
\[ \Sigma_{n_{a_2}} = \Big[ L_3 \subseteq L_6 \subseteq L_7 \subseteq L_8 \subseteq L_9 \subseteq Q^1_{20} \Big] \; . \]
As a result, we have
\[ V^{sing} = \Sigma_{n_{a_1}} \cup \Sigma_{n_{a_2}} \cup \Sigma_{r_{b_1}} \cup \Sigma_{r_{b_2}} \; . \]
\end{ex}

\bigskip

\bibliography{singrest5}
\bibliographystyle{spmpsci}

\end{document}